\documentclass[12pt]{article}
\usepackage{amssymb,amsfonts,epic
} 
\textwidth=6.2in
\textheight=7.9in
\topmargin=0mm
\oddsidemargin=0cm
\evensidemargin=0cm
\headheight=0mm

\begin{document}

\newtheorem{lem}{Lemma}[section]
\newtheorem{prop}{Proposition}
\newtheorem{con}{Construction}[section]
\newtheorem{defi}{Definition}[section]
\newtheorem{coro}{Corollary}[section]
\newcommand{\hf}{\hat{f}}
\newtheorem{fact}{Fact}[section]
\newtheorem{theo}{Theorem}
\newcommand{\Br}{\Poin}
\newcommand{\Cr}{{\bf Cr}}
\newcommand{\dist}{{\bf dist}}
\newcommand{\diam}{\mbox{diam}\, }
\newcommand{\mod}{{\rm mod}\,}
\newcommand{\compose}{\circ}
\newcommand{\dbar}{\bar{\partial}}
\newcommand{\Def}[1]{{{\em #1}}}
\newcommand{\dx}[1]{\frac{\partial #1}{\partial x}}
\newcommand{\dy}[1]{\frac{\partial #1}{\partial y}}
\newcommand{\Res}[2]{{#1}\raisebox{-.4ex}{$\left|\,_{#2}\right.$}}
\newcommand{\sgn}{{\rm sgn}}

\newcommand{\C}{{\bf C}}
\newcommand{\D}{{\bf D}}
\newcommand{\Dm}{{\bf D_-}}
\newcommand{\N}{{\bf N}}
\newcommand{\R}{{\bf R}}
\newcommand{\Z}{{\bf Z}}
\newcommand{\tr}{\mbox{Tr}\,}

\newenvironment{nproof}[1]{\trivlist\item[\hskip \labelsep{\bf Proof{#1}.}]}
{\begin{flushright} $\square$\end{flushright}\endtrivlist}
\newenvironment{proof}{\begin{nproof}{}}{\end{nproof}}

\newenvironment{block}[1]{\trivlist\item[\hskip \labelsep{{#1}.}]}{\endtrivlist}
\newenvironment{definition}{\begin{block}{\bf Definition}}{\end{block}}

\newtheorem{conjec}{Conjecture}
\newtheorem{com}{Comment}
\newtheorem{exa}{Example}
\font\mathfonta=msam10 at 11pt
\font\mathfontb=msbm10 at 11pt
\def\Bbb#1{\mbox{\mathfontb #1}}
\def\lesssim{\mbox{\mathfonta.}}
\def\suppset{\mbox{\mathfonta{c}}}
\def\subbset{\mbox{\mathfonta{b}}}
\def\grtsim{\mbox{\mathfonta\&}}
\def\gtrsim{\mbox{\mathfonta\&}}

\newcommand{\ar}{{\bf area}}
\newcommand{\1}{{\bf 1}}
\newcommand{\Bo}{\Box^{n}_{i}}
\newcommand{\Di}{{\cal D}}
\newcommand{\gd}{{\underline \gamma}}
\newcommand{\gu}{{\underline g }}
\newcommand{\ce}{\mbox{III}}
\newcommand{\be}{\mbox{II}}
\newcommand{\F}{\cal{F_{\eta}}}
\newcommand{\Ci}{\bf{C}}
\newcommand{\ai}{\mbox{I}}
\newcommand{\dupap}{\partial^{+}}
\newcommand{\dm}{\partial^{-}}
\newenvironment{note}{\begin{sc}{\bf Note}}{\end{sc}}
\newenvironment{notes}{\begin{sc}{\bf Notes}\ \par\begin{enumerate}}%
{\end{enumerate}\end{sc}}
\newenvironment{sol}
{{\bf Solution:}\newline}{\begin{flushright}
{\bf QED}\end{flushright}}

\title{Perturbations of weakly expanding critical orbits}
\date{}
\author{Genadi Levin
\thanks{Supported in part by ISF grant 799/08}\\
\small{Institute of Mathematics, the Hebrew University of Jerusalem, Israel}\\
} 
\normalsize 
\maketitle 
\abstract{Let $f$ be a polynomial or a rational function which
has $r$ summable critical points. We prove that there exists
an $r$-dimensional manifold $\Lambda$ in an appropriate space containing $f$
such that for every smooth curve in $\Lambda$ through $f$,
the ratio between parameter and dynamical derivatives along 
forward iterates of at least
one of these summable points tends to a non-zero number.} \

\section{Introduction}
We say that a critical point $c$ of a rational function $f$ is 
{\it weakly expanding, or summable}, if, for the point
$v=f(c)$ of the Riemann
sphere ${\bf \bar C}$,
\begin{equation}\label{summ}
\sum_{n=0}^\infty \frac{1+|f^n(v)|^2}{1+|v|^2}
\frac{1}{|(f^n)'(v)|}<\infty.
\end{equation}
Throughout the paper, derivatives are standard derivatives of 
holomorphic maps; then the summand in~(\ref{summ}) is
a finite number for every $v\in {\bf \bar C}$ as soon as
$v$ is not a critical point of $f^n$.

In the present paper, we study perturbations of polynomials and
rational functions with several (possibly, not all)
summable critical points. 
The paper
is a natural continuation of~\cite{Le},~\cite{mult}, and partly~\cite{lpade},
~\cite{LSY}.  
Let us state main result for rational functions
(for a more complete account, see Theorem~\ref{onerat}).
We call a rational function {\it exceptional}, if it is 
{\it double covered by an integral torus
endomorphism} (another name: {\it a flexible Lattes map}): 
these form a family of explicitly described 
critically finite rational maps with Julia sets the Riemann sphere,
see e.g.~\cite{dht},~\cite{mcm},~\cite{mill}.
Two rational functions are {\it equivalent}, if they
are conjugated by a Mobius transformation.
\begin{theo}\label{tworatintro}
Let $f$ be an arbitrary non-exceptional rational function of degree $d\ge 2$.
Suppose that $c_1,...,c_r$ is a collection 
of $r$ summable critical points of $f$,
and the union $K=\cup_{j=1}^r \omega(c_j)$ of their $\omega$-limit sets 
is a $C$-compact on the Riemann sphere
(see Definition~\ref{ccomdef}). For example, it
is enough that
$K$ has zero Lebesgue measure on the plane. 
Replacing if necessary $f$ by its
equivalent, 
one can assume the forward orbits
of $c_1,...,c_r$ avoid infinity.
Consider the set $X_f$ of all rational functions of degree $d$
which are close enough to $f$ and have the same number $p'$
of different critical points with the same corresponding multiplicities.
Then there is a $p'$-dimensional manifold $\Lambda_f$ and its
$r$-dimensional submanifold $\Lambda$, $f\in \Lambda\subset \Lambda_f
\subset X_f$, with the following properties: 

(a) every $g\in X_f$ is equivalent
to some $\hat g\in \Lambda_f$,

(b) for every one-dimensional family $f_t\in \Lambda$ through $f$, 
such that $f_t(z)=f(z)+t u(z)+O(|t|^2)$ as $t\to 0$, 
if $u\not=0$, then,
for some $1\le j\le r$, the limit
\begin{equation}\label{familyratintro}
\lim_{m\to \infty}\frac{\frac{d}{dt}|_{t=0}f_t^m (c_j(t))}
{(f^{m-1})'(v_j)}=\sum_{n=0}^\infty \frac{u(f^n(c_j))}{(f^{n})'(f(c_j))}
\end{equation}
exists and is a {\it non-zero} number.
Here $c_j(t)$ is the critical point of $f_t$, such that $c_j(0)=c_j$,
and $v_j=f(c_j)$.
Furthermore, if $f$ and all the critical points of $f$ are real, the above
maps and spaces can be taken real.
\end{theo}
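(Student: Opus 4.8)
My plan is to split the statement into a direct variational computation, which yields the identity and the convergence in (\ref{familyratintro}), and a rigidity argument, which both dictates the construction of $\Lambda$ and forces the limit to be non-zero. I begin with the computation. Set $\xi_n=f^n(c_j)$ for the unperturbed orbit and $x_n(t)=f_t^n(c_j(t))$, so that $x_{n+1}(t)=f_t(x_n(t))$. Differentiating at $t=0$ and writing $a_n=\frac{d}{dt}\big|_{t=0}x_n(t)$ gives the linear recurrence
\[ a_{n+1}=u(\xi_n)+f'(\xi_n)\,a_n. \]
The decisive observation is that $\xi_0=c_j$ is a \emph{critical} point, so $f'(c_j)=0$ and hence $a_1=u(c_j)$ does not depend on the a priori unknown motion $a_0$ of the critical point itself. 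Unfolding the recurrence from $n=1$ and dividing by $(f^{m-1})'(v_j)=\prod_{k=1}^{m-1}f'(\xi_k)$ telescopes to
\[ \frac{a_m}{(f^{m-1})'(v_j)}=\sum_{n=0}^{m-1}\frac{u(f^n(c_j))}{(f^{n})'(f(c_j))}, \]
i.e. exactly the $m$-th partial sum of the series on the right of (\ref{familyratintro}). This reduces statement (b) to two things: convergence of the series and its non-vanishing for at least one $j$.

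Convergence I read off from summability. After replacing $f$ by an equivalent map so that the forward orbits of the $c_j$ avoid infinity and stay in a fixed compact set $E$ on which the holomorphic perturbation direction $u$ is bounded, the factor $1/(1+|f^n(v_j)|^2)\le 1$ while $1+|v_j|^2$ is a constant, so (\ref{summ}) for $c=c_j$ gives $\sum_n |(f^n)'(v_j)|^{-1}<\infty$; hence $\sum_n u(f^n(c_j))/(f^n)'(f(c_j))$ converges absolutely and the limit in (\ref{familyratintro}) exists and equals it. For part (a), I build $\Lambda_f$ as a local slice of $X_f$ transverse to the three-dimensional action of the M\"obius group (for instance by normalizing the positions of marked critical points); the implicit function theorem then shows every nearby $g\in X_f$ is equivalent to a unique $\hat g\in\Lambda_f$, and the slice has the stated dimension $p'$.

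It remains to produce the $r$-dimensional $\Lambda\subset\Lambda_f$ for which non-vanishing holds. I view the ``transfer'' map $L(u)=(L_1(u),\dots,L_r(u))$, with $L_j(u)=\sum_{n\ge0}u(f^n(c_j))/(f^n)'(f(c_j))$, as a linear functional of the perturbation direction $u\in T_f\Lambda$. Since $\Lambda$ is $r$-dimensional, the assertion ``$u\neq0\Rightarrow$ some $L_j(u)\neq0$'' is precisely injectivity of $L|_{T_f\Lambda}$, equivalently that $L$ is an isomorphism of $T_f\Lambda$ onto $\C^r$. I therefore choose $\Lambda$ so that its tangent directions at $f$ are adapted to independently prescribing the $r$ renormalized critical-value motions, and then prove that $L$ restricted to this $r$-dimensional space has trivial kernel. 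When $f$ and its critical points are real, all of these objects — the slice, the directions, and the functionals — are defined by real-analytic conditions respecting complex conjugation, so the construction descends to the reals.

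The main obstacle is exactly this kernel-triviality. A direction $u\in T_f\Lambda$ with $L_j(u)=0$ for all $j$ means that, to first order and in the renormalized sense above, none of the critical values moves; the strategy is to integrate such a $u$ into a holomorphic motion conjugating the dynamics of $f$, which by rigidity must be M\"obius-trivial and hence force $u=0$ inside the transverse slice $\Lambda_f$. The three hypotheses enter precisely here: summability of the $c_j$ is what makes each $L_j$ a well-defined functional and identifies its value with the first variation of the conjugacy along the $j$-th critical orbit; the $C$-compactness of $K=\bigcup_j\omega(c_j)$ supplies the measure- and capacity-theoretic control (guaranteed, e.g., when $K$ has zero planar Lebesgue measure) needed to upgrade the pointwise vanishing along the critical orbits into a globally defined invariant object off $K$; and non-exceptionality excludes the flexible Latt\`es maps, the only maps carrying an invariant line field that could support a nontrivial element of the kernel. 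Combining the telescoping identity, the summable convergence, the transverse slice, and this kernel-triviality argument completes the proof.
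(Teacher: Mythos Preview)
Your telescoping identity and the convergence argument from summability are fine and match the paper; likewise the idea that part (b) reduces to showing that the linear map $L:u\mapsto (L_1(u),\dots,L_r(u))$ has rank $r$ on $T_f\Lambda_f$, after which $\Lambda$ is obtained by selecting $r$ coordinates giving a nonsingular minor. But your justification of this rank statement is not a proof --- it is a narrative. You propose to ``integrate $u$ into a holomorphic motion conjugating the dynamics'' and then appeal to ``rigidity'' and ``invariant line fields''. None of this is made precise: you do not explain how a single infinitesimal direction $u$ with $L_j(u)=0$ for all $j$ produces a conjugacy (it does not, in general), which rigidity theorem you invoke, or how $C$-compactness of $K$ is actually used. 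The order is also off: you cannot pick $\Lambda$ first and then prove injectivity of $L$ on $T_f\Lambda$; the choice of the $r$ coordinate directions defining $\Lambda$ is only possible \emph{after} you know the $r\times p'$ matrix $\mathbf L$ has rank $r$.

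The paper's mechanism for the rank statement is concrete and quite different from what you sketch. One works in coordinates $(\sigma,b,v_1,\dots,v_{p'})$ on $\Lambda_{d,\bar p'}$ and writes $L_j(u)$ as a linear combination of the entries $L(c_j,v_k)$ (and $L(c_j,\sigma)$). Assuming a nontrivial dependence among the rows, one uses an explicit operator identity (Proposition~\ref{vaznoerat}) to produce an integrable fixed point $H(x)=\sum_k \alpha_k/(b_k-x)$ of the Ruelle transfer operator $T_f$. The contraction property of $T_f$ (Lemma~\ref{contr}) --- this is exactly where non-exceptionality enters --- forces $H\equiv 0$ off $K$; then $C$-compactness of $K$ (Lemma~\ref{ccomp}) forces all $\alpha_k=0$, i.e.\ $\sum a_j H_j=0$; finally a combinatorial argument (Lemma~\ref{stranno}) rules out cancellations among the $H_j$ coming from coinciding critical values and gives $a_j=0$ for all $j$. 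None of these ingredients --- the operator identity, the construction of the Wolff--Denjoy fixed point, the contraction lemma, or the cancellation analysis --- appears in your proposal, and the phrases ``holomorphic motion'' and ``rigidity'' do not substitute for them.
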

If $f_t(z)=z^d+t$, the limit above is a
{\it similarity factor} between the dynamical and the parameter planes,
see~\cite{R-L0}.

Let us list some cases when the set $K$ defined above has zero 
Lebesgue measure.

({\bf 1}). Suppose that every 
critical point $c$ of $f$ satisfies the Misiurewicz condition,
i.e., $c$ lies in the Julia set $J$ of $f$ and
$\omega(c)$ contains no critical points 
and parabolic cycles. Then $\omega(c)$ is an invariant hyperbolic set of $f$
(by Mane's Hyperbolicity Theorem~\cite{ma}).
By the bounded distortion property, it follows that $\omega(c)$
is of measure zero. 
Therefore, if $c_1,...,c_{p'}$  satisfy the Misiurewicz condition,
the Lebesgue measure $|K|$ of $K$ is zero.

({\bf 2}). If all 
critical points  
satisfy the Collet-Eckmann condition
(which clearly implies the summability), 
and the 
$\omega$-limit set of each of them is not the whole sphere, then
the measure of their union is zero~\cite{PR}. See also~\cite{PR1}
for a rigidity result for Collet-Eckmann holomorphic maps.

({\bf 3}). If all the 
critical points in the Julia set $J$ of a rational function $f$
are summable, $f$ has no neutral cycles, 
and $J$ is not the whole sphere, then
the Lebesgue measure of $J$ is equal to zero
~\cite{BS},~\cite{R-L}. In particular, $|K|=0$.
Moreover, as in the case (2), if $J={\bf \bar C}$, 
it is enough to assume that the 
$\omega$-limit set of each of them is not the whole sphere, 
and then again  $|K|=0$
(the proof follows from~\cite{RS}, as explained in~\cite{R-L1}).

Theorem~\ref{tworatintro} being applied in the case {\bf (1)}
of the list above yields the following corollary first obtained in~\cite{S}.
Let $f$ be a non-exceptional rational function of degree $d$.
Assume that every critical point $c_j$ of $f$, $1\le j\le 2d-2$, is simple
and satisfies the Misiurewicz condition. 
Replacing if necessary $f$ by its
equivalent, 
one can assume that the set $K=\cup_{j=1}^{2d-2}\omega(c_j)$ 
does not contain infinity.
Let $f_\lambda$ be
a family of rational maps of degree $d$
which depends (complex) analytically on $\lambda\in N$,
where $N$ is a $2d-2$-dimensional complex manifold,
and $f_{\lambda_0}=f$. 
In particular, there exist analytic functions
$c_j(\lambda)$, $1\le j\le 2d-2$, such that $c_j(\lambda)$ are 
the critical points of $f_\lambda$,  $c_j(\lambda_0)=c_j$.
Furthermore, 
since the set $K\subset {\bf C}$ is 
hyperbolic for $f$, 
it persists for $f_\lambda$, for $\lambda$ in some neighborhood $W$ of
$\lambda_0$:
there exists a continuous map $(\lambda, z)\in W\times K
\mapsto z(\lambda)\in {\bf C}$
(here $z(\lambda)$ is the ''holomorphic motion'' 
of the point $z\in K$), so that
$z=z(\lambda_0)\in K$, the functions $z(\lambda)$
are analytic in $W$,
and, moreover, for each $\lambda\in W$,
the points $z(\lambda)$ are different for different initial points
$z=z(\lambda_0)$, and the set
$K(\lambda)=\{z(\lambda)\}$ is an 
invariant hyperbolic set for $f_\lambda$.
\begin{coro}\label{misiu} (\cite{S})
Assume the preceding hypotheses on the map $f$ and the family $f_\lambda$
and also that no two different maps from $N$ 
are equivalent. 
Let $a_j=f^l(c_j)\in K$, where $l>0$, and
$a_j(\lambda)$ be the holomorphic motion of the point $a_j$.
Then, for every $l$ big enough, the map 
\begin{equation}
{\bf h}_l: \lambda\mapsto \left\{f^l_\lambda(c_1(\lambda))-a_1(\lambda),...,
f^l_\lambda(c_{2d-2}(\lambda))-a_{2d-2}(\lambda)\right\}
\end{equation}
is (locally) invertible near $\lambda_0$.
\end{coro}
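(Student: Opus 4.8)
The plan is to deduce Corollary~\ref{misiu} from Theorem~\ref{tworatintro} (equivalently Theorem~\ref{onerat}) by showing that the differential $d{\bf h}_l$ at $\lambda_0$ is invertible for all large $l$. First I would set up coordinates: fix a tangent vector $\xi\in T_{\lambda_0}N$ and let $f_t$ be the one-parameter family $\lambda(t)$ with $\lambda(0)=\lambda_0$, $\lambda'(0)=\xi$, so that $f_t(z)=f(z)+tu(z)+O(t^2)$ with $u=\frac{d}{dt}|_{t=0}f_{\lambda(t)}$. Because no two maps in $N$ are equivalent, $u$ is not of the form obtained by an infinitesimal Mobius conjugacy, and in particular, after applying the reduction of Theorem~\ref{tworatintro} (passing to the manifold $\Lambda_f$ and its submanifold $\Lambda$), the hypothesis ``$u\neq 0$'' of part~(b) is met for the relevant projected family. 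I would therefore first check that the $2d-2$-dimensional family $f_\lambda$ can be identified, near $\lambda_0$, with the manifold $\Lambda_f$ of Theorem~\ref{tworatintro} for the collection of all $r=2d-2$ critical points (all of which satisfy the Misiurewicz condition, hence are summable with $C$-compact — indeed hyperbolic, measure-zero — union of $\omega$-limit sets, so the theorem applies and here $\Lambda=\Lambda_f$).

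Next, for each critical point $c_j$ I would compute the $j$-th component of $d{\bf h}_l(\xi)$:
\begin{equation}\label{componentcomp}
\frac{d}{dt}\Big|_{t=0}\left(f_t^l(c_j(t))-a_j(\lambda(t))\right)
=(f^{l-1})'(v_j)\,\left(\frac{\frac{d}{dt}|_{t=0}f_t^l(c_j(t))}{(f^{l-1})'(v_j)}\right)-\dot a_j,
\end{equation}
where $\dot a_j=\frac{d}{dt}|_{t=0}a_j(\lambda(t))$ stays bounded (it is the derivative of a holomorphic motion of a point of the hyperbolic set $K$). By~(\ref{familyratintro}), the bracketed ratio converges as $l\to\infty$ to $L_j(u):=\sum_{n\ge 0} u(f^n(c_j))/(f^n)'(f(c_j))$, and by Theorem~\ref{tworatintro}(b) at least one $L_j(u)$ is non-zero whenever $u\neq 0$. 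Since $|(f^{l-1})'(v_j)|\to\infty$ exponentially (summability plus the Misiurewicz/hyperbolicity of $K$ forces the Collet--Eckmann-type lower bound on $|(f^n)'(v_j)|$; alternatively one only needs $|(f^{l-1})'(v_j)|\to\infty$, which already follows from~(\ref{summ})), the term $(f^{l-1})'(v_j)L_j(u)$ dominates the bounded correction $\dot a_j$ for large $l$, so the $j$-th component of $d{\bf h}_l(\xi)$ is non-zero, hence $d{\bf h}_l(\xi)\neq 0$. Thus for each fixed $\xi\neq 0$ and all sufficiently large $l$, $d{\bf h}_l(\xi)\neq 0$.

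To upgrade ``$d{\bf h}_l(\xi)\neq 0$ for each fixed $\xi$ and large $l$'' to ``$d{\bf h}_l$ invertible for all large $l$'' I would argue by compactness/homogeneity on the unit sphere of $T_{\lambda_0}N$: the map $\xi\mapsto \big(L_1(u_\xi),\dots,L_{2d-2}(u_\xi)\big)$ is linear in $\xi$ (since $u_\xi$ depends linearly on $\xi$) and, by the above, has trivial kernel, hence is an isomorphism ${\bf C}^{2d-2}\to{\bf C}^{2d-2}$; therefore on the compact unit sphere $\|(L_1(u_\xi),\dots,L_{2d-2}(u_\xi))\|$ is bounded below by some $\delta>0$. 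Normalising each component of~(\ref{componentcomp}) by $(f^{l-1})'(v_j)$ and using uniform convergence in $\xi$ of the finite truncations of the series~(\ref{familyratintro}) (the tails are bounded by the convergent series~(\ref{summ}) times $\|u_\xi\|$), the rescaled differential $D_l:=\mathrm{diag}\big((f^{l-1})'(v_j)\big)^{-1}\,d{\bf h}_l$ converges to the isomorphism $\xi\mapsto (L_j(u_\xi))_j$ uniformly on the unit sphere; hence for $l$ large $D_l$ is an isomorphism, and so is $d{\bf h}_l$. By the inverse function theorem ${\bf h}_l$ is a local diffeomorphism near $\lambda_0$, which is the assertion.

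The main obstacle is the passage from the abstract manifold $\Lambda$ of Theorem~\ref{tworatintro} back to the given family $N$: one must verify that the non-degeneracy conclusion~(b), which is stated for curves inside $\Lambda$, transfers to arbitrary tangent directions $\xi\in T_{\lambda_0}N$ once the ``no two maps in $N$ are equivalent'' hypothesis is used to kill the Mobius directions; concretely, one needs that the composition $N\hookrightarrow X_f\to\Lambda_f$ (part~(a)) is, near $\lambda_0$, a local isomorphism onto $\Lambda_f=\Lambda$, so that a non-zero $\xi$ produces a non-zero infinitesimal deformation $u$ transverse to the Mobius orbit — equivalently, the linear map $\xi\mapsto u_\xi \bmod(\text{Mobius infinitesimals})$ is injective. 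Granting that (which is exactly where the hypothesis on $N$ enters, together with $\dim N=2d-2=p'$), the rest is the elementary asymptotic/compactness bookkeeping sketched above; the only other point requiring a little care is the uniform control of the tails of~(\ref{familyratintro}), which is immediate from summability~(\ref{summ}) and the boundedness of $u$ on $\omega$-limit sets.
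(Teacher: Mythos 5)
Your overall route is the same as the paper's: reduce to the family $N$ being identified with $\Lambda_f$ (here $r=p'=2d-2$, so $\Lambda=\Lambda_f$), use the non-degeneracy of the matrix of limits $L_{j,k}$ coming from Theorem~\ref{tworatintro}(b), observe that the derivative of the holomorphic motion $a_j(\lambda)$ is bounded uniformly in $l$, and win because the normalized differential converges to an invertible matrix while $D_l^{-1}\to 0$; your ``compactness on the unit sphere'' phrasing is just the matrix convergence ${\bf L}^{(l)}_f\to{\bf L}_f$ in the paper. However, the step you explicitly defer (``granting that\dots'') is precisely the non-routine content of the corollary, and leaving it unproved is a genuine gap: it is the only place the hypothesis that no two maps of $N$ are equivalent enters, and without it Theorem~\ref{tworatintro}(b), which concerns curves \emph{inside} $\Lambda$, says nothing about tangent directions $\xi\in T_{\lambda_0}N$. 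The paper closes it in two moves. First, one needs the correspondence $\pi: g\in X_f\mapsto g_1\in\Lambda_f$ to be a \emph{well-defined holomorphic map}, which is Proposition~\ref{gen}(b); this in turn uses that all critical points of $f$ are summable, hence $f$ has no parabolic cycles and is of type {\bf (H)}. Citing part (a) of Theorem~\ref{tworatintro}, as you do, only gives existence of an equivalent map in $\Lambda_f$, not holomorphy of the correspondence. Second, $\pi|_N$ is injective (two maps of $N$ with the same image would be equivalent to each other), and an injective holomorphic map between complex manifolds of the same dimension $2d-2$ is a local analytic isomorphism; hence $\pi'_0$ is invertible. Your alternative formulation, injectivity of $\xi\mapsto u_\xi \bmod(\hbox{Mobius infinitesimals})$, does not follow from the non-equivalence hypothesis by any argument you give.

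A second, smaller point you should make explicit: ${\bf h}_l$ is built from $f_\lambda$, while the matrix of limits is naturally attached to the projected family $\pi(f_\lambda)\in\Lambda_f$ (or, in your version, one must know that the kernel of $u\mapsto(L_j(u))_j$ contains the infinitesimal Mobius directions, so that $(L_j(u_\xi))_j$ only sees the $\Lambda_f$-component of $u_\xi$). Either observe that $L_j$ vanishes on Mobius directions because for $f_t=M_t\circ f\circ M_t^{-1}$ the functions $t\mapsto f_t^m(c_j(t))=M_t(f^m(c_j))$ have derivatives bounded in $m$ while $(f^{m-1})'(v_j)\to\infty$, or compare ${\bf h}_l$ for $f_\lambda$ and for $\pi(f_\lambda)$ and note the discrepancy of the derivatives is bounded uniformly in $l$, hence harmless after multiplying by $D_l^{-1}$, exactly like the term $\bar a_l'$. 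With these two points supplied, your asymptotic bookkeeping (including the remark that $|(f^{l-1})'(v_j)|\to\infty$ already follows from~(\ref{summ})) is correct and matches the paper's argument.
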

See Subsection~\ref{mainres} for the details and for an asymptotics 
of the inverse to the derivative of ${\bf h}_l$.

Let us make some further comments.
In~\cite{ts}, Theorem~\ref{tworatintro} was shown for real quadratic 
polynomials which satisfy the Collet-Eckmann condition 
(to be precise, Tsujii proves that the 
limit~(\ref{familyratintro}) is positive for real Collet-Eckmann maps
$t-x^2$).
For all quadratic polynomials with the summable critical point,
Theorem~\ref{tworatintro} was originally proven in~\cite{Le}
(see also~\cite{Av}).
In turn, Corollary~\ref{misiu} had been known before
for the quadratic polynomial family, in the case of 
a strictly preperiodic critical point~\cite{dh}. 
See also
~\cite{R-L0} for Corollary~\ref{misiu} for the family $z^d+c$, and
~\cite{be} for a proof 
in the particular case of Corollary~\ref{misiu}
when each critical point is strictly preperiodic.

Here we state another corollary of Theorem~\ref{tworatintro},
which is close to the main result of~\cite{mak}, see Comment~\ref{tha}
below.
Recall the following terminology.
Let $X$ be a complex manifold, and $f_\lambda (z)$, $\lambda\in X$, 
be a holomorphic family of rational maps over $X$.
The map $f_{\lambda_0}$ is called structurally 
(respectively, quasiconformally) stable in $X$,
if there exists a neighborhood $U$ of $\lambda_0$, such that:
(a) $f_{\lambda_0}$ and $f_{\lambda}$ are topologically 
(respectively, quasiconformally) conjugated,
for every $\lambda\in U$, (b) the conjugacy tends 
to the identity map as $\lambda\to \lambda_0$.
By a fundamental result of~\cite{MSS},~\cite{mcmsul},
the sets of structurally stable and quasiconformally stable rational maps
in $X$ coincide, and, moreover, the quasiconformal conjugacies form
a holomorphic motion. In particular, the condition (b) of the definition
follows from (a).
(We will not use these results though.) 
In the next statement, $f$ is not necessarily non-exceptional.
Note that the set of functions $X_f$ and the manifold $\Lambda_f$ appeared in
Theorem~\ref{tworatintro} are defined without any changes for all maps $f$
(including the exceptional ones), see 
Subsection~\ref{disc} and Proposition~\ref{gen} in particular.
\begin{coro}\label{cormak} (cf.~\cite{mak})
Suppose that a rational function $f$ of degree $d\ge 2$ 
has a summable critical point $c$,
and its $\omega$-limit set $\omega(c)$ is a $C$-compact.
Then $f$ is not structurally stable in $\Lambda_f$.
\end{coro}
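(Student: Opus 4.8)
The plan is to derive Corollary~\ref{cormak} from Theorem~\ref{tworatintro} (more precisely, from its technical refinement Theorem~\ref{onerat}, which the excerpt says will be proved later and which applies verbatim when $r=1$ and even when $f$ is exceptional, since $X_f$ and $\Lambda_f$ are defined in general). The contrapositive is the natural target: assuming $f$ \emph{is} structurally stable in $\Lambda_f$, I will produce a contradiction with the non-vanishing of the limit~(\ref{familyratintro}). Structural (equivalently, by~\cite{MSS},~\cite{mcmsul}, quasiconformal) stability in $\Lambda_f$ means there is a neighborhood $U$ of $f$ in $\Lambda_f$ such that every $f_t\in U$ is topologically conjugate to $f$ by a conjugacy tending to the identity; in particular, along any smooth one-parameter family $f_t$ in $\Lambda_f\cap U$ through $f$, the marked critical point $c(t)$ and its forward orbit move continuously and the combinatorics of $(f_t^m(c(t)))_{m\ge 0}$ is frozen.

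\medskip

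\noindent The first step is to choose the family. Since $\Lambda\subset\Lambda_f$ is the $r$-dimensional (here $1$-dimensional) submanifold from Theorem~\ref{tworatintro}, and $\Lambda_f$ is $p'$-dimensional with $p'\ge 1$, I would pick a smooth curve $f_t$ inside $\Lambda$ through $f=f_0$ with $u=\frac{d}{dt}|_{t=0}f_t\not\equiv 0$ (possible because $\dim\Lambda=r\ge 1$ and the tangent directions to $\Lambda$ at $f$ are realized by nonzero perturbations $u$ — this is part of how $\Lambda$ is constructed). Applying part (b) of Theorem~\ref{tworatintro} with $r=1$, the limit
\[
L=\lim_{m\to\infty}\frac{\frac{d}{dt}|_{t=0}f_t^m(c(t))}{(f^{m-1})'(v)}=\sum_{n=0}^\infty\frac{u(f^n(c))}{(f^n)'(f(c))}
\]
exists and is \emph{nonzero}, where $v=f(c)$.

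\medskip

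\noindent The second step is to show that structural stability forces this derivative ratio to be zero, or at least bounded in a way incompatible with $L\ne 0$. The mechanism: summability of $c$ means $\sum_n (1+|f^n(v)|^2)|(f^n)'(v)|^{-1}/(1+|v|^2)<\infty$, so $|(f^{m-1})'(v)|\to\infty$ at a rate controlling the geometry of the orbit; in particular $|(f^{m-1})'(v)|\to\infty$ (up to the spherical normalizing factors, which are bounded since $\omega(c)$ is compact in $\C$ after the Möbius normalization). Meanwhile, if $f_t$ is quasiconformally conjugate to $f$ with conjugacy $\varphi_t\to\mathrm{id}$, then $f_t^m(c(t))=\varphi_t(f^m(c))+o(1)$ uniformly in $m$, so $f_t^m(c(t))$ stays within a \emph{bounded} spherical distance of the orbit $f^m(c)\in\omega(c)$, which is a $C$-compact and in particular bounded in $\C$. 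Hence the numerator $\frac{d}{dt}|_{t=0}f_t^m(c(t))$ is $O(1)$ in $m$. Dividing by $(f^{m-1})'(v)\to\infty$ gives $L=0$, contradicting Theorem~\ref{tworatintro}(b). Therefore $f$ is not structurally stable in $\Lambda_f$.

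\medskip

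\noindent \textbf{The main obstacle} is the second step: making rigorous that structural (or quasiconformal) stability really does bound $\frac{d}{dt}|_{t=0}f_t^m(c(t))$ uniformly in $m$. The subtlety is that the conjugacy $\varphi_t$ is only quasiconformal, not smooth, so one cannot simply differentiate $f_t^m(c(t))=\varphi_t(f^m(c))$ in $t$ termwise. The right way is probably to argue that along a holomorphic motion the points $f_t^m(c(t))$ are uniformly bounded (they live in a neighborhood of the compact $\omega(c)$ that shrinks as $t\to 0$), extract from this a uniform bound on $|\frac{d}{dt}|_{t=0}f_t^m(c(t))|$ via a Cauchy estimate on the holomorphic (in $t$) function $t\mapsto f_t^m(c(t))$ on a fixed disk — here holomorphy in $t$ comes from $f_t$ being a holomorphic family and $c(t)$ an analytic branch — and then conclude as above. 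One must also handle the normalizing factor $1+|f^n(v)|^2$ in the summability condition; this is where the $C$-compactness of $K=\omega(c)$ and the reduction "forward orbits avoid infinity" (allowed since we may replace $f$ by a Möbius equivalent, and $\Lambda_f$, $X_f$ transform accordingly) are used to keep everything in a bounded part of the plane. A secondary point is to confirm that the curve $f_t$ can genuinely be taken inside $\Lambda$ (not merely $\Lambda_f$) with $u\ne 0$; this is immediate once one recalls from the construction of $\Lambda$ in Subsection~\ref{disc} that $\Lambda$ is a genuine $r$-dimensional submanifold through $f$, so its tangent space at $f$ is $r\ge 1$ dimensional and consists of nonzero perturbations.
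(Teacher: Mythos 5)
Your argument for the non-exceptional case is essentially the paper's own: pick a one-dimensional family $f_t\subset\Lambda$ with $u\neq 0$, use Theorem~\ref{tworatintro}(b) to get a non-zero limit of $\frac{d}{dt}|_{t=0}f_t^m(c(t))/(f^{m-1})'(v)$, note that summability plus boundedness of the critical orbit forces $|(f^{m-1})'(v)|\to\infty$, hence $\frac{d}{dt}|_{t=0}f_t^m(c(t))\to\infty$; and then observe that structural stability (conjugacies tending to the identity, $\omega(c)$ compact in the plane) would make the holomorphic functions $t\mapsto f_t^m(c(t))$ uniformly bounded near $t=0$, which by Cauchy estimates bounds the derivatives at $t=0$ uniformly in $m$ --- a contradiction. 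That is exactly the mechanism in the paper, and what you flag as ``the main obstacle'' is resolved there in the same way you suggest.

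The genuine gap is the exceptional case. The corollary is stated for arbitrary $f$ (the paper stresses just before it that ``$f$ is not necessarily non-exceptional''), and the hypotheses are satisfied by flexible Latt\`es maps: their critical points are preperiodic to repelling cycles, hence summable, and $\omega(c)$ is a finite set, hence a $C$-compact. Your parenthetical claim that the transversality theorem ``applies verbatim \dots even when $f$ is exceptional'' is false: both Theorem~\ref{tworatintro} and Theorem~\ref{onerat}(b) explicitly assume $f$ is non-exceptional (the remark before the corollary only says that the \emph{spaces} $X_f$, $\Lambda_f$ are defined for all $f$, not that the rank statement holds). So your proof simply does not cover exceptional $f$. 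The paper handles this case by a different, elementary argument: a flexible Latt\`es map has only simple critical points, so $X_f$ is all of ${\bf Rat}_d$ near $f$; since $f$ is critically finite, its critical orbit relations cannot persist for all nearby maps in ${\bf Rat}_d$, so $f$ is not structurally stable in $X_f$; and because $f$ has no neutral cycles, Proposition~\ref{gen}(b) gives the holomorphic correspondence $g\mapsto g_1\in\Lambda_f$ with M\"obius conjugacies tending to the identity, so stability in $\Lambda_f$ would imply stability in $X_f$. You need to either add such an argument or restrict your claim to non-exceptional maps, which would prove less than the stated corollary.
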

For the proof, see Subsection~\ref{mainres}.
\begin{com}\label{tha}
In this comment, we 
consider the case when $f$ has only simple critical points. 
Then (and only then)  
$X_f$ is the (local)
space of all rational functions ${\bf Rat}_d$ of degree $d$ which are
close to $f$, and $\Lambda_f$ is its $2d-2$-dimensional submanifold. 
In this case,
Corollary~\ref{cormak} was proved essentially in~\cite{mak} (Theorem A).
To be more precise, in Theorem A of~\cite{mak}, explicit conditions (1)-(4) 
are given, so that
each of them implies that $f$ is not quasiconformally stable in 
the space ${\bf Rat}_d$. 
In fact, each of these 
conditions (2)-(4) (but not (1)) implies that $\omega(c)$ is a $C$-compact;
this is the only fact needed to show that
$f$ is not quasiconformally stable
in this part
of the proof from~\cite{mak}. 
On the other hand, the condition (1) of Theorem A
(which says that $c\notin\omega(c)$) requires a bit different considerations 
and is not covered by the above Corollary~\ref{cormak}.
\end{com}

For other results about
the transversality, see~\cite{mult},
preprint~\cite{e1}, and also~\cite{Gau}. 
For dynamical and statistical properties of one-dimensional and 
rational maps under different summability conditions,
see~\cite{NS},~\cite{GS}, 
~\cite{BS},~\cite{R-L},~\cite{RS}.

The main results of the paper are contained in Theorem~\ref{one}
(plus Comment~\ref{onecoro}) for polynomials,
and in Theorem~\ref{onerat} and Theorem~\ref{tworatintro}
(stated above) for rational functions. 
See also Comment~\ref{last} for
a generalization which takes into account 
non-repelling cycles, 
and Propositions~\ref{main} and~\ref{mainrat}, 
which are of an independent interest.
As usual, the polynomial case is more transparent and
technically easier, so we consider it separately, see Section~\ref{poly}.
Proposition~\ref{dergenlem} and Lemma~\ref{formula} of this Section
are more general and used also in the next
Section~\ref{rat}, where, by the same method, we treat the case of
rational functions.

One of the main tools of the paper is a Ruelle (or pushforward)
operator $T_f$ associated to a rational map $f$ 
(see Subsection~\ref{sectruelle}). It is introduced to the field
of complex dynamics by Thurston, see~\cite{dht},
and is used widely since then. A fundamental property of the operator
(observed by Thurston) is that it is contracting (see 
Subsection~\ref{wd}).
The method of the proof of
Theorem~\ref{one} and Theorem~\ref{onerat}
consists in applying the 
following three basic components: {\bf (i)} 
explicit (formal) identities involving
the Ruelle operator, which are established in~\cite{LSY}
(as a formula for the resolvent of $T_f$ 
and in the case when $f$ has simple critical points),
see also~\cite{lpade}; this allows us to construct explicitely
an integrable fixed point of the operator $T_f$
assuming that the determinant of some matrix
vanishes (in other words, if some vectors of the coefficients
in the identities are linearly dependent),
{\bf (ii)} the contraction property of the operator together with {\bf (i)} 
lead to the conclusion that the vectors of the coefficients
are linearly independent.
The scheme {\bf (i)}-{\bf (ii)} appeared in~\cite{Le},
where the unicritical case was treated, and then applied
in~\cite{mak} for rational functions with simple critical points, 
and in~\cite{mult}.

The last component, which is the key new part of the paper, is
as follows: {\bf (iii)} a formula for the
coefficients of the identities for an arbitrary polynomial
or rational function via the derivative 
with respect to the canonical local coordinates
in some functional spaces,
see Propositions~\ref{main}-\ref{vaznoe} 
and~\ref{mainrat}-\ref{vaznoerat}.

\begin{com}\label{nota}
We will consider different subsets $N$ 
of polynomials or rational functions equipped by the
structure of a complex-analytic $l$-dimensional
manifold and always having the following property: nearby maps in $N$ have
the same number of distinct critical points with the same corresponding 
multiplicities. In other words,
given $f\in N$ with the distinct critical points $c_1,...,c_{q'}$
and corresponding multiplicities $m_1,...,m_{q'}$, there are
$q'$ functions $c_1(g),...,c_{q'}(g)$ which are defined
for $g$ in a neighborhood of $f$ in $N$ such that
$c_j(g)\to c_j$ as $g\to f$ and 
$c_j(g)$ is a critical point of $g$ with multiplicity $m_j$.
It is easy to see that then each
$c_j(g)$ is 
a holomorphic function of $g\in N$ (provided $c_j\in {\bf C}$).
\end{com}
{\it Throughout the paper we 
use the following convention about the notations.}
Let $f\in N$, and $\bar x(g)=\{x_1(g),...,x_l(g)\}$ be 
a (local, near $f$) holomorphic coordinate of $g$ in the space $N$. 
If $P: W\to {\bf C}$ is a function which is defined and analytic 
in a neighborhood $W$ of $f$ in the space $N$, we denote by 
$\frac{\partial P}{\partial x_k}$ the partial derivative
of $P(g)$ w.r.t. $x_k(g)$ calculated at the point $\bar x(f)$, 
i.e., at $g=f$.
For example, $\frac{\partial c_j}{\partial x_k}$
denotes $\frac{\partial c_j(g)}{\partial x_k}$ evaluated at $g=f$. 
Furthermore, if $P=g^m$, 
we denote $\frac{\partial f^m}{\partial x_k}(z)$ to be 
$\frac{\partial g^m}{\partial x_k}(z)$ evaluated at $g=f$.
For a rational function $g(z)$, $g'$ always means the derivative 
w.r.t. $z\in {\bf C}$. For instance,
$\frac{\partial f'}{\partial v_k}$
means $\frac{\partial (\partial g/\partial z)}{\partial v_k}$
calculated at the point $f$.
Note also that for a critical point $c(g)=c_j(g)$ of $g$, we have:
$\frac{\partial (g^m(c(g)))}{\partial x_k}|_{g=f}=
\frac{\partial f^m}{\partial x_k}(c)$, where $c=c(f)$.

\

{\bf Acknowledgments}. 
The paper was inspired by a recent question by Weixiao Shen
to the author about a generalization of Corollary 1 (b) 
of~\cite{Le} to higher degree polynomials.
The answer is contained in Theorem~\ref{one}.
In turn, it has been used recently in~\cite{gash}.
The author thanks Weixiao Shen
for the above question and discussions, 
Feliks Przytycki for discussions, and Juan Rivera-Letelier for few
very helpful comments and for the reference~\cite{Gau}.
Finally, the author thanks the referee for many comments
that helped to improve the exposition.

\section{Polynomials}\label{poly}
\subsection{Polynomial spaces}
Let $f$ be a monic centered 
polynomial of degree $d\ge 2$, i.e., it has the form
$$f(z)=z^d+a_1 z^{d-2}+...+a_{d-1}.$$
Consider the space $\Pi_d$ of all 
monic and centered polynomials of the same degree $d$.
Vector of coefficients of $g\in \Pi_d$
defines a (global) coordinate in $\Pi_d$ and identifies
$\Pi_d$ with ${\bf C}^{d-1}$. 

Let 
$C=\{c_1,...,c_p\}$ be the set of all {\it different} critical points of $f$,
with the vector 
of multiplicities
$\bar p=\{m_1, m_2,...,m_p\}$,
so that $f'(z)=d\Pi_{i=1}^p (z-c_i)^{m_i}$..
Now, we consider a local subspace $\Pi_{d,\bar p}$ of $\Pi_d$
associated to the polynomial $f$, as in~\cite{mult}:
\begin{defi}\label{poldef}
The space $\Pi_{d,\bar p}$ consists of all $g$ 
from a neighborhood of $f$ in $\Pi_d$
with the same number $p$
of different critical points $c_1(g),...,c_p(g)$
and the same vector of multiplicities $\bar p$.
Here $c_i(g)$ is close to $c_i=c_i(f)$ and has the multiplicity
$m_i$, $1\le i\le p$. 
\end{defi}
In particular, $\Pi_{d, \overline{d-1}}$ 
consists of all monic centered polynomials
of degree $d$ close to $f$
if and only if
all the critical points of $f$ are simple. At the other extreme case, the
space $\Pi_{d, \bar 1}$ consists of the unicritical family
$z^d+v$.

Note that some of the critical values $v_1,...,v_p$ of $f$ may coincide.

Consider the vector of critical values
$V(g)=\{v_1(g),...,v_p(g)\}$, where $v_i(g)=g(c_i(g))$.
The set $\Pi_{d, \bar p}$ is an analytic subset of $\Pi_d$.
The following fact is proved in Proposition 1 of~\cite{mult}:
\begin{prop}\label{local}
$\Pi_{d, \bar p}$ is a $p$-dimensional complex
analytic manifold, and the vector $V(g)$ is a local analytic coordinate
in $\Pi_{d, \bar p}$.
\end{prop}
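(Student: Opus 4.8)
The plan is to realize $\Pi_{d,\bar p}$, near $f$, as the image of an explicit holomorphic embedding of an open subset of $\C^p$, and then to read off the coordinate statement by differentiating. Once one knows how to reconstruct the polynomials of $\Pi_{d,\bar p}$ from their critical data, both assertions reduce to a short computation with a differential.

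I would set up the parametrization as follows. For $c_1,\dots,c_p$ distinct and close to the critical points $c_i(f)$, and $a\in\C$ close to $f(0)$, put
\[
\Phi(c_1,\dots,c_p,a)(z)\;=\;a+\int_0^z d\prod_{i=1}^p(w-c_i)^{m_i}\,dw .
\]
This is a monic polynomial of degree $d$ whose coefficients depend polynomially on $(c_1,\dots,c_p,a)$ and whose $z^{d-1}$-coefficient equals $-\frac{d}{d-1}\sum_i m_i c_i$; hence $\Phi(c_1,\dots,c_p,a)$ is centered exactly when $(c_1,\dots,c_p)$ lies on the hyperplane $H=\{\sum_i m_i c_i=0\}$. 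Let $U$ be a neighbourhood, inside the $p$-dimensional manifold $H\times\C$ (with the $\C$-factor the $a$-axis), of the point corresponding to $f$; then $\Phi\colon U\to\Pi_d$ is holomorphic. It is injective, since $g=\Phi(c,a)$ determines $g'(z)=d\prod_i(z-c_i)^{m_i}$, hence the $c_i$ (unique factorization, and they stay distinct near $f$), hence $a=g(0)$; these recovery maps being continuous, $\Phi$ is a homeomorphism onto its image. Finally, a $g\in\Pi_d$ near $f$ lies in $\Phi(U)$ iff it is monic, centered, and has $p$ distinct critical points with multiplicities $m_1,\dots,m_p$, i.e.\ iff $g\in\Pi_{d,\bar p}$; so $\Pi_{d,\bar p}=\Phi(U)$ locally. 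It then remains to check that $\Phi$ is an immersion (which exhibits $\Pi_{d,\bar p}$ as a $p$-dimensional complex submanifold of $\Pi_d$) and that $V$ is a coordinate.

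To verify both remaining points I would compute differentials. For a tangent vector $(\dot c_1,\dots,\dot c_p,\dot a)$ to $U$ (so $\sum_i m_i\dot c_i=0$), the polynomial $\dot g:=D\Phi(\dot c,\dot a)$ has $\dot g(0)=\dot a$, lies in the tangent space of the monic centered polynomials — hence $\deg\dot g\le d-2$ — and satisfies
\[
\dot g'(z)\;=\;-\,g'(z)\sum_{i=1}^p\frac{m_i\dot c_i}{z-c_i}\;=\;-\,d\sum_{i=1}^p m_i\dot c_i\,(z-c_i)^{m_i-1}\!\!\prod_{l\neq i}(z-c_l)^{m_l};
\]
in particular $\dot g'$ vanishes at each $c_i$ to order $\ge m_i-1$, so $\dot g-\dot g(c_i)$ vanishes at $c_i$ to order $\ge m_i$. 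If $\dot g\equiv 0$, then $\sum_i m_i\dot c_i/(z-c_i)\equiv 0$; the residue at $c_k$ gives $m_k\dot c_k=0$, so all $\dot c_k=0$ and then $\dot a=\dot g(0)=0$; thus $\Phi$ is an immersion. For the coordinate claim it suffices, $U$ and the target $\C^p$ having equal dimension, to show $D(V\circ\Phi)$ is injective at the point corresponding to $f$. Since $V\circ\Phi\colon(c,a)\mapsto(g(c_1),\dots,g(c_p))$ with $g=\Phi(c,a)$, and $g'(c_j)=0$, its differential sends $(\dot c,\dot a)$ to $(\dot g(c_1),\dots,\dot g(c_p))$. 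If all $\dot g(c_j)$ vanish, then by the orders above $\dot g$ vanishes at $c_1,\dots,c_p$ with total multiplicity $\ge\sum_i m_i=d-1>d-2\ge\deg\dot g$, forcing $\dot g\equiv 0$, and hence $\dot c_i=0$, $\dot a=0$ as before. So $V$ is a local analytic coordinate in $\Pi_{d,\bar p}$.

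The only non-routine point is the last comparison of integers: $\deg\dot g\le d-2$, whereas triviality in all $V$-directions would force $\dot g$ to vanish at the critical points with total multiplicity $d-1$, and the one-unit gap — which is precisely what the ``monic, centered'' normalization of $\Pi_d$ provides — closes the argument. Everything else (holomorphy of $\Phi$, its being a homeomorphism onto $\Pi_{d,\bar p}$, the residue step) is bookkeeping; the only real care needed beforehand is to get $\Phi$ and the linear constraint $\sum_i m_i c_i=0$ right.
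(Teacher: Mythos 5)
Your proof is correct. Keep in mind that the paper does not prove Proposition~\ref{local} internally but quotes it from Proposition 1 of~\cite{mult}, so there is no in-text argument to compare against; your parametrization of $\Pi_{d,\bar p}$ by $(c_1,\dots,c_p,a)$ restricted to the hyperplane $\sum_i m_i c_i=0$, with the closing count that $\dot g$ has degree at most $d-2$ yet would have to vanish at the $c_i$ with total multiplicity $\sum_i m_i=d-1$, is a complete and correct self-contained substitute, and this same degree count is exactly the mechanism behind the paper's Proposition~\ref{derlem} characterizing $\frac{\partial f}{\partial v_k}$.
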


\subsection{Main result}
\begin{theo}\label{one}

(a) Let $c$ be a weakly expanding critical point 
of $f$ and $v=f(c)$. Then,
for every $k=1,...,p$,
the following limit exists:
\begin{equation}\label{lim}
L(c, v_k):=\lim_{m\to \infty}
\frac{\frac{\partial (f^m (c))}{\partial v_k}}{(f^{m-1})'(v)}.
\end{equation}

(b) Suppose that $c_1,...,c_r$ are pairwise different weakly expanding
critical points of $f$.
Then the rank of the matrix
\begin{equation}\label{matrix}
{\bf L}=(L(c_j, v_k))_{1\le j\le r, 1\le k\le p}
\end{equation} 
is equal to $r$, i.e., maximal.
\end{theo}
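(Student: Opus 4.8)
The plan is to prove part (a) first by identifying the limit $L(c,v_k)$ with a convergent series, and then to derive part (b) from the contraction property of the Ruelle operator $T_f$ via the scheme \textbf{(i)}--\textbf{(ii)} outlined in the introduction. For part (a), I would differentiate the orbit relation $f_g^m(c(g)) = g(f_g^{m-1}(c(g)))$ (where $g$ ranges over $\Pi_{d,\bar p}$ and $f_g = g$) with respect to the coordinate $v_k$ at $g=f$. Using the chain rule and the convention about partial derivatives fixed in the introduction, this yields a telescoping recursion
\begin{equation}
\frac{\partial f^m(c)}{\partial v_k} = f'(f^{m-1}(c))\,\frac{\partial f^{m-1}(c)}{\partial v_k} + \left.\frac{\partial g}{\partial v_k}\right|_{g=f}(f^{m-1}(c)).
\end{equation}
Dividing by $(f^{m-1})'(v)$ and iterating, the quotient in~(\ref{lim}) becomes a partial sum $\sum_{n=0}^{m-1} \frac{(\partial g/\partial v_k)|_{g=f}(f^n(c))}{(f^n)'(v)}$ (after reindexing and using $v=f(c)$, so $(f^{m-1})'(v)=(f^m)'(c)/f'(c)$ in the appropriate normalization). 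The function $u_k := (\partial g/\partial v_k)|_{g=f}$ is a polynomial of bounded degree, hence $|u_k(f^n(c))|$ grows at most polynomially in $|f^n(c)|$; combined with the summability hypothesis~(\ref{summ}) applied to $v=f(c)$ — which controls $\frac{1+|f^n(v)|^2}{|(f^n)'(v)|}$ — the spherical reformulation shows the series converges absolutely. This gives existence of $L(c,v_k)$ and an explicit formula for it, parallel to~(\ref{familyratintro}).

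For part (b), the strategy is by contradiction: suppose the $r\times p$ matrix $\mathbf{L}$ has rank $<r$, so there is a nonzero vector $(\lambda_1,\dots,\lambda_r)$ with $\sum_j \lambda_j L(c_j,v_k)=0$ for every $k=1,\dots,p$. The key is component \textbf{(i)}: the explicit formal identities for the Ruelle operator $T_f$ from~\cite{LSY},~\cite{lpade} express the vanishing of these linear combinations as the statement that a certain explicitly constructed function $\phi$ — built out of the $\lambda_j$ and the orbits of the $c_j$ — is a fixed point of $T_f$ (or satisfies $(I-T_f)\phi = 0$), and moreover that this $\phi$ is integrable (lies in $L^1$ of the sphere, or the relevant space on which $T_f$ acts). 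Here is where component \textbf{(iii)}, the new input expressed in Propositions~\ref{main}--\ref{vaznoe}, enters: it is precisely what translates the analytic coefficients $L(c_j,v_k)$ (derivatives with respect to the canonical coordinates $v_k$) into the algebraic data feeding the Ruelle-operator identities, so that the linear relation among the columns of $\mathbf{L}$ becomes a genuine fixed-point relation.

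Then component \textbf{(ii)} closes the argument: Thurston's contraction property of $T_f$ on the appropriate space (Subsection~\ref{wd}) forbids a nonzero integrable fixed point, because iterating $T_f$ strictly decreases the norm, so $\phi = T_f^n \phi \to 0$, forcing $\phi \equiv 0$; but $\phi\equiv 0$ together with the construction forces all $\lambda_j=0$, contradicting the choice of a nonzero relation. Hence no nontrivial linear relation among the rows of $\mathbf{L}$ exists, i.e., $\mathrm{rank}\,\mathbf{L}=r$. The main obstacle I anticipate is not the abstract contraction step, which is classical, but the bookkeeping in component \textbf{(iii)}: verifying that the function $\phi$ assembled from a linear relation among the $L(c_j,v_k)$ is genuinely integrable (the summability of each $c_j$ must be leveraged carefully, with the spherical weights in~(\ref{summ}) controlling behavior near infinity) and that it exactly satisfies $T_f\phi=\phi$ rather than merely approximately — this is where the precise form of the resolvent identities from~\cite{LSY} and the coordinate-derivative formulas of Propositions~\ref{main}--\ref{vaznoe} must be invoked in full. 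A secondary technical point is ensuring the coordinate $V(g)=\{v_1(g),\dots,v_p(g)\}$ from Proposition~\ref{local} is the right one to differentiate against so that the $u_k$ have the clean form needed; this is exactly the content of the ``canonical local coordinates'' referred to in \textbf{(iii)}.
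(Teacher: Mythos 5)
Your part (a) is essentially the paper's argument: the telescoping identity for $\partial f^m/\partial v_k$, the fact that $\partial f/\partial v_k$ is a polynomial of degree at most $d-2$ with $\partial f/\partial v_k(c_j)=\delta_{j,k}$, and absolute convergence from summability. One small correction: the spherical weight $1+|f^n(v)|^2$ in~(\ref{summ}) controls a polynomial of degree $d-2$ only when $d\le 4$; the paper instead uses that a summable critical point lies in $J$, so the orbit of $v$ is bounded and the crude bound~(\ref{bdd}) suffices.

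Part (b) as you sketch it has two genuine gaps. First, the contraction step is mis-stated: $T_f$ is only weakly contracting on $L^1$, i.e. $\|T_f\phi\|_1\le\|\phi\|_1$, and there is no strict decrease of norm, so the deduction ``$\phi=T_f^n\phi\to 0$, hence $\phi\equiv 0$'' is unfounded (nonzero integrable fixed points do exist for flexible Lattes maps, which is exactly why Theorem~\ref{tworatintro} excludes exceptional maps). What the paper actually does with $H=\sum_{j}a_jH_j$ is use the equality case of the $L^1$ inequality on a set $U$ with $f^{-1}(U)\subset U$ and $U\setminus f^{-1}(U)$ open and nonempty to force $H=0$ on each non-Siegel component of ${\bf C}\setminus J$, treat Siegel disks separately, and then --- a step entirely absent from your sketch --- pass from ``$H=0$ off $J$'' to the vanishing of all coefficients $\alpha_k$ in $H(x)=\sum_k\alpha_k/(x-b_k)$, by viewing $\mu=\sum_k\alpha_k\delta(b_k)$ as a measure annihilating functions holomorphic near $J$ and invoking Vitushkin-type rational approximation on $J$ (every point of $J$ lies on the boundary of the basin of infinity). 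Second, even once $\sum_j a_jH_j=0$ is known coefficientwise, the conclusion ``all $a_j=0$'' is not immediate: $H_j$ depends only on the critical value $v_j$, so distinct summable critical points with $v_{j_1}=v_{j_2}$ give $H_{j_1}=H_{j_2}$, and the relation only yields vanishing of the group sums $\sum_{j:\,v_j=V_i}a_j$. The paper needs the dedicated Lemma~\ref{stranno}: a residue/pole analysis along a periodic critical orbit to show distinct functions $\tilde H_i$ are linearly independent, followed by recovering each $a_k=0$ from the original relations via the structure $L(c_j,v_k)=\delta_{j,k}+A(v_j,v_k)$ of Corollary~\ref{lemravnie}. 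Your phrase ``$\phi\equiv 0$ together with the construction forces all $\lambda_j=0$'' glosses over precisely this point, which is where the real difficulty of part (b) lies; the integrability and exactness of the fixed-point identity that you flag as the main obstacles are, in the polynomial case, the routine parts.
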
 
\begin{com}\label{onecoro}
Part (b) has the following geometric re-formulation.
Let 
$1\le k_1<...<k_r\le p$ be indexes, for which
determinant of the square matrix 
$(L(c_j, v_{k_i}))_{1\le j\le r, 1\le i\le r}$ is non-zero.
We define a local $r$-dimensional submanifold $\Lambda$
as the set of all $g\in \Pi_{d, \bar p}$ such that
$v_i(g)=0$ for every $i\not=k_1,k_2,....,k_r$.
Consider a $1$-dimensional
family (curve) of maps $f_t\in \Lambda$ through $f$, 
such that $f_t(z)=f(z)+t u(z)+O(|t|^2)$ as $t\to 0$. 
If $u\not=0$, then,
for at least one weakly expanding critical point $c_j$ of $f$,
\begin{equation}\label{family}
\lim_{m\to \infty}\frac{\frac{d}{dt}|_{t=0}f_t^m (c_j(t))}
{(f^{m-1})'(v_j)}\not=0.
\end{equation}
Here $c_j(t)$ is 
a holomorphic function with $c_j(0)=c_j$, so that $c_j(t)$
is a critical point of $f_t$,
and $v_j=f(c_j)$.

In particular, if all critical points of $f$ are {\it simple}
and {\it weakly expanding},
then (\ref{family}) holds for every curve $f_t$
in $\Pi_d$ through $f$ with a non-degenerate tangent vector at $f$.

Furthermore, if $f(0)$ and all the critical points of $f$ are real, the above
maps and spaces can be taken real.
\end{com}
Indeed, let $v_k(t)$ be 
the critical value of $f_t$, such that
$v_k(0)=v_k$. Denote $a_k=v'_k(0)$.
As $f_t\in \Lambda$,
$u(z)=\sum_{i=1}^r a_{k_i} \frac{\partial f}{\partial v_{k_i}}(z)$,
where at least one of $a_{k_i}$ must be non zero. On the other hand, 
the limit $R_j$ in~(\ref{family}) can be represented as
$R_j=\sum_{k=1}^p a_k L(c_j, v_k)$.
Therefore, $R_j=\sum_{i=1}^r a_{k_i} L(c_j, v_{k_i})$, 
where at least one of $a_{k_i}$ is not zero.
Now, if we assume that $R_j=0$ for every $1\le j\le r$, then the matrix 
$(L(c_j, v_{k_i}))_{1\le j\le r, 1\le i\le r}$ degenerates, which is a 
contradiction.
\subsection{Proof of Part (a) of Theorem~\ref{one}}\label{pra}
\paragraph{Dependence on the local coordinates.}

Suppose that a polynomial or a rational function $f$
is included
in a space $N$ of polynomials or rational functions $g$
with coordinates 
$\bar x(g)=\{x_1(g),...,x_l(g)\}$, see Comment~\ref{nota}.
Fix a critical point $c\in {\bf C}$ 
of $f$ of multiplicity $m\ge 1$, and 
let $c(g)$ be the critical point of $g$ of the multiplicity $m$ 
which is close to $c$, if $g$ is close to $f$.
Consider also the corresponding critical value
$v(g)=g(c(g))$. Recall that, according to our convention,
$\frac{\partial f}{\partial x_k}$, $\frac{\partial f(a)}{\partial x_k}$,
$\frac{\partial v}{\partial x_k}$, etc.
mean respectively 
$\frac{\partial g}{\partial x_k}|_{g=f}$, 
$\frac{\partial g(a)}{\partial x_k}|_{g=f}$,
$\frac{\partial v(g)}{\partial x_k}|_{g=f}$, etc.
\begin{prop}\label{dergenlem}
Assume that $c$ and $v=f(c)$ lie in ${\bf C}$.
Then the function
\begin{equation}
\frac{\partial f}{\partial x_k}(z)-\frac{\partial v}{\partial x_k}
\end{equation}
has at $z=c$ a 
zero of multiplicity
at least $m$. 
\end{prop}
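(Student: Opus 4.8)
The plan is to differentiate the defining relation $g'(c(g)) = 0$ and the relation $v(g) = g(c(g))$ with respect to the coordinate $x_k$, and then expand $g$ near the critical point $c$ to read off the order of vanishing. Write $h = h_{x_k} := \frac{\partial g}{\partial x_k}\big|_{g=f}$, so $h$ is a polynomial (or rational function regular near $c$), and let $\dot c := \frac{\partial c}{\partial x_k}$ and $\dot v := \frac{\partial v}{\partial x_k}$. Since $c$ is a critical point of $f$ of multiplicity $m$, we have $f(z) = v + b(z-c)^{m+1} + O((z-c)^{m+2})$ near $c$, with $b \neq 0$; equivalently $f'(z) = b(m+1)(z-c)^m + O((z-c)^{m+1})$.

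First I would differentiate $v(g) = g(c(g))$ at $g = f$: by the chain rule and our convention, $\dot v = h(c) + f'(c)\,\dot c = h(c)$, because $f'(c) = 0$. Thus $\dot v = h(c)$, i.e. the value of $\frac{\partial f}{\partial x_k}(z) - \frac{\partial v}{\partial x_k}$ at $z = c$ is $h(c) - h(c) = 0$ — this already gives a zero of order at least $1$, handling $m = 1$. For the general case, the key is to use that $c(g)$ remains a critical point of multiplicity $m$ for $g$ near $f$, which means $g'$ has a zero of order $m$ at $c(g)$. Differentiating this condition: the Taylor coefficients of $g'$ at $c(g)$ up to order $m-1$ vanish identically in $g$, so their $x_k$-derivatives at $f$ vanish. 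Concretely, for $j = 0, 1, \dots, m-1$, write $\frac{1}{j!}(g')^{(j)}(c(g)) \equiv 0$; differentiating in $x_k$ at $g = f$ gives $\frac{1}{j!}h'^{(j)}(c) + \frac{1}{(j+1)!}f'^{(j+1)}(c)\,\dot c = 0$. For $j \le m-1$ we have $(j+1) \le m$, so $f'^{(j+1)}(c) = 0$, and we conclude $h'^{(j)}(c) = 0$ for $j = 0,\dots,m-1$; that is, $h'(z) = \frac{\partial f}{\partial x_k}'(z)$ has a zero of order at least $m$ at $z = c$.

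Now I would combine these: $\varphi(z) := \frac{\partial f}{\partial x_k}(z) - \dot v = h(z) - h(c)$ satisfies $\varphi(c) = 0$ and $\varphi'(z) = h'(z)$ has a zero of order $\ge m$ at $c$. Integrating, $\varphi$ has a zero of order $\ge m+1$ at $c$ — in particular of order at least $m$, as claimed. (Note we actually get one more order than stated; the weaker statement in the Proposition suffices for the applications.) I do not anticipate a serious obstacle here; the only point requiring a little care is justifying that the multiplicity-$m$ condition on $c(g)$ is a genuine family of analytic identities in $g$ that may be differentiated termwise — this is exactly the content of Comment~\ref{nota} and the analyticity of $c_j(g)$, and at a critical point in ${\bf C}$ the Taylor coefficients of $g'$ depend analytically on $g$ and on the (analytic) center $c(g)$, so the chain-rule differentiation is legitimate. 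One should also handle the rational case by working in a local chart where $c$ and $v$ are finite, which is assumed in the hypothesis $c, v \in {\bf C}$.
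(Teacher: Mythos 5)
There is a genuine error in your middle step, and it makes your final conclusion too strong. The multiplicity of $c(g)$ is \emph{exactly} $m$, so $f'$ vanishes at $c$ to order exactly $m$: $(f')^{(i)}(c)=0$ for $i\le m-1$, but $(f')^{(m)}(c)\neq 0$. When you differentiate the identities $(g')^{(j)}(c(g))\equiv 0$, $j=0,\dots,m-1$, the relation you obtain at $g=f$ is $(h')^{(j)}(c)+(f')^{(j+1)}(c)\,\dot c=0$ (the $\frac{1}{(j+1)!}$ in your formula is a slip, but harmless), and the second term drops out only for $j\le m-2$; at $j=m-1$ it is $(f')^{(m)}(c)\,\dot c$, which in general does not vanish. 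So what you actually get is $h^{(i)}(c)=0$ for $i=1,\dots,m-1$, i.e. $h'=\frac{\partial f'}{\partial x_k}$ vanishes at $c$ to order at least $m-1$ (not $m$), while $h^{(m)}(c)=-f^{(m+1)}(c)\,\dot c$ is generically non-zero. Combined with $\varphi(c)=h(c)-\dot v=0$ this gives $\varphi$ a zero of order at least $m$ — which is exactly the Proposition — but \emph{not} $m+1$. Your parenthetical "we actually get one more order than stated" is false: e.g. for monic centered cubics with simple critical points $c_1\neq c_2$, the function $\frac{\partial f}{\partial v_1}(z)$ is a polynomial of degree at most $1$ with $\frac{\partial f}{\partial v_1}(c_1)=1$, $\frac{\partial f}{\partial v_1}(c_2)=0$ (your own first-order computation), hence equals $(z-c_2)/(c_1-c_2)$, and $\frac{\partial f}{\partial v_1}(z)-1=(z-c_1)/(c_1-c_2)$ has only a simple zero at $c_1$. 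The order $m$ in the Proposition is sharp whenever $\frac{\partial c}{\partial x_k}\neq 0$.

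With the index range corrected ($j\le m-2$ only), your argument closes and is essentially the paper's proof in differentiated form: the paper writes $\frac{\partial f}{\partial x_k}(z)-\frac{\partial v}{\partial x_k}=\int_{c}^{z}\frac{\partial f'}{\partial x_k}(w)\,dw$ (using $f'(c)=0$ to kill the $\frac{\partial c}{\partial x_k}f'(c)$ term, as you do), and then uses that $\frac{\partial f'}{\partial x_k}$ has a zero of order at least $m-1$ at $c$ — coming from the factorization $g'(z)=(z-c(g))^{m}\,r_g(z)$ — so that the integral vanishes to order at least $m$. So the approach is right; the defect is the single incorrect assertion that $(f')^{(j+1)}(c)=0$ for $j+1=m$, and the unjustified strengthening it produces.
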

\begin{proof}
Fix a point $a\in {\bf C}$ so that $f(a)\not=\infty$.
For every $g$ in the space $N$ which is close to $f$,
we may write:
\begin{equation}\label{1rat}
g(z)=g(a)+\int_{a}^z g'(w) dw,
\end{equation}
and this holds for every $z$ in the plane with $g(z)\not=\infty$.
Hence, 
\begin{equation}\label{2rat}
\frac{\partial f}{\partial x_k}(z)=\frac{\partial f(a)}{\partial x_k}+
\int_{a}^z \frac{\partial f'}{\partial x_k}(w) dw.
\end{equation}
On the other hand,
\begin{equation}\label{3rat}
v(g)=g(a)+\int_{a}^{c(g)} g'(w) dw.
\end{equation}
Therefore,
\begin{equation}\label{4rat}
\frac{\partial v}{\partial x_k}=\frac{\partial f(a)}{\partial x_k}+
\frac{\partial c}{\partial x_k} f'(c)+
\int_{a}^{c} \frac{\partial f'}{\partial x_k}(w) dw=
\frac{\partial f(a)}{\partial x_k}+
\int_{a}^{c} \frac{\partial f'}{\partial x_k}(w) dw.
\end{equation}
Comparing this with~(\ref{2rat}), we have:
\begin{equation}\label{24rat}
\frac{\partial f}{\partial x_k}(z)-\frac{\partial v}{\partial x_k}
=
\int_{c}^z \frac{\partial f'}{\partial x_k}(w) dw.
\end{equation}
As $c(g)$ is an $m$-multiple root of $g'$,
we get:
$$\frac{\partial f'}{\partial x_k}=(z-c)^{m-1}r(z),$$
where $r(z)$ is a holomorphic function near $c$.
Hence, as $z\to c$,
\begin{equation}\label{24'rat}
\frac{\partial f}{\partial x_k}(z)-\frac{\partial v}{\partial x_k}
=(z-c)^{m}\frac{r(c)}{m}+O(z-c)^{m+1}.
\end{equation}
This proves the statement.
\end{proof}
Now, we let $N=\Pi_{d, \bar p}$ and calculate the partial derivatives 
of a function $f\in\Pi_{d, \bar p}$ w.r.t. the local coordinates.
\begin{prop}\label{derlem}
For every $k=1,...,p$, the function
$\frac{\partial f}{\partial v_k}(z)$ is a 
polynomial $p_k(z)$
of degree at most $d-2$ which is uniquely characterized by the
following condition: $p_k(z)-1$ has zero at $c_k$ of order
at least $m_k$, while for every $j\not=k$,
$p_k(z)$ has zero at $c_j$ of order at least $m_j$.
In particular, $\frac{\partial f}{\partial v_k}(c_j)=\delta_{j,k}$
(here and later on we use the notation
$\delta_{j,k}=1$ if $j=k$ and $\delta_{j,k}=0$ if $j\not=k$);
if $c_k$ is simple (i.e., $m_k=1$),
then
\begin{equation}\label{dersimp}
\frac{\partial f}{\partial v_k}(z)=\frac{f'(z)}{f''(c_k)(z-c_k)}.
\end{equation}
\end{prop}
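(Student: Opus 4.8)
The plan is to compute $\frac{\partial f}{\partial v_k}(z)$ directly using the fact that $V(g) = \{v_1(g),\dots,v_p(g)\}$ is a local analytic coordinate on $\Pi_{d,\bar p}$ (Proposition~\ref{local}). First I would observe that since $g \in \Pi_{d,\bar p}$ is a monic centered polynomial of degree $d$, the function $\frac{\partial g}{\partial v_k}(z)$, evaluated at $g = f$, is itself a polynomial in $z$; moreover, differentiating $g(z) = z^d + a_1(g) z^{d-2} + \dots + a_{d-1}(g)$ with respect to $v_k$ kills the leading term $z^d$ and the (absent) $z^{d-1}$ term, so $p_k(z) := \frac{\partial f}{\partial v_k}(z)$ has degree at most $d-2$. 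This handles the degree claim.

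Next I would extract the interpolation conditions at the critical points. Apply Proposition~\ref{dergenlem} with $N = \Pi_{d,\bar p}$ and the chosen coordinate system $\bar x = V$, using the critical point $c = c_j$ of multiplicity $m_j$: it gives that $p_j(z) - \frac{\partial v_j}{\partial v_k}$ has a zero of order at least $m_j$ at $z = c_j$. Since $V$ is a coordinate, $\frac{\partial v_j}{\partial v_k} = \delta_{j,k}$. Thus for $j = k$, the polynomial $p_k(z) - 1$ vanishes to order at least $m_k$ at $c_k$, and for $j \neq k$, the polynomial $p_k(z)$ vanishes to order at least $m_j$ at $c_j$. In particular $p_k(c_j) = \delta_{j,k}$.

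Then I would check that these conditions pin down $p_k$ uniquely. The total number of vanishing conditions is $\sum_{j \neq k} m_j + m_k = \sum_{j=1}^p m_j = d - 1$, since $f'(z) = d\prod_{i=1}^p (z - c_i)^{m_i}$ has degree $d - 1$. A polynomial of degree at most $d - 2$ has $d - 1$ coefficients, so the conditions determine it uniquely \emph{provided} the associated linear system is nonsingular; but a nonzero polynomial $q$ of degree $\le d-2$ satisfying all the homogeneous conditions (order $\ge m_j$ at each $c_j$, $j \ne k$, and order $\ge m_k$ at $c_k$) would be divisible by $\prod_{i=1}^p (z-c_i)^{m_i}$, a polynomial of degree $d - 1 > d - 2$, forcing $q \equiv 0$. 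Hence $p_k$ is the unique solution, and existence follows because we have already exhibited one via the derivative. Finally, for the case $m_k = 1$ with all relevant multiplicities, I would verify formula~(\ref{dersimp}): the candidate $\frac{f'(z)}{f''(c_k)(z-c_k)}$ is a polynomial of degree $d - 2$ (since $f'$ has a simple zero at $c_k$), vanishes at each $c_j$, $j \neq k$, to order $m_j$ (because $f'$ does), and at $c_k$ takes the value $\lim_{z\to c_k}\frac{f'(z)}{f''(c_k)(z-c_k)} = \frac{f''(c_k)}{f''(c_k)} = 1$, so by uniqueness it equals $p_k$.

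The only real subtlety — and the step I would be most careful about — is the passage from ``$\frac{\partial g}{\partial v_k}$ is a polynomial of degree $\le d-2$'' plus ``$d-1$ interpolation conditions'' to ``uniquely determined,'' since naively counting conditions is not enough; the divisibility argument above (a nonzero low-degree polynomial cannot absorb $\prod(z-c_i)^{m_i}$) is what makes it rigorous, and it also silently upgrades the ``at least $m_j$'' vanishing orders in Proposition~\ref{dergenlem} to exactly $m_j$. Everything else is bookkeeping on degrees and the already-established fact that $V$ is a coordinate chart.
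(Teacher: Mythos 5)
Your proof is correct and follows essentially the same route as the paper: deduce that $\frac{\partial f}{\partial v_k}$ is a polynomial of degree at most $d-2$ from the fact that the coefficients of $g$ depend holomorphically on the coordinate $V(g)$ and $g$ is monic and centered, then apply Proposition~\ref{dergenlem} with $\bar x=V$ and $\frac{\partial v_j}{\partial v_k}=\delta_{j,k}$ to get the vanishing conditions. The only difference is that you spell out the uniqueness (divisibility by $\prod_i(z-c_i)^{m_i}$ of degree $d-1$) and the verification of~(\ref{dersimp}), which the paper leaves implicit; both of these details are correct.
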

\begin{proof}
Since the coefficients of $g\in \Pi_{d, \bar p}$
are holomorphic functions of $V(g)$ and $g$ is centered, 
the function $\frac{\partial f}{\partial v_k}(z)$ is indeed a polynomial
in $z$ of degree at most $d-2$. 
Hence, it is enough to check that it satisfies the characteristic property
of the polynomial $p_k(z)$. 
But since $\frac{\partial v_j(g)}{\partial v_k}=\delta_{j,k}$,
this is a direct corollary of Proposition~\ref{dergenlem}
applied for the coordinate $\bar x=V$ and the critical point 
$c(g)=c_j(g)$.
\end{proof}

\paragraph{Proof of Theorem~\ref{one}, Part (a).}
The following identity is easy to verify:
\begin{equation}\label{5}
\frac{\partial f^m}{\partial v_k}(z)=
(f^m)'(z)\sum_{n=0}^{m-1}\frac{\frac{\partial f}{\partial v_k}(f^n(z))}
{(f^{n+1})'(z)}.
\end{equation}
Letting here $z\to c_j$, one gets:
\begin{equation}\label{6}
\frac{\partial f^m}{\partial v_k}(c_j)=
(f^{m-1})'(v_j)\left\{\frac{\partial f}{\partial v_k}(c_j)+
\sum_{n=1}^{m-1}\frac{\frac{\partial f}{\partial v_k}(f^{n-1}(v_j))}
{(f^{n})'(v_j)}\right\}.
\end{equation}
As we know, $\frac{\partial f}{\partial v_k}(c_j)=\delta_{j,k}$.
Besides, $\frac{\partial f}{\partial v_k}(z)=f'(z) L_k(z)=p_k(z)$
is a polynomial
of degree at most $d-2$. Hence, for some constant $C_k$ and all $z$,
\begin{equation}\label{bdd}
|\frac{\partial f}{\partial v_k}(z)|\le C_k (1+|z|^{d-2}).
\end{equation}
Now, assume that $c_j$ is weakly expanding.
As $c_j\in J$, the sequence $\{f^n(v_j)\}_{n\ge 0}$ is
uniformly bounded. Then~(\ref{bdd}) 
and the summability condition~(\ref{summ}) imply that the series
$\sum_{n=1}^{\infty}
\frac{\frac{\partial f}{\partial v_k}(f^{n-1}(v_j))}{(f^{n})'(v_j)}$
converges absolutely. 
Thus we have:
\begin{equation}\label{serl}
L(c_j, v_k)=\lim_{m\to \infty}\frac{\frac{\partial f^m}{\partial v_k}(c_j)}
{(f^{m-1})'(v_j)}=\delta_{j,k}+\sum_{n=1}^{\infty}
\frac{\frac{\partial f}{\partial v_k}(f^{n-1}(v_j))}{(f^{n})'(v_j)}.
\end{equation}
This ends the proof of Part (a) of Theorem~\ref{one}.

The following corollary is immediate from~(\ref{serl}),
\begin{coro}\label{lemravnie}
\begin{equation}\label{lcoro}
L(c_j, v_k)=\delta_{j,k}+A(v_j, v_k),
\end{equation}
for some function $A(x,y)$. 
\end{coro}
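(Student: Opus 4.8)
The plan is to obtain the corollary directly from the series representation~(\ref{serl}) already established in the proof of Part~(a); it is a matter of naming, not of new work. Concretely, for a weakly expanding critical point $c_j$ the identity~(\ref{serl}) reads
$$L(c_j,v_k)=\delta_{j,k}+\sum_{n=1}^{\infty}\frac{\frac{\partial f}{\partial v_k}(f^{n-1}(v_j))}{(f^{n})'(v_j)},$$
so I would simply set $A(v_j,v_k)$ equal to the tail series on the right and observe that it defines $A$ as a function of the two arguments $v_j$ and $v_k$ only.

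To justify the last point I would unpack what enters the general term. By Proposition~\ref{derlem} the map $z\mapsto\frac{\partial f}{\partial v_k}(z)=p_k(z)$ is a fixed polynomial attached to the $k$-th critical point, carried by the $k$-th critical-value coordinate in the parametrization of $\Pi_{d,\bar p}$ by $V=\{v_1,\dots,v_p\}$ (Proposition~\ref{local}); and the orbit points $f^{n-1}(v_j)$ together with the multipliers $(f^{n})'(v_j)$ depend on $v_j$ alone. Hence the summand, and therefore the whole series, is a function of the pair $(v_j,v_k)$, so~(\ref{lcoro}) is literally~(\ref{serl}) rewritten with this notation.

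There is essentially no obstacle here: the only input that is not purely formal is that the defining series for $A$ converges, and this is exactly the estimate already carried out in Part~(a), where the bound~(\ref{bdd}) together with the summability condition~(\ref{summ}) and the boundedness of $\{f^n(v_j)\}$ (valid since $c_j\in J$) give absolute convergence. Accordingly I would present the corollary with a one-line proof pointing back to~(\ref{serl}) and to that convergence bound.
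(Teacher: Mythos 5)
Your proposal is correct and is essentially the paper's own argument: the corollary is read off from~(\ref{serl}) by naming the tail series $A(v_j,v_k)$, with convergence already secured by~(\ref{bdd}) and~(\ref{summ}). The only point worth noting is that what is actually needed later (and what your observation delivers) is that the tail depends on $j$ only through the value $v_j$; the dependence on the second slot is really through the index $k$ via the polynomial $p_k$, exactly as in the paper's loose notation.
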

\subsection{The Ruelle operator and an operator identity.}\label{sectruelle}
\paragraph{The operator.}

As in the proof of Corollary 1(b) of~\cite{Le},
the main tool for us is 
the following linear operator $T_f$ accosiated
to a rational function $f$, which acts on functions 
as follows:
$$T_f\psi(x)=\sum_{w:f(w)=x}\frac{\psi(w)}{(f'(w))^2},$$
provided $x$ is not a critical value of $f$.

Next statement is about an arbitrary rational
function which fixes infinity.
\begin{lem}\label{formula}
Let $f$ be any rational function so that $f(\infty)=\infty$. 
Let $c_j$, $j=1,...,p$ be all
geometrically different critical points of $f$ lying in the 
complex plane ${\bf C}$, such
that
the corresponding critical values $v_j=f(c_j)$, $j=1,...,p$,
are also in ${\bf C}$. Denote by $m_j$ the multiplicity
of $c_j$, $j=1,...,p$.
Then there are functions $L_1(z),...,L_p(z)$
as follows. For every $z\in {\bf C}$, which is not a critical point of $f$,
for every $x\in {\bf C}$ which is not a critical value of $f$ and such
that $x\not=f(z)$, we have:
\begin{equation}\label{rcauchy1}
T_f\frac{1}{z-x}:=
\sum_{y: f(y)=x}\frac{1}{f'(y)^2}\frac{1}{z-y}=
\frac{1}{f'(z)}\frac{1}{f(z)-x}+
\sum_{j=1}^{p}\frac{L_j(z)}{x-v_j}.
\end{equation}
Furthermore, each function $L_j$ obeys the following two properties:

(1) $L_j$ is a meromorphic function in the complex plane of the form:
\begin{equation}\label{tochno}
L_j(z)=\sum_{i=1}^{m_j}\frac{q^{(j)}_{m_j-i}}{(z-c_j)^i} \ , \ \ \ \ \ \ \
q^{(j)}_{m_j-i}\in {\bf C},
\end{equation}
(2) for every $k=1,...,p$,
the function $f'(z)L_j(z)-\delta_{j,k}$ has zero at the point $c_k$  
of order at least $m_j$.
\end{lem}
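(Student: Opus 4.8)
The plan is to prove Lemma~\ref{formula} by an explicit computation based on the partial-fractions structure of the rational function $y\mapsto 1/(f'(y)^2(z-y))$ viewed as a function of $y$ on the sphere, summed over the fiber $f^{-1}(x)$. First I would fix generic $z$ and $x$ (so that $z$ is not a critical point, $x$ is not a critical value, and $x\ne f(z)$) and consider the function
$$
\Phi(y) = \frac{1}{f'(y)^2}\,\frac{1}{z-y}.
$$
The key observation is that $T_f\frac1{z-x}$, as a function of $x$, is a rational function of $x$ whose poles can only occur where the fiber $f^{-1}(x)$ degenerates, i.e. at the critical values $v_1,\dots,v_p$, together with a pole at $x=f(z)$ coming from the term $y=z$ (note $z\in f^{-1}(f(z))$). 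So the strategy is: show the left side, as a function of $x$, extends to a rational function on $\bar{\C}$; locate and compute its principal parts at $x=f(z)$ and at each $x=v_j$; check it vanishes at $x=\infty$; and conclude by Liouville that it equals the asserted sum of partial fractions.

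The main steps, in order. \textbf{Step 1:} Establish that $G(x):=T_f\frac1{z-x}$ is rational in $x$. One clean way is to use the residue/symmetric-function description: $\sum_{f(y)=x}\Phi(y)$ is a symmetric function of the roots of $f(y)-x=0$, hence rational in $x$; alternatively, differentiate the identity $\sum_{f(y)=x}\frac{1}{f'(y)}\cdot\frac{1}{z-y}$ — but it is cleanest to invoke the global residue theorem on $\bar\C$ for the meromorphic $1$-form $\omega = \frac{dy}{f'(y)(z-y)(f(y)-x)}$, whose residues sum to zero. \textbf{Step 2:} Compute the contribution at the ``regular'' point $y=z$: since $f(z)-x$ has a simple zero at $x=f(z)$ with derivative (in $x$) equal to $-1$ while $y$ runs over the fiber, the residue bookkeeping produces exactly the term $\frac{1}{f'(z)}\frac{1}{f(z)-x}$. \textbf{Step 3:} Compute the principal part of $G(x)$ at $x=v_j$. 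Near $c_j$, write $f(y)-v_j = A(y-c_j)^{m_j+1}(1+o(1))$; the confluence of $m_j+1$ preimages of $x$ near $c_j$ as $x\to v_j$, weighted by $1/f'(y)^2 \sim (\text{const})(y-c_j)^{-2m_j}$, contributes a pole in $x$ at $v_j$. Carrying out the local Puiseux expansion and summing over the $m_j+1$ branches (the fractional powers cancel) yields a genuine pole in $x-v_j$ of some finite order; I would record that order is at most $m_j$ after the cancellation, and \emph{define} $L_j(z)$ to be $(x-v_j)\times$(that principal part) — more precisely, extract the expansion $\sum_{j}\frac{L_j(z)}{x-v_j}$ so that $L_j(z)$ is the residue-type coefficient; the shape $L_j(z)=\sum_{i=1}^{m_j}q^{(j)}_{m_j-i}(z-c_j)^{-i}$ comes from expanding this coefficient in $z$ near $z=c_j$ and noting it is a rational function of $z$ with a single pole at $c_j$ of order $\le m_j$ (its only $z$-singularity, since $\Phi(y)$'s only $z$-pole is at $y=z$ and that was already split off in Step 2). \textbf{Step 4:} Check behavior at $x=\infty$: as $x\to\infty$ all preimages $y\to\infty$ (using $f(\infty)=\infty$), and $1/f'(y)^2 \to 0$ fast enough that $G(x)\to 0$; similarly the right side $\to 0$. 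Hence both sides are rational in $x$, vanish at $\infty$, and have the same principal parts, so they agree — this gives~(\ref{rcauchy1}) and property (1).

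\textbf{Step 5 (property (2)):} Here I would multiply~(\ref{rcauchy1}) through and take a limit $x\to v_k$, or better, examine the $z$-dependence. Fix $k$ and let $z\to c_k$. On the left side, $T_f\frac1{z-x}$ has a well-understood behavior because $c_k\in f^{-1}(v_k)$: the branch $y\to c_k$ in the fiber over $x$ near $v_k$ interacts with the $\frac{1}{z-y}$ factor. The cleanest route is to recognize $f'(z)L_j(z)$ as (up to the split-off term) the function governing how the ``$v_j$-pole'' of the $x$-rational function moves, and then use that $T_f$ applied to $\frac{1}{z-x}$ must reproduce, at $z=c_k$, a consistent principal part at $x=v_k$ — forcing $f'(z)L_j(z)-\delta_{j,k}$ to vanish at $c_k$. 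Concretely: plug $z\to c_k$ into~(\ref{rcauchy1}), use $f'(c_k)=0$, and match the order-$m_j$ vanishing of $f'(z)$ at $c_k$ (times $L_j$, which has a pole of order $\le m_j$ at $c_j$, hence is regular at $c_k$ if $j\ne k$) against the $\frac{1}{f'(z)}$-term on the right, which blows up; the bookkeeping of orders of vanishing yields exactly ``$f'(z)L_j(z)-\delta_{j,k}$ has a zero of order $\ge m_j$ at $c_k$.''

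The hard part will be \textbf{Step 3}: correctly carrying out the Puiseux/confluence analysis at a critical point of multiplicity $m_j>1$, tracking how the $m_j+1$ colliding preimages contribute, checking that the fractional-power terms cancel when summed over branches, and pinning down that the resulting pole order in $x-v_j$ is no larger than what is needed and that the coefficient, as a function of $z$, has the precise form~(\ref{tochno}). A secondary subtlety is making Step 1 fully rigorous without circularity — I would lean on the global residue theorem on $\bar\C$ rather than an ad hoc symmetric-function argument, since that simultaneously delivers the rationality in $x$, the $\infty$-vanishing, and organizes the principal-part computation. Properties (1)–(2) and the precise pole orders should then drop out of the local expansions essentially for free, mirroring how Proposition~\ref{dergenlem} extracts zero orders from integral representations.
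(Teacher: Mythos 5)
Your overall strategy (rationality in $x$, principal parts, Liouville) is workable, but as written it has a genuine gap exactly at the step you yourself flag as the hard one. To obtain~(\ref{rcauchy1}) you must show that the pole of $T_f\frac{1}{z-x}$ in $x$ at each critical value $v_j$ is \emph{simple}, with coefficient $L_j(z)$ of the precise form~(\ref{tochno}). In Step 3 you only record that the pole in $x-v_j$ has ``order at most $m_j$'', which is both insufficient and, for $m_j\ge 2$, incompatible with the subsequent move: defining $L_j(z)$ as $(x-v_j)\times$(principal part) and extracting an expansion $\sum_j \frac{L_j(z)}{x-v_j}$ only makes sense if the pole has order $1$; otherwise that expression still depends on $x$ and the identity in the asserted form is not established. (The simplicity is in fact true and could be rescued inside your framework, e.g.\ by noting that the $m_j+1$ colliding branches each contribute $O(|x-v_j|^{-2m_j/(m_j+1)})$, and $2m_j/(m_j+1)<2$ forces pole order $\le 1$ for the single-valued sum --- but no such argument appears in your plan.) Likewise, the statement that the coefficient, as a function of $z$, has its only pole at $c_j$, of order $\le m_j$ and with no constant term (i.e.\ exactly~(\ref{tochno})) is asserted in a parenthesis rather than derived; property (2), in the form actually needed later (the explicit relation between the Laurent coefficients of $L_j$ and $1/r_j$ at $c_j$, or at least the vanishing of $f'L_j-1$ to order $m_j$ at $c_j$), also hinges on having (1) in this precise form first.

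The paper's proof avoids the confluence/Puiseux analysis altogether, and you brush against it in your Step 1 without exploiting it: holding $x$ fixed away from the critical values and applying the residue theorem (via $\int_{|w|=R}\frac{dw}{f'(w)(f(w)-x)(w-z)}=0$ for large $R$) gives the identity directly, because the residue at $w=c_j$ is computed by an ordinary Laurent expansion --- since $f(w)-x=(v_j-x)+O((w-c_j)^{m_j+1})$ with $v_j-x\neq 0$, the correction terms are holomorphic and the residue is exactly $\frac{L_j(z)}{x-v_j}$ with $L_j$ built from the Taylor coefficients of $1/r_j$, $f'(w)=(w-c_j)^{m_j}r_j(w)$. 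This yields the simple pole in $x$, the form~(\ref{tochno}), and property (2) (for $k=j$ via $f'L_j=r_j\cdot(\mbox{truncation of }1/r_j)=1-O((z-c_j)^{m_j})$) in one computation. Your Step 5 idea --- matching principal parts in $z$ at $c_k$ on both sides of~(\ref{rcauchy1}), which identifies $L_j$ with the principal part of $1/f'$ at $c_j$ --- is a fine alternative for (2), but it only works once (1) is proved in its exact form; minor additional slips (e.g.\ in Step 4, as $x\to\infty$ the preimages accumulate on \emph{all} poles of $f$, not only at $\infty$, though each term still tends to $0$) are harmless but should be fixed.
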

\begin{proof}
Fixing $z,x$ as in the lemma, take $R$ big enough and
consider the integral
\begin{eqnarray*}
I=\frac{1}{2\pi i}\int_{|w|=R}\frac{dw}{f'(w)(f(w)-x)(w-z)}.
\end{eqnarray*}
As the integrant is $O(1/w^2)$ at infinity, $I=0$.
On the other hand, applying
the Residue Theorem,
$$I=-T_f\frac{1}{z-x}+\frac{1}{f'(z)}\frac{1}{f(z)-x}+
\sum_{j=1}^{p}I_j(z,x).$$
Here
$$I_j(z,x)=\frac{1}{2\pi i}\int_{|w-c_j|=\epsilon}
\frac{dw}{f'(w)(f(w)-x)(w-z)}.$$
Near $c=c_j$, $f'(w)=(w-c)^{m}r(w)$, where
$m=m_j$ and $r=r_j$ is holomorphic with $r(c)\not=0$.
Denote
$$\frac{1}{r(w)}=\sum_{k=0}^\infty q_k(w-c)^k,$$
where $q_k=q_k^{(j)}$ and 
$q_0=q^{(j)}_0=1/r(c)\not=0$.
We can write:
\begin{eqnarray*}
\frac{1}{f'(w)(f(w)-x)(w-z)} &=& \\ 
\frac{1}{(w-c)^m
r(w)((f(c)-x)+O((w-c)^{m+1}))(w-z)} &=& \\
\frac{1}{r(w)(f(c)-x)(w-z)}\frac{1}{(w-c)^m}+O(w-c) &=& \\
\frac{1}{x-f(c)}\sum_{k=0}^\infty q_k(w-c)^{k-m}\sum_{n=0}^\infty
\frac{(w-c)^n}{(z-c)^{n+1}}+O(w-c).
\end{eqnarray*}
We see from here that
$I_j(z,x)=L_j(z)/(x-f(c_j))$, where
$L_j(z)$ has precisely the form (\ref{tochno}).
Now, consider $\tilde L(z)=f'(z)L_j(z)$. By~(\ref{tochno}),
$\tilde L(z)$
has zero at every $c_k\not=c_j$ of order at least $m_k$.
On the other hand, as $z\to c_j$, then
$$ f'(z)L_j(z)=r_j(z)\left [ q^{(j)}_0+q^{(j)}_1(z-c_j)+...
+q^{(j)}_{m_j-1}(z-c_j)^{m_j-1} \right ]=$$
$$r_j(z)\left [\frac{1}{r_j(z)}-
\sum_{k=m_j}^\infty q^{(j)}_k(z-c_j)^k\right ]=
1-(z-c_j)^{m_j} g(z),$$
where $g$ is holomorphic near $c_j$.
This finishes the proof of the property (2).
\end{proof}
As a simple corollary of the last two statements we have:
\begin{prop}\label{param}
Let $f\in \Pi_{d,\bar p}$. Then
\begin{equation}\label{parameq}
T_f\frac{1}{z-x}=
\frac{1}{f'(z)}\frac{1}{f(z)-x}+
\sum_{k=1}^{p}\frac{\frac{\partial f}{\partial v_k}(z)}{f'(z)}
\frac{1}{x-v_k}.
\end{equation}
\end{prop}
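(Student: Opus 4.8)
The plan is to recognize~(\ref{parameq}) as the specialization of identity~(\ref{rcauchy1}) to the case $f\in\Pi_{d,\bar p}$, once we identify the meromorphic functions $L_j$ produced by Lemma~\ref{formula} with $\frac{\partial f}{\partial v_j}(z)/f'(z)$. Since any monic centered polynomial fixes $\infty$, and its finite critical points $c_1,\dots,c_p$ have finite critical values $v_j=f(c_j)$, Lemma~\ref{formula} applies and yields functions $L_1,\dots,L_p$ of the form~(\ref{tochno}) satisfying property~(2). So the whole task reduces to showing $f'(z)L_j(z)=\frac{\partial f}{\partial v_j}(z)$ for each $j$, and then substituting into~(\ref{rcauchy1}).

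First I would check that $Q_j(z):=f'(z)L_j(z)$ is a polynomial of degree at most $d-2$. By~(\ref{tochno}), the only pole of $L_j$ in ${\bf C}$ is at $c_j$, of order at most $m_j$, and since $f'(z)=d\prod_{i=1}^p(z-c_i)^{m_i}$ vanishes at $c_j$ to order exactly $m_j$, the product $Q_j$ is entire. Moreover $L_j(z)=O(1/z)$ and $f'(z)=O(z^{d-1})$ as $z\to\infty$, hence $Q_j(z)=O(z^{d-2})$, so $Q_j$ is a polynomial of degree $\le d-2$. Next I would match $Q_j$ against the characterization of $p_j=\frac{\partial f}{\partial v_j}$ in Proposition~\ref{derlem}: property~(2) of Lemma~\ref{formula} with $k=j$ says $Q_j(z)-1$ vanishes at $c_j$ to order $\ge m_j$, while for $k\ne j$ the factor $f'$ vanishes at $c_k$ to order $m_k$ and $L_j$ is holomorphic there, so $Q_j$ vanishes at $c_k$ to order $\ge m_k$. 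These are exactly the defining conditions of $p_j$; by the uniqueness clause of Proposition~\ref{derlem} we get $Q_j=p_j$, i.e. $L_j(z)=\frac{\partial f}{\partial v_j}(z)/f'(z)$. Substituting this into~(\ref{rcauchy1}) yields~(\ref{parameq}).

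I do not expect any genuine difficulty: the proposition is essentially a bookkeeping step combining Lemma~\ref{formula} and Proposition~\ref{derlem}. The only point deserving a moment's care is the degree/pole count showing that $f'(z)L_j(z)$ is a polynomial of degree $\le d-2$, since this is precisely what licenses the use of the uniqueness statement in Proposition~\ref{derlem}; after that the identification of $L_j$ is immediate.
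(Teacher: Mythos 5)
Your proposal is correct and follows essentially the same route as the paper: the paper's proof likewise identifies $f'(z)L_k(z)$, via property (1) of Lemma~\ref{formula} (which gives that it is a polynomial of degree at most $d-2$) and property (2), with the polynomial $p_k=\frac{\partial f}{\partial v_k}$ uniquely characterized in Proposition~\ref{derlem}, and then substitutes into~(\ref{rcauchy1}). Your version merely spells out the pole/degree count that the paper leaves implicit, so there is nothing to add.
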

\begin{proof}
By the property (1) of Lemma~\ref{formula}, $f'(z) L_k(z)$  
is a polynomial of degree at most $d-2$, which, by the property (2)
of Lemma~\ref{formula},
coincides with the polynomial $p_k(z)=\frac{\partial f}
{\partial v_k}(z)$ introduced
in Proposition~\ref{derlem}. Therefore, indeed, 
$L_k(z)=\frac{\frac{\partial f}{\partial v_k}(z)}{f'(z)}$.
\end{proof}
\paragraph{The operator identity and its corollary.}
\begin{prop}\label{main}
Let $f\in \Pi_{d, \bar p}$. We have (in formal series):
\begin{equation}\label{old}
\varphi_{z,\lambda}(x)-\lambda (T_f\varphi_{z,\lambda})(x)=\frac{1}{z-x}+
\lambda\sum_{k=1}^{p}\frac{1}{v_k-x}
\Phi_{k}(\lambda, z),
\end{equation}
where 
\begin{equation}\label{varphi}
\varphi_{z,\lambda}(x)=\sum_{n=0}^\infty\frac{\lambda^n}{(f^n)'(z)}
\frac{1}{f^n(z)-x},
\end{equation}
and
\begin{equation}\label{Psi}
\Phi_{k}(\lambda, z)=\sum_{n=0}^\infty \lambda^{n+1}
\frac{\frac{\partial f}{\partial v_k}(f^n(z))}{(f^{n+1})'(z)},
\end{equation}
for $\lambda$ complex parameter and $z,x$ complex variables.
\end{prop}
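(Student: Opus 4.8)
The plan is to verify the identity~(\ref{old}) by a direct formal computation, exploiting the defining formula of the Ruelle operator together with the parametrized Cauchy-type identity~(\ref{parameq}) from Proposition~\ref{param}. First I would substitute the series~(\ref{varphi}) for $\varphi_{z,\lambda}$ into the left-hand side and apply $T_f$ term by term. Since $T_f$ acts only on the $x$-variable, for each $n\ge 0$ we get $\lambda^{n+1}(f^n)'(z)^{-1}\,T_f\bigl(\tfrac{1}{f^n(z)-x}\bigr)$. Now invoke Proposition~\ref{param} with the point $f^n(z)$ in place of $z$: this rewrites $T_f\tfrac{1}{f^n(z)-x}$ as $\tfrac{1}{f'(f^n(z))}\tfrac{1}{f^{n+1}(z)-x}+\sum_{k=1}^p \tfrac{\partial f}{\partial v_k}(f^n(z))\,f'(f^n(z))^{-1}\,\tfrac{1}{x-v_k}$.

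Next I would collect the two resulting families of terms. The ``diagonal'' pieces $\lambda^{n+1}(f^n)'(z)^{-1}f'(f^n(z))^{-1}\tfrac{1}{f^{n+1}(z)-x}=\lambda^{n+1}(f^{n+1})'(z)^{-1}\tfrac{1}{f^{n+1}(z)-x}$ are exactly the terms of $\lambda\,\varphi_{z,\lambda}(x)$ with index $\ge 1$ — so adding back the $n=0$ term $\tfrac{1}{z-x}$ of $\varphi_{z,\lambda}$ produces, after moving $\lambda T_f\varphi_{z,\lambda}$ to the left, precisely $\varphi_{z,\lambda}(x)-\tfrac{1}{z-x}$ up to the contribution of the second family. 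The ``critical-value'' pieces sum to $\sum_{k=1}^p \tfrac{1}{x-v_k}\sum_{n=0}^\infty \lambda^{n+1}\tfrac{\partial f}{\partial v_k}(f^n(z))\,\bigl((f^n)'(z)f'(f^n(z))\bigr)^{-1}$, and since $(f^n)'(z)f'(f^n(z))=(f^{n+1})'(z)$ by the chain rule, the inner sum is exactly $\Phi_k(\lambda,z)$ as defined in~(\ref{Psi}); the sign $\tfrac{1}{x-v_k}=-\tfrac{1}{v_k-x}$ accounts for the form in which the statement is written. Reassembling gives~(\ref{old}).

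The main point requiring care — and the step I expect to be the real obstacle — is the legitimacy of interchanging $T_f$ with the infinite sum and of reindexing, i.e. making sense of everything ``in formal series'' in $\lambda$. I would treat $\varphi_{z,\lambda}$, $\Phi_k$, and $T_f\varphi_{z,\lambda}$ as formal power series in $\lambda$ with coefficients that are rational functions of $x$ (with poles only at the points $f^j(z)$ and $v_k$), so that $T_f$ acts coefficientwise and the identity is an equality of such formal series; then each coefficient of $\lambda^N$ is a finite sum and the manipulations above are purely algebraic. One should also note the mild genericity hypotheses implicit in applying Proposition~\ref{param} at each $f^n(z)$: $f^n(z)$ must avoid critical points and the $x$ chosen must avoid the critical values and the points $f^j(z)$ — but since the assertion is formal in $\lambda$ and these are equalities of rational functions, it suffices to check them off a proper analytic subset and invoke analytic continuation, exactly as in the corresponding argument of~\cite{LSY}.
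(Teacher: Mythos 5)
Your proposal is correct and follows essentially the same route as the paper: substitute the series~(\ref{varphi}) into the left-hand side, apply Proposition~\ref{param} at each point $f^n(z)$, use $(f^n)'(z)f'(f^n(z))=(f^{n+1})'(z)$, and collect the telescoping terms against the critical-value terms; your extra remarks on treating everything coefficientwise as formal series in $\lambda$ only make explicit what the paper leaves implicit. (Note that this computation yields the right-hand side exactly as in the last line of the paper's proof, i.e. $\frac{1}{z-x}+\sum_{k}\frac{1}{v_k-x}\Phi_k(\lambda,z)$ with the powers of $\lambda$ already absorbed into $\Phi_k$ as in~(\ref{Psi}); this normalization is what is used later at $\lambda=1$, so no discrepancy arises in the applications.)
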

\begin{proof}
We use Proposition~\ref{param} and write (in formal series):
$$\varphi_{z,\lambda}(x)-\lambda (T_f\varphi_{z,\lambda})(x)=$$
$$\sum_{n=0}^\infty\frac{\lambda^n}{(f^n)'(z)}\frac{1}{f^n(z)-x}-
\lambda\sum_{n=0}^\infty\frac{\lambda^n}{(f^n)'(z)}
\left\{\frac{1}{f'(f^n(z))}\frac{1}{f^{n+1}(z)-x}+
\sum_{k=1}^p \frac{\frac{\partial f}{\partial v_k}(f^n(v_j))}{f'(f^n(v_j))}
\frac{1}{x-v_k}\right\}$$
$$=\frac{1}{z-x}+
\sum_{k=1}^p \frac{1}{v_k-x}\sum_{n=0}^\infty \lambda^{n+1}
\frac{\frac{\partial f}{\partial v_k}(f^n(v_j))}{(f^{n+1})'(v_j)}.$$
\end{proof}
\begin{com}\label{iden}
If $c_k$ is a simple critical point,
$\Phi_{k}(\lambda, z)=\frac{\lambda}{f''(c_k)}\varphi_{z,\lambda}(c_k)$,
and with $\Phi_{k}(\lambda, z)$ in such form, equation
(\ref{old}) above as well as (\ref{oldrat}) from 
Proposition~\ref{mainrat} of the next Section
appeared for the first time
in~\cite{LSY} (where it is written
via the resolvent $(1-\lambda T_f)^{-1}$).
The only new (and crucial) ingredient of Proposition~\ref{main} (as well as
Proposition~\ref{mainrat}) is the
representation of the coefficients $\Phi_{k}(\lambda, z)$
in~(\ref{old}) via derivatives with respect to the local coordinates
in the appropriate space of maps.
\end{com}
Putting in Proposition~\ref{main} $\lambda=1$ and $z=v_j$ and
combining it with~(\ref{serl}), see the proof of Theorem~\ref{one} (a)
in Section~\ref{pra}, we get:
\begin{prop}\label{vaznoe}
Let $c_j$ be a summable critical point of $f\in \Pi_{d, \bar p}$.
Then
\begin{equation}\label{vazeq}
H_j(x)-(T_f H_j)(x)=\sum_{k=1}^{p}\frac{L(c_j, v_k)}{v_k-x},
\end{equation}
where 
\begin{equation}\label{hj}
H_j(x)=\sum_{n=0}^\infty\frac{1}{(f^n)'(v_j)(f^n(v_j)-x)},
\end{equation}
\end{prop}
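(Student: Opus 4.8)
The plan is to specialize the operator identity~(\ref{old}) of Proposition~\ref{main} to the values $\lambda=1$ and $z=v_j$, and then to identify the resulting coefficients on the right-hand side using the series formula~(\ref{serl}) for $L(c_j,v_k)$.

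First I would verify that, for $z=v_j$ with $c_j$ summable, the power series in $\lambda$ appearing on both sides of~(\ref{old}) converge \emph{absolutely} at $\lambda=1$ for suitable $x$. The $n$-th Taylor coefficient of $\varphi_{v_j,\lambda}(x)$ (see~(\ref{varphi})) is $\frac{1}{(f^n)'(v_j)(f^n(v_j)-x)}$; since the forward orbit $\{f^n(v_j)\}_{n\ge0}$ of a polynomial is bounded, the weight $\frac{1+|f^n(v_j)|^2}{1+|v_j|^2}$ in~(\ref{summ}) is bounded above and below, so $\sum_n 1/|(f^n)'(v_j)|<\infty$; hence for every $x$ outside the compact set $\overline{\{f^n(v_j)\}}$ the series converges absolutely and its sum is exactly $H_j(x)$ from~(\ref{hj}), i.e. $\varphi_{v_j,1}(x)=H_j(x)$. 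The same bound, together with the fact that $\frac{\partial f}{\partial v_k}$ is a polynomial and hence bounded on that bounded orbit, shows that $\Phi_k(1,v_j)$ in~(\ref{Psi}) is an absolutely convergent series.

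Next I would check that $T_f$ may be applied term by term at $\lambda=1$. This is the only genuinely analytic point: $T_fH_j(x)=\sum_{w:\,f(w)=x}H_j(w)/f'(w)^2$ is a \emph{finite} sum, so for $x$ avoiding $\overline{\{f^n(v_j)\}}$ and its preimage under $f$, each $H_j(w)$ is an absolutely convergent series, and one may interchange the finite sum over $w$ with the sum over $n$ to get $T_fH_j(x)=\sum_{n\ge0}\frac{1}{(f^n)'(v_j)}\,T_f\!\left[\frac{1}{f^n(v_j)-\,\cdot\,}\right]\!(x)$, matching the formal computation in the proof of Proposition~\ref{main}. Substituting $\lambda=1$, $z=v_j$ into~(\ref{old}) then yields
\[
H_j(x)-(T_fH_j)(x)=\frac{1}{v_j-x}+\sum_{k=1}^{p}\frac{\Phi_k(1,v_j)}{v_k-x}.
\]
Finally, reindexing~(\ref{Psi}) gives $\Phi_k(1,v_j)=\sum_{n\ge1}\frac{\frac{\partial f}{\partial v_k}(f^{n-1}(v_j))}{(f^{n})'(v_j)}$, which by~(\ref{serl}) equals $L(c_j,v_k)-\delta_{j,k}$; plugging this in, the $-\delta_{j,k}$ terms contribute $-\frac{1}{v_j-x}$, cancelling the leading term and leaving precisely $\sum_{k=1}^p\frac{L(c_j,v_k)}{v_k-x}$. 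Since both sides are meromorphic in $x$ and agree on an open set, the identity holds in general.

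The step I expect to be the main obstacle is the passage from the formal-power-series identity of Proposition~\ref{main} to an honest identity of functions at $\lambda=1$: one must delimit precisely the (open, nonempty) set of $x$ on which all the relevant series converge and on which $T_f$ acts termwise. This is exactly where summability of $c_j$ enters; everything else is reindexing and the comparison of $\Phi_k(1,v_j)$ with~(\ref{serl}).
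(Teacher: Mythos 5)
Your proof is correct and is essentially the paper's argument: the paper likewise obtains~(\ref{vazeq}) by setting $\lambda=1$, $z=v_j$ in Proposition~\ref{main} and combining with~(\ref{serl}), so that $\Phi_k(1,v_j)=L(c_j,v_k)-\delta_{j,k}$ and the $\delta_{j,k}$ term cancels $\frac{1}{v_j-x}$. The convergence and term-by-term application of $T_f$ that you spell out is exactly the justification the paper leaves implicit (it rests on the boundedness of the orbit and the summability of $c_j$, as in the proof of Theorem~\ref{one}(a)).
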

\subsection{Proof of Part (b) of Theorem~\ref{one}}\label{oneb}
The proof is very similar to the one of Corollary 1(b) of
\cite{Le},
where the family of unicritical polynomials is considered
(see also~\cite{mak},~\cite{mult} and references therein).
But some additional considerations are needed, if the 
number of summable critical points is more than one
and their critical values coincide.
Denote by $S$ the set of indexes of given collection
of $r$ summable critical points of $f$. 
Assume the contrary, i.e., the rank of the matrix ${\bf L}$ is less than 
$r$.
This holds if and only if there exist numbers $a_j$, for $j\in S$,
which are not all zeros, such that, for every $1\le k\le p$,
\begin{equation}\label{rankl}
\sum_{j\in S} a_j L(c_j, v_k)=0.
\end{equation}
Then, by Proposition~\ref{vaznoe}, 
\begin{equation}\label{hhj}
H=\sum_{j\in S} a_j H_j 
\end{equation} 
is an integrable fixed point of $T_f$ which is 
holomorphic in each component of the complement
${\bf C}\setminus J$. Let us show that $H=0$ off $J$.
We use that
$T_f$ is weakly contracting. Consider a component $\Omega$
of ${\bf C}\setminus J$.
If $\Omega$ is not a Siegel disk, then considering the backward orbit 
$\cup_{n\ge 0}f^{-n}(\Omega)$ it is easy to find its open subset $U$,
such that $f^{-1}(U)\subset U$
and $U\setminus f^{-1}(U)$
is a non-empty open subset of $\Omega$. 
Since $H=T_f H$, we then have
(the integration is against the Lebesgue measure on the plane):
$$
\int_U |H(x)|d\sigma_x=\int_U \left|\sum_{f(w)=x}
\frac{H(w)}{f'(w)^2}\right|d\sigma_x\le $$
$$\int_U \sum_{f(w)=x}
\frac{|H(w)|}{|f'(w)|^2}d\sigma_x=
\int_{f^{-1}(U)} |H(x)|d\sigma_x,
$$
which is possible only if $H=0$ in $U\setminus f^{-1}(U)$,
hence, in $\Omega$, because $U\subset \Omega$. 
And if $\Omega$ is a Siegel disk, 
we proceed as
in the proof of Corollary 1(b) p. 190 of~\cite{Le} to show that
$H=0$ in $\Omega$ as well (see also Lemma~\ref{contr}). 
Thus, $H=0$ off $J$.
On the other hand, $H$ can be represented as 
\begin{equation}\label{hform}
H(x)=\sum_{k=0}^\infty \frac{\alpha_k}{x-b_k}, 
\end{equation}
where the points $b_k\in J$ are pairwise different,
and $|\alpha_k|<\infty$. 
Consider a measure with compact support
$\mu=\sum_{k=0}^\infty \alpha_k \delta(b_k)$, where
$\delta(z)$ is the Dirac measure at the point $z$. 
Then
$H=0$ off $J$ implies that
the measure $\mu$ annihilates
any function which is holomorphic in a neighborhood of $J$.
Indeed, for any such function $r(z)$ and an appropriate 
contour $\gamma$ enclosing $J$,
$$
\int r(z)d\mu(z)=\int \left(\frac{1}{2\pi i}\int_\gamma
\frac{r(w)}{w-z}dw\right)d\mu(z)=
\frac{1}{2\pi i}\int_\gamma r(w)\left(\int\frac{d\mu(z)}{w-z}\right)dw=
$$
$$
\frac{1}{2\pi i}\int_\gamma H(w)r(w)dw=0.
$$
As every point of $J$ belongs also to the boundary of the basin of infinity, 
by a corollary from Vitushkin's theorem
(see e.g.~\cite{Ga}), every continuous function on $J$
is uniformly approximated by rational functions.
It follows, $\mu=0$, in other words, the representation~(\ref{hform})
is trivial:
\begin{equation}\label{mu0}
\alpha_k=0, \ \ k=0,1,2,... .
\end{equation}
That is to say, the left hand side of~(\ref{hhj}) is zero:
\begin{equation}\label{h=0}
\sum_{j\in S} a_j H_j=0.
\end{equation}
But, if the number of indexes in $S$ is bigger than one, 
it does not imply immediately that all the numbers
$a_j$ in~(\ref{h=0}) must vanish. Indeed, some $H_j$ can even coincide:
by the definition of the function $H_j$, see Proposition~\ref{vaznoe},
$H_{j_1}=H_{j_2}$, if $v_{j_1}=v_{j_2}$.
So, we will use~(\ref{rankl}) along with~(\ref{h=0}) to prove  
\begin{lem}\label{stranno}
$a_k=0$, for $k\in S$.
\end{lem}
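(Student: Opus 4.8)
The plan is to feed both consequences of the assumed rank deficiency — the linear relation~(\ref{rankl}) and the identity~(\ref{h=0}) — back into the analytic structure, after first organising the $r$ summable points according to their critical values. Partition $S$ into classes $S_1,\dots,S_s$, where $j$ and $j'$ lie in the same class exactly when $v_j=v_{j'}$; write $w_i$ for the common critical value of the points indexed by $S_i$ (so $w_1,\dots,w_s$ are pairwise distinct) and set $b_i=\sum_{j\in S_i}a_j$. Since $H_j$ depends on $j$ only through $v_j$ by~(\ref{hj}), we have $H_j=\tilde H_i$ for every $j\in S_i$, where $\tilde H_i(x)=\sum_{n\ge0}\frac{1}{(f^n)'(w_i)(f^n(w_i)-x)}$, so~(\ref{h=0}) becomes $\sum_{i=1}^s b_i\,\tilde H_i\equiv 0$. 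The lemma will therefore follow from two claims: (A) the functions $\tilde H_1,\dots,\tilde H_s$ are linearly independent, whence $b_i=0$ for all $i$; and (B) once every $b_i=0$, each $a_k$ with $k\in S$ vanishes.

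Claim (B) is the short one and uses~(\ref{rankl}). By Corollary~\ref{lemravnie}, $L(c_j,v_k)=\delta_{j,k}+A(v_j,v_k)$, and $A(v_j,v_k)$ depends on $j$ only through $v_j$, hence is constant as $j$ ranges over a class $S_i$. Fixing $k\in S$ and noting $\sum_{j\in S}a_j\delta_{j,k}=a_k$, equation~(\ref{rankl}) gives
\[
0=\sum_{j\in S}a_j L(c_j,v_k)=a_k+\sum_{j\in S}a_j A(v_j,v_k)=a_k+\sum_{i=1}^{s}b_i\,A(w_i,v_k),
\]
so if all $b_i=0$ then $a_k=0$.

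Claim (A) is the heart of the matter, and I expect it to be the main obstacle. I would prove it by induction on $s$, inspecting the simple poles of $\sum_i b_i\tilde H_i$ — legitimate because the Vitushkin-type argument preceding the lemma shows this function, being $H$, vanishes \emph{identically}, so all its residues are zero. The residue of $\tilde H_i$ at a point $p$ is $\rho_i(p)=-\sum_{n:\,f^n(w_i)=p}(f^n)'(w_i)^{-1}$, a convergent (possibly empty) sum, nonzero exactly at the points of the forward orbit $O_i=\{f^n(w_i):n\ge0\}$. If some $w_i$ is not a periodic point, then one of them, $w_{i_0}$, is non-periodic and lies on no $O_{i'}$ with $i'\ne i_0$ (take a non-periodic $w_{i_0}$ minimal under the relation ``$w_i=f^n(w_{i'})$ for some $n\ge1$'', which is acyclic on non-periodic points since a cycle would create a periodic point; minimality then rules out any $w_{i'}$, periodic or not, having $w_{i_0}$ in its orbit). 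Hence the only term of $\sum_i b_i\tilde H_i$ with a pole at $w_{i_0}$ is $b_{i_0}\tilde H_{i_0}$, of residue $-b_{i_0}$ there, forcing $b_{i_0}=0$; deleting $i_0$ we repeat. If instead every $w_i$ is periodic, each sits on a repelling cycle — repelling because along a periodic critical value the summability condition~(\ref{summ}) forces the multiplier $\mu$ to satisfy $|\mu|>1$ — and orbits of values on distinct cycles are disjoint; for a fixed cycle $\Gamma$ of period $q$ carrying the values $w_{i_1},\dots,w_{i_m}$, vanishing of the residues at $w_{i_1},\dots,w_{i_m}$ is the linear system $\sum_{a=1}^{m} b_{i_a}\rho_{i_a}(w_{i_b})=0$, $b=1,\dots,m$. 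A direct computation gives $\rho_{i_a}(w_{i_b})=-\frac{\mu}{\mu-1}\bigl((f^{n_{ab}})'(w_{i_a})\bigr)^{-1}$ with $n_{ab}$ the first transit time from $w_{i_a}$ to $w_{i_b}$ around $\Gamma$; rescaling the rows and columns reduces this matrix to the ``staircase'' matrix whose entries are $1$ on and above the diagonal and $\mu^{-1}$ below, of determinant $(1-\mu^{-1})^{m-1}\ne0$, so $b_{i_1}=\dots=b_{i_m}=0$, and treating the cycles one at a time finishes the induction.

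The one genuinely delicate ingredient is exactly this linear independence in Step (A): one must control how the forward orbits of the distinct critical values $w_i$ overlap, the awkward subcase being several $w_i$ that are repelling periodic points sharing a common cycle. Everything else — the regrouping by critical value in Step (B) and the reduction above — is formal manipulation of identities already established.
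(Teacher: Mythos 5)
Your proof is correct and follows essentially the same route as the paper: regroup the summable points by their distinct critical values, use vanishing of all residues of $H$ to reduce to critical values lying on repelling cycles (repelling because of summability), and then recover $a_k=0$ from~(\ref{rankl}) via Corollary~\ref{lemravnie} exactly as in the text. The only real difference is the cycle step, where you invert an explicit $m\times m$ transit-time residue matrix, whereas the paper eliminates the coefficients through the successive pole-cancellation argument of~(\ref{dir1})--(\ref{dir3}); both computations are valid.
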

\begin{proof}
As the first step, we show that {\it different} functions $H_j$
are linearly independent.
Let us denote
by $V_i$, $1\le i\le q$, all the {\it different} critical
values of $f$. For $1\le i\le q$, introduce $\tilde H_i$
to be $H_j$, for every $j$, such that $v_j=V_i$. 
The set of indexes $S$
is a disjoint union of subsets $S_i$, $1\le i\le e$,
so that
$j\in S_i$ if and only if $v_j=V_i$. 
Then
\begin{equation}\label{tildeh}
0=\sum_{i=1}^e \tilde a_i \tilde H_i, \ \ \ \
\tilde a_i=\sum_{j\in S_i} a_j.
\end{equation}
We show that this representation is trivial:
$\tilde a_i=0, \ \ \ 1\le i\le e$.
By Proposition~\ref{vaznoe},
\begin{equation}\label{tildeh1}
\tilde H_i(x)=\sum_{n=0}^\infty\frac{1}{(f^n)'(V_i)(f^n(V_i)-x)},
\end{equation}
where $\{V_i\}$ are pairwise different critical values of $f$.
By contradiction, assume that some $\tilde a_i\not=0$.
Without loss of generality, one can assume further that 
$i=1$, i.e., $\tilde a_1\not=0$.
{\it Claim:} $V_1$ is a point of a periodic orbit $P$ of $f$,
and, moreover, if $P$ contains some $f^k(V_j)$, $k\ge 0$, 
with $\tilde a_j\not=0$, then $V_j\in P$. 
By~(\ref{tildeh}), the residue $1$ of the first term
in the sum~(\ref{tildeh1}) (with $i=1$)
must be cancelled. Hence, there is a critical value $V_{i_1}$, so that
$V_1=f^{k_1}(V_{i_1})$ with $\tilde a_{i_1}\not=0$, 
for some minimal $k_1>0$. If here $i_1\not=1$,
then, by the same reason, there is $V_{i_2}$, so that
$V_{i_1}=f^{k_2}(V_{i_2})$ with $\tilde a_{i_2}\not=0$, 
for some minimal $k_2>0$. We continue until we arrive at
a critical value that has been met before. This proves the {\it Claim}.

Denote by $V_{i_0}=V_1,V_{i_1},...,V_{i_s}$ all the different
critical values which are points of the periodic orbit $P$
and such that $\tilde a_{i_l}\not=0$, $0\le l\le s$.
Since $V_{i_l}$ are periodic points, each
$\tilde H_{i_l}$, $0\le l\le s$, is a rational function
(see also~(\ref{dir2}) below). We have:
\begin{equation}\label{dir0}
\sum_{l=0}^s \tilde a_{i_l} \tilde H_{i_l}=0,
\end{equation}
where every $\tilde a_{i_l}$ is non zero.
Denote by $T$ the (minimal) period of $P$.
Changing indexes, if necessary, there are 
some $0<n_1<n_{2}<...<n_s<T$, such that 
\begin{equation}\label{ind}
f^{n_l}(V_1)=V_{i_l},\ \ \ \ 1\le l\le s. 
\end{equation}
The rest is a direct not difficult
calculation. From~(\ref{tildeh1}) and~(\ref{ind}), for $1\le l\le s$,
\begin{equation}\label{dir1}
\tilde H_{i_l}(x)=(f^{n_l})'(V_1)\left(\tilde H_{1}(x)-
\sum_{n=0}^{n_l-1}\frac{1}{(f^n)'(V_1)(f^n(V_1)-x)}\right),
\end{equation}
while, if $\rho=(f^T)'(V_1)$,
\begin{equation}\label{dir2}
\tilde H_{1}(x)=\frac{1}{1-\rho^{-1}}\sum_{n=0}^{T-1}
\frac{1}{(f^n)'(V_1)(f^n(V_1)-x)}.
\end{equation}
(Note that $|\rho|>1$, because $V_1$ is summable.)
If $s=0$ (i.e., $V_1$ is the only critical value
$V_i\in P$ such that $\tilde a_i\not=0$), 
then $\tilde H_{1}=0$, which is a contradiction with~(\ref{dir2}).
Assume $s>0$.
By~(\ref{dir1}), the relation~(\ref{dir0}) turns into:
\begin{equation}\label{dir3}
\left(\tilde a_1+\sum_{l=1}^s \tilde a_{i_l}(f^{n_l})'(V_1)\right)
\tilde H_{1}(x)=\sum_{l=1}^s
\tilde a_{i_l}(f^{n_l})'(V_1)\sum_{n=0}^{n_l-1}
\frac{1}{(f^n)'(V_1)(f^n(V_1)-x)},
\end{equation}
where $\tilde H_1$ is given by~(\ref{dir2}).
Since $0\le n_1-1<...<n_s-1\le T-2$ and the points $\{f^n(V_1)\}_{n=0}^{T-1}$
are pairwise different, 
the function on the right hand side of~(\ref{dir3})
has no pole at $f^{T-1}(V_1)$ while $\tilde H_1$ has.
Hence, $\tilde a_1+\sum_{l=1}^s \tilde a_{i_l}(f^{n_l})'(V_1)=0$.
It means that the left hand side in~(\ref{dir3}) is zero
identically. Hence, the function in the right hand side 
of~(\ref{dir3}) has no pole
at $f^{n_s-1}(V_1)$, which is possible only if $\tilde a_{i_s}=0$.
This is already a contradiction, because, by our assumption,
all $\tilde a_{i_l}$ are non-zero numbers.

The contradiction shows that
$\tilde a_i=0$, for $1\le i\le e$. 
To get from this that all $a_k=0$, we use Corollary~\ref{lemravnie},
and rewrite~(\ref{rankl}), 
for every $k\in S$:
\begin{eqnarray*}\label{fin}
0=\sum_{j\in S} a_j L(c_j, v_k)=
\sum_{i=1}^e \sum_{j\in S_i} a_j \left(\delta_{j,k}+A(V_i, v_k)\right) &=& \\
\sum_{i=1}^e 
\left(\sum_{j\in S_i} a_j\delta_{j,k} + A(V_i, v_k)\sum_{j\in S_i} a_j\right)
=\sum_{i=1}^e 
\left(\sum_{j\in S_i} a_j\delta_{j,k} + A(V_i, v_k)\tilde a_i\right)
&=& \\
\sum_{i=1}^e\sum_{j\in S_i} a_j\delta_{j,k}=\sum_{j\in S} a_j\delta_{j,k}=a_k.
\end{eqnarray*}
\end{proof}
This ends the proof of Theorem~\ref{one} (b), because this 
is a contradiction with the fact, that at least one $a_j$, $j\in S$,
is a non zero number.
\section{Rational functions}\label{rat}
\subsection{Local spaces of rational maps}
Let $f$ be a rational function of degree $d\ge 2$, such that 
\begin{equation}\label{rrr}
f(z)=\sigma z+b+\frac{P(z)}{Q(z)},
\end{equation}
where $\sigma\not=0,\infty$, and $Q$, $P$ are polynomials of
degrees $d-1$ and at most $d-2$ respectively, which have no common roots.
Without loss of generality, one can assume that $Q(z)=z^{d-1}+A_1
z^{d-2}+...+A_{d-1}$ and $P(z)=B_0 z^{d-2}+...+B_{d-2}$.

Let $p'$ stand for the number of distinct critical points
$c_1,...,c_{p'}$ of $f$. Note that all of them are different from $\infty$.
Denote by $m_j$ the multiplicity of $c_j$,
that is, the equation $f(w)=z$ has precisely $m_j+1$ different
solutions near $c_j$ for $z$ near $f(c_j)$ and $z\not=f(c_j)$, $j=1,...,p'$.
Observe that it is equivalent to say that, 
if $f=\frac{\hat P}{Q}$
where $\hat P(z)=Q(z)(\sigma z+b)+P(z)$, then 
$c_j$, for $1\le j\le p'$,
is a root of the polynomial $\hat P' Q - \hat P Q'$ of multiplicity
$m_j$.
Note that $\sum_{j=1}^{p'} m_j= 2d-2$. 

Let $\bar p'=\{m_j\}_{j=1}^{p'}$ denote
the vector of multiplicities.
Let $v_j=f(c_j)$, $1\le j\le p'$, be the corresponding critical values.
We assume that some of them can coincide as well as 
some can be $\infty$.
By $p$ we denote the number of critical points of $f$,
so that their images (i.e., the corresponding critical values)
lie in the complex plane.

As in \cite{mult}, we define a local (near $f$) space 
$\Lambda_{d, \bar p'}$ of rational functions.
\begin{defi}\label{ratpoldef}
A rational
function $g$ of degree $d$ belongs to $\Lambda_{d, \bar p'}$ if
and only if it has the form
\begin{equation}\label{rrg}
g(z)=\sigma(g) z+b(g)+\frac{P_g(z)}{Q_g(z)},
\end{equation}
where the numbers $\sigma(g), b(g)$, and 
the polynomials $Q_g=z^{d-1}+A_1(g)z^{d-2}+...+A_{d-1}(g)$ 
and $P_g(z)=B_0(g) z^{d-2}+...+B_{d-2}(g)$
are close to $\sigma, b$, $P, Q$ respectively.
Moreover, $g$ has $p'$ different critical points $c_1(g),...,c_{p'}(g)$,
so that $c_j(g)$ is close to $c_j$ and has the same multiplicity $m_j$,
$1\le j\le p'$.
\end{defi}

For $g$ in a sufficiently small
neighborhood of $f$ in $\Lambda_{d, \bar p'}$, 
introduce a vector $\bar v(g)\in {\bf C}^{p'+2}$ as follows.
Let us fix an order $c_1,...,c_{p'}$ in the collection 
of all critical points of $f$. We will do it in such a way,
that the first $p$ indexes correspond to finite critical values,
i. e. $v_j\not=\infty$ for $1\le j\le p$ and $v_j=\infty$
for $p<j\le p'$ (if $p<p'$).
Remark that $p\ge 1$: there always at least one
critical value in the plane.
There exist $p'$ 
functions $c_1(g),...,c_{p'}(g)$ which are defined and continuous
in a small neighborhood of $f$ in $\Lambda_{d, \bar p'}$,
such that they constitute all different critical points of $g$.
Moreover, $c_j(g)$ has the multiplicity $m_j$, $1\le j\le p'$. 
Define now the vector $\bar v(g)$.
If all critical values of $f$ lie in the plane, then 
we set
$\bar v(g)=\{\sigma(g), b(g), v_1(g),...,v_{p'}(g)\}$.
If some of the critical values $v_j$ of $f$ are infinity,
that is, $v_j=\infty$ for $p<j\le p'$, then we replace
in the definition of $\bar v(g)$ corresponding $v_j(g)$
by $1/v_j(g)$:
$$\bar v(g)=\left\{\sigma(g), b(g), v_1(g),...,v_{p}(g),
\frac{1}{v_{p+1}(g)},...,\frac{1}{v_{p'}(g)}\right\}.$$
In particular,
$\bar v=\bar v(f)=\{\sigma, b, v_1,...,v_p,0,...,0\}$.

We can identify $g\in \Lambda_{d, \bar p'}$ as above with the point
$$\bar g=\left\{\sigma(g), b(g), A_1(g),...,A_{d-1}(g), 
B_0(g),...,B_{d-2}(g)\right\}$$
of ${\bf C}^{2d}$. Then $\Lambda_{d, \bar p'}$ 
is an analytic
variaty in ${\bf C}^{2d}$. We denote it again by $\Lambda_{d, \bar p'}$.
Every critical point $c_j$ of $f$ of multiplicity $m_j$ is determined
by $m_j-1$ algebraic equations. Therefore, $\Lambda_{d, \bar p'}$
is defined by $\sum_{j=1}^{p'} (m_j-1)=2d-2-p'$ equations corresponding
to the critical points.
Thus the (complex) dimension of $\Lambda_{d, \bar p'}$ is at least 
$2d-\sum_{j=1}^{p'} (m_j-1)=p'+2$.
In fact,
as it is proved in \cite{mult}, Sect. 7:
\begin{prop}\label{rlocal}
$\Lambda_{d, \bar p'}$
is a complex-analytic manifold of dimension $p'+2$,
and $\bar v(g)$ defines a local holomorphic
coordinate of $g\in \Lambda_{d, \bar p'}$.
\end{prop}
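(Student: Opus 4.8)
The plan is to establish this exactly as in the polynomial case (Proposition~\ref{local}, which is the corresponding statement from~\cite{mult}). From the discussion preceding the proposition we already know that $\Lambda_{d,\bar p'}$ is carved out, inside the $2d$-dimensional chart $g=\sigma(g)z+b(g)+P_g(z)/Q_g(z)$, by $2d-2-p'$ analytic equations, so its local dimension at $f$ is at least $p'+2$. It therefore suffices to prove that the holomorphic map $g\mapsto\bar v(g)$, defined near $f$ with values in ${\bf C}^{p'+2}$, has injective differential at $f$ on the tangent space of $\Lambda_{d,\bar p'}$ (equivalently, on the kernel of the differential of the defining equations): together with the dimension bound this forces $\Lambda_{d,\bar p'}$ to be smooth at $f$ of dimension exactly $p'+2$, with $\bar v$ a local chart, which is the assertion.

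Thus let $u$ be a tangent vector to $\Lambda_{d,\bar p'}$ at $f$ with $d\bar v_f(u)=0$; I must show $u=0$. A tangent vector is a rational function of the form $u(z)=\dot\sigma\,z+\dot b+(\dot P(z)Q(z)-P(z)\dot Q(z))/Q(z)^2$ with $\deg\dot P\le d-2$ and $\deg\dot Q\le d-2$; in particular the poles of $u$ lie among the poles of $f$, and $u(z)=\dot\sigma\,z+\dot b+O(1/z)$ at $\infty$. The condition $d\bar v_f(u)=0$ means $\dot\sigma=\dot b=0$, $\frac{d}{dt}v_j=0$ at $t=0$ for $1\le j\le p$, and $\frac{d}{dt}(1/v_j)=0$ at $t=0$ for $p<j\le p'$. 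For $1\le j\le p$, Proposition~\ref{dergenlem}, applied with the coordinate $\bar v$ and the critical point $c_j(g)$, shows that $u$ has a zero of order at least $m_j$ at $c_j$. For $p<j\le p'$, the point $c_j$ is a pole of $f$ of order $m_j+1$, and I would run the integration argument of Proposition~\ref{dergenlem} verbatim with $g$ replaced by $1/g$ in a neighbourhood of $c_j$ (there $c_j(g)$ is a critical point of $1/g$ of the same multiplicity $m_j$, with associated critical value $1/v_j(g)$); this gives that $-u/f^2-\frac{d}{dt}(1/v_j)$ vanishes to order at least $m_j$ at $c_j$, and since $\frac{d}{dt}(1/v_j)=0$ while $1/f^2$ has a zero of order $2m_j+2$ at $c_j$, it follows that $u$ has a pole of order at most $m_j+2$ at $c_j$. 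At every other pole of $f$ — all of which are simple (a pole of order $\ge 2$ would be a critical point with critical value $\infty$, hence one of $c_{p+1},\dots,c_{p'}$) — the shape of $u$ shows it has a pole of order at most $2$, and $u=O(1/z)$ at $\infty$.

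Now I count zeros against poles of $u$ on the sphere ${\bf \bar C}$. Writing $M=\sum_{p<j\le p'}m_j$ and using $\sum_{j=1}^{p'}m_j=2d-2$, the number of poles is at most
\[
\sum_{p<j\le p'}(m_j+2)+2\left(d-1-\sum_{p<j\le p'}(m_j+1)\right)=2d-2-M,
\]
whereas the number of zeros is at least $1+\sum_{1\le j\le p}m_j=2d-1-M$: a zero of order at least $1$ at $\infty$, together with zeros of order at least $m_j$ at the pairwise distinct points $c_1,\dots,c_p$, which are not poles of $f$ (their critical values being finite) and not equal to the poles counted above. Since a non-constant rational function has equally many zeros and poles on ${\bf \bar C}$, this is impossible; hence $u$ is constant, and being $O(1/z)$ at $\infty$ it vanishes. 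So $d\bar v_f$ is injective and the proposition follows.

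The part I expect to demand the most care is the local analysis at the critical points $c_j$ whose critical value is $\infty$: one must choose the correct local chart ($1/g$ near such a $c_j$), verify that the argument of Proposition~\ref{dergenlem} carries over to yield the sharp pole bound $m_j+2$ for $u$ (the cruder bound $2m_j+2$ that follows merely from the shape of $u$ would not close the count), and keep track of how those poles of $f$ that are simultaneously critical points deform inside $\Lambda_{d,\bar p'}$. This is precisely where the normal form $\sigma z+b+P(z)/Q(z)$, which pins $\infty$ as a fixed point, is used; the whole argument is the rational-map counterpart of the polynomial computation of~\cite{mult}, Section~7.
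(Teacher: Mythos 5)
The paper itself does not prove Proposition~\ref{rlocal}: it is quoted from Section~7 of~\cite{mult}, so there is no in-paper argument to compare yours against. Taken on its own terms, your proof is essentially sound, and it is in the spirit of the paper's toolkit: the lower bound $p'+2$ on the dimension is exactly the paper's preceding remark, the key local input is (the content of) Proposition~\ref{dergenlem}, and your device of passing to $1/g$ at the critical points with infinite critical value is the same trick the paper uses in Proposition~\ref{derlemrat}(iii). The decisive count is correct: with $M=\sum_{p<j\le p'}m_j$, a nonzero variation $u$ in the kernel of $d\bar v$ would have at most $2d-2-M$ poles (order $\le m_j+2$ at the multiple poles, thanks to the refined bound coming from $d(1/v_j)=0$; order $\le 2$ at the remaining, necessarily simple, poles of $f$) against at least $2d-1-M$ zeros (order $m_j$ at $c_1,\dots,c_p$ plus one at infinity), which is impossible.

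Two points need repair, both minor. First, invoking Proposition~\ref{dergenlem} ``with the coordinate $\bar v$'' is circular: that proposition presupposes a manifold of maps on which $\bar v$ is a coordinate, which is what you are proving, and since smoothness of $\Lambda_{d,\bar p'}$ at $f$ is not yet known, an element of the kernel of the linearized defining equations need not be tangent to any curve in the variety (and injectivity on curve-tangent vectors alone would not give smoothness). The fix is to rerun the integration argument of Proposition~\ref{dergenlem} at the linearized level: the defining equations say that the Weierstrass factor at $c_j$ of the numerator $N_g$ of $g'$ is an $m_j$-th power, and their linearization at $f$ says precisely that the first-order variation of $N_g$ vanishes to order at least $m_j-1$ at $c_j$; hence $u'$ (respectively, the variation $-u/f^2$ of $1/g$, differentiated) vanishes to order $m_j-1$ there, and integrating from $c_j$ and using $\dot v_j=0$ (respectively, $\frac{d}{dt}(1/v_j)=0$ and $(1/f)'(c_j)=0$) gives the zero of order $m_j$, respectively the pole bound $m_j+2$, now valid for every Zariski tangent vector. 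Second, at the end you must pass from $u\equiv 0$ to the vanishing of the tangent vector in coefficient space: since $\dot\sigma=\dot b=0$, the identity $\dot P\,Q=P\,\dot Q$ with $P,Q$ coprime and $\deg\dot Q\le d-2<\deg Q$ forces $\dot P=\dot Q=0$. With these two additions the argument is complete and gives a self-contained proof of the proposition.
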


\

Two rational functions are called close if they are uniformly
close in the Riemann metric on the sphere.
We call two rational functions ($M$-){\it equivalent} if there is
a Mobius transformation $M$ which conjugates them.
Every rational function $f$ is equivalent to some $\tilde f$ 
of degree $d\ge 2$ be of the form~(\ref{rrr}). Indeed, 
$f$ has either a repelling fixed point,
or a fixed point with the multiplier $1$ (see
e.g.~\cite{mil}). Hence, there exists a Mobius
transformation $P$, such that $\infty$ is a fixed 
non-attracting point of $\tilde f=P\circ f\circ P^{-1}$. See also 
Section~\ref{disc}.

If a critical value of $f$ or its iterate is infinity, 
we consider also another space of rational maps which is
biholomorphic to $\Lambda_{d, \bar p'}$.
Let us fix a Mobius transformation $M$, such that $\alpha=M(\infty)$ 
lies outside of the critical orbits $\{f^n(c_k): n\ge 0,
1\le k\le p'\}$ of $f$. Then we make the same change of variable $M$ for all
maps from $\Lambda_{d, \bar p'}$, and obtain a new space with
a natural coordinate: 
\begin{defi}\label{Mspace}
Let $\Lambda^M_{d, \bar p'}$ be the set of maps
$$\{M^{-1}\circ g\circ M: g\in \Lambda_{d, \bar p'}\}.$$
If $\tilde g\in \Lambda^M_{d, \bar p'}$, the
points $c_k(\tilde g)=M^{-1}(c_k(g))$ and $v_k(\tilde g)=M^{-1}(v_k(g))$
are the critical point 
(of multiplicity $m_k$) and
the critical value of $\tilde g$, $1\le k\le p'$, respectively. 
Then the vector 
\begin{equation}\label{vm}
\bar v^M(\tilde g)=\{\sigma(g), b(g), v_1(\tilde g), v_{2}(\tilde g),...,
v_{p'}(\tilde g)\}
\end{equation}
defines a holomorphic coordinate system
in $\Lambda^M_{d, \bar p'}$.
\end{defi}
The advantage of the new space is that the critical orbits
of $\tilde f=M^{-1}\circ f\circ M$ lie in the plane
(although can be unbounded). 
\subsection{Subspaces}\label{disc}
Suppose $f$ is an arbitrary rational function of degree $d\ge 2$.
Denote by $p'$ 
the number of different critical points of $f$
in the Riemann sphere, and by $\bar p'$ the vector of multiplicities
at the critical points.
As it was mentioned, there is an alternative:
either 

{\bf (H)}: {\it $f$ has a fixed point $a$, such that $f'(a)\not=0,1$,}

or 

{\bf (N)}: {\it the multiplier of
every fixed point of $f$ is either $0$ or $1$,
and there is a fixed point with the multiplier $1$.} 

\

The case {\bf (N)}
is degenerate. We consider each case separately.

{\bf (H)}. Let $P$ be a Mobius transformation, such that $P(a)=\infty$.
Then $\tilde f=P\circ f\circ P^{-1}$
belongs to $\Lambda_{d, \bar p'}$. Moreover,
$P$ can be chosen uniquely in such a way, that $b(\tilde f)=0$,
and the critical value $v_p$ of
$\tilde f$ is equal to $1$. 
Let us define a submanifold $\Lambda_{\tilde f}$ 
of $\Lambda_{d, \bar p'}$
consisting of $g\in \Lambda_{d, \bar p'}$
in a neighborhood of $\tilde f$, such that $b(g)=0$, 
and $v_p(g)=1$.
The coordinate $\bar v(g)$ in $\Lambda_{d, \bar p'}$ restricted
to $\Lambda_{\tilde f}$ is obviously a coordinate in that subspace,
which turns it into a $p'$-dimensional complex manifold.

\

{\bf (N)}. There are two sub-cases to distinguish.

{\bf (NN)}: $f$ has a fixed point $a$, such that 
$f'(a)=1$ and $f''(a)\not=0$. 
Let $P$ be a Mobius transformation, such that $P(a)=\infty$.
Then $\tilde f=P\circ f\circ P^{-1}$
belongs to $\Lambda_{d, \bar p'}$. Moreover,
$P$ can be chosen uniquely in such a way, that 
$v_p(\tilde f)=1$ and $b(\tilde f)=1$. 
Then we define $\Lambda_{\tilde f}$ to be the set of all
$g\in \Lambda_{d, \bar p'}$
in a neighborhood of $\tilde f$, such that $b(g)=1$, 
and $v_{p}(g)=1$.
Coordinates in $\Lambda_{\tilde f}$ is defined as in the previous case,
and it turns $\Lambda_{\tilde f}$ into a $p'$-dimensional complex manifold.

\

{\bf (ND)}: every fixed point with multiplier $1$
is degenerate. Let $a$ be one of them: $f'(a)=1$ and
$f''(a)=0$. Then the Mobius map $P$ can be chosen
uniquely in such a way, that  
$\tilde f(z)=P\circ f\circ P^{-1}(z)=
z+O(1/z)$, and $\tilde f$ has a critical value equal to $1$
in one attracting petal of $\infty$, and equal to $0$
in another attracting petal of $\infty$. 
Then $\Lambda_{\tilde f}$ consists of $g\in \Lambda_{d, \bar p'}$
in a neighborhood of $\tilde f$, such that the 
critical value of $g$ which is close to $v_{p-1}(\tilde f)=1$ 
is identically equal to
$1$, and the critical value of $g$, which is close to $v_{p}(\tilde f)=0$,
is identically equal to $0$. 
Then $\Lambda_{\tilde f}$ is a $p'$-dimensional complex manifold.

\

It is easy to check that in any of these cases,
every $\tilde g\in \Lambda_{d, \bar p'}$
is equivalent (by a linear conjugacy) to some 
$\tilde g_1\in \Lambda_{\tilde f}$. Let us drop the condition
that maps fix infinity and consider the set $X_{\tilde f}$ of {\it all}
rational functions $\hat g$ of degree $d$ which are close to $\tilde f$
and such that $\hat g$ has $p'$ different critical points
with the same corresponding multiplicities. Then, since any such $\hat g$
has a fixed point close to infinity, it is equivalent
to some $\tilde g\in \Lambda_{d, \bar p'}$ and, hence, to some
$\tilde g_1\in \Lambda_{\tilde f}$.
In any of the cases {\bf (H)}, {\bf (NN)}, {\bf (ND)}, 
we denote 
\begin{equation}\label{XL}
X_f=\{g=P^{-1}\circ \hat g\circ P: 
\hat g\in X_{\tilde f}\}, \ \ \
\Lambda_f=\{g=P^{-1}\circ \tilde g\circ P: 
\tilde g\in \Lambda_{\tilde f}\},
\end{equation} 
where the Mobius map $P$ is taken as before.
This shows the first part of the following statement. 
\begin{prop}\label{gen}
(a) Every rational function $f$ of degree $d\ge 2$ is equivalent
to some $\tilde f\in \Lambda_{d, \bar p'}$, where
$\tilde f$ is of one and only one type:
either ${\bf (H)}$ or ${\bf (NN)}$ or ${\bf (ND)}$.
If $X_f$ denotes the set of all
rational functions $g$ of degree $d$ which are close to $f$
and such that $g$ have $p'$ distinct critical points
with the same corresponding multiplicities, then
any $g\in X_f$ 
is equivalent to some $g_1\in \Lambda_{f}$. 

(b) In the case {\bf (H)}, the set $X_f$ is a complex-analytic 
manifold of dimension $p'+3$, and the correspondence
$\pi: g\in X_f\mapsto g_1\in \Lambda_f$ is a well-defined holomorphic map.
\end{prop}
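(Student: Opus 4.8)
The plan is as follows. Assertion (a) is, in substance, already contained in the discussion preceding the statement. The trichotomy is forced because a rational map of degree $d\ge 2$ cannot have all of its fixed points of multiplier $0$ (by the holomorphic fixed point formula, see~\cite{mil}); hence either there is a fixed point of multiplier $\ne 0,1$ (case {\bf (H)}), or every fixed point has multiplier $0$ or $1$ and at least one has multiplier $1$ (case {\bf (N)}), and {\bf (N)} splits into {\bf (NN)} and {\bf (ND)} according to whether some multiplier-one fixed point is non-degenerate, these two being mutually exclusive and exhaustive. Since the type is a conjugacy invariant, it is intrinsic to $f$. Conjugating the relevant fixed point to $\infty$ by a Mobius map $P$ and performing the normalization described in Subsection~\ref{disc} — which is possible because the translation part of the normalization uses $\sigma\ne 1$ and the scaling part uses that there is a critical value which can be made non-zero — produces $\tilde f\in\Lambda_{\tilde f}$ equivalent to $f$. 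The second assertion of (a) follows the same recipe applied to $g\in X_f$ close to $f$: the relevant fixed point of $PgP^{-1}$ near $\infty$ persists (it is simple, its multiplier being near $1/\sigma\ne 1$), and carrying it to $\infty$ and normalizing gives $g_1\in\Lambda_f$ equivalent to $g$.

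For part (b), conjugating by $P$ we may assume $f=\tilde f\in\Lambda_{\tilde f}$, with $\tilde f(\infty)=\infty$ of multiplier $1/\sigma\notin\{0,1,\infty\}$. By Proposition~\ref{rlocal}, $\Lambda_{\tilde f}$ — cut out of $\Lambda_{d,\bar p'}$ by fixing the two coordinate functions $b$ and $v_p$ — is a complex manifold of dimension $p'$. Let $G$ be a neighbourhood of the identity in the group of Mobius transformations (a $3$-dimensional complex Lie group), and consider the holomorphic map
\begin{equation}
\Phi\colon G\times\Lambda_{\tilde f}\longrightarrow{\bf Rat}_d,\qquad\Phi(M,g)=M^{-1}\circ g\circ M .
\end{equation}
I would first show $\Phi$ is injective on a small enough neighbourhood of $(\mathrm{id},\tilde f)$: if $M_1^{-1}g_1M_1=M_2^{-1}g_2M_2$, then $N=M_1M_2^{-1}$, which is close to the identity, conjugates $g_2$ to $g_1$; since every map near $\tilde f$ has a unique fixed point near $\infty$, $N$ fixes $\infty$ and is therefore affine, the condition $b(g_1)=b(g_2)=0$ then forces the translation part of $N$ to vanish (here $\sigma\ne 1$), and $v_p(g_1)=v_p(g_2)=1$ forces the scaling part to be trivial, so $N=\mathrm{id}$ and $g_1=g_2$, $M_1=M_2$.

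Next I would show $\Phi$ is an immersion at $(\mathrm{id},\tilde f)$. Its differential there sends $(\xi,v)\in\mathrm{sl}_2\oplus T_{\tilde f}\Lambda_{\tilde f}$ to $v+\bigl(D\tilde f\cdot\xi-\xi\circ\tilde f\bigr)$, and the source has dimension $3+p'=p'+3$. If this vanishes, the coboundary $\xi\circ\tilde f-D\tilde f\cdot\xi$ lies in $T_{\tilde f}\Lambda_{\tilde f}$, i.e.\ it represents a deformation that keeps $\infty$ fixed and keeps $b=0$, $v_p=1$; evaluating it at $\infty$ gives $(1/\sigma-1)\,\xi(\infty)=0$, so $\xi(\infty)=0$ and $\xi$ is affine; the $b=0$ requirement then kills the translation part of $\xi$ (again $\sigma\ne 1$) and the $v_p=1$ requirement kills the scaling part, so $\xi=0$ and hence $v=0$. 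Thus $D\Phi$ is injective, and $\Phi$ restricts to a biholomorphism of a neighbourhood of $(\mathrm{id},\tilde f)$ onto a $(p'+3)$-dimensional complex submanifold $Y\subset{\bf Rat}_d$.

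Finally I would identify $Y$ with $X_{\tilde f}$ near $\tilde f$. One inclusion is immediate: conjugation preserves the degree and the number and multiplicities of critical points, so $Y\subset X_{\tilde f}$. Conversely, given $\hat g\in X_{\tilde f}$ close to $\tilde f$, it has a unique fixed point near $\infty$ of multiplier $\ne 0,1$, and by the normalization recipe of Subsection~\ref{disc} there is a Mobius map $M=M(\hat g)$, close to the identity, with $M^{-1}\hat gM=:g\in\Lambda_{\tilde f}$; moreover $M(\hat g)$ depends holomorphically on $\hat g$ by the implicit function theorem, the normalizing equations being non-degenerate because $\sigma\ne 1$ and $v_p\ne 0$. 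Then $\hat g=\Phi(M^{-1},g)\in Y$, so $X_{\tilde f}=Y$ is a $(p'+3)$-dimensional complex manifold and $\Phi^{-1}$ is holomorphic on it; the map $\pi$ of the statement is $\mathrm{pr}_{\Lambda_{\tilde f}}\circ\Phi^{-1}$ transported back by $P$, and it is well defined precisely because $\Phi$ is injective. I expect the main obstacle to be the immersion/transversality step — checking that the infinitesimal Mobius directions (coboundaries) meet $T_{\tilde f}\Lambda_{\tilde f}$ only in $0$ — since this is exactly where the hypothesis $\sigma\ne 1$, i.e.\ the restriction to case {\bf (H)}, is genuinely used: in cases {\bf (NN)} and {\bf (ND)} the fixed point at $\infty$ is parabolic and the argument breaks, which is why part (b) is stated only for {\bf (H)}.
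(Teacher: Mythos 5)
Your part (a) is in substance the paper's own argument: both rest on the trichotomy {\bf (H)}/{\bf (NN)}/{\bf (ND)} and on the normalizations of Subsection~\ref{disc}. For part (b), however, you take a genuinely different route. The paper parametrizes $X_{\tilde f}$ directly: it sends $\hat g$ to its unique fixed point $\beta$ near $\infty$ (here $\sigma\ne 1$ is used), conjugates by the explicit map $M_\beta(z)=\beta z/(\beta-z)$ to land in $\Lambda_{d,\bar p'}$, then applies an explicit affine normalization $A(z)=kz+e$ with $k=v_p(\tilde g)+\frac{b(\tilde g)}{\sigma(\tilde g)-1}$, $e=-\frac{b(\tilde g)}{\sigma(\tilde g)-1}$ to land in $\Lambda_{\tilde f}$; the chart $\{1/\beta,\bar v(\tilde g)\}\in{\bf C}^{p'+3}$ is simply declared to be the analytic structure on $X_{\tilde f}$, and $\pi={\bf A}\circ{\bf B}$ is visibly holomorphic. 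You instead realize $X_{\tilde f}$ locally as the image of the action map $\Phi(M,g)=M^{-1}\circ g\circ M$ on $G\times\Lambda_{\tilde f}$ and prove $\Phi$ is an injective immersion at $(\mathrm{id},\tilde f)$, the key point being that a coboundary $\xi\circ\tilde f-\tilde f'\xi$ tangent to $\Lambda_{\tilde f}$ forces $(1-1/\sigma)\,\xi(\infty)=0$ and then, via $b=0$ and $v_p=1$, $\xi=0$; your injectivity and transversality computations are correct. Your route costs this infinitesimal computation but buys a cleaner statement: $X_{\tilde f}$ is exhibited as a $(p'+3)$-dimensional submanifold of ${\bf Rat}_d$ with its ambient structure, and $\pi$ is the slice projection of a local biholomorphism; the paper's route is shorter and completely explicit. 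Both decompose $X_{\tilde f}$ locally as a product, yours as $3+p'$ (Mobius directions times the slice $\Lambda_{\tilde f}$), the paper's as $1+(p'+2)$ (position of the fixed point times $\Lambda_{d,\bar p'}$).

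One justification in your part (a) is wrong as stated. The second assertion of (a) covers all three types, but you argue that the fixed point of $PgP^{-1}$ near $\infty$ persists because ``it is simple, its multiplier being near $1/\sigma\ne1$''. This is true only in case {\bf (H)}; in cases {\bf (NN)} and {\bf (ND)} the fixed point at $\infty$ is parabolic and multiple, so that argument (and likewise your description of the normalization, which uses $\sigma\ne 1$) does not apply. What you need is weaker and still true, and is how the paper phrases it: by continuity of the fixed-point set, any $g$ close to $\tilde f$ has \emph{some} fixed point near $\infty$, automatically non-critical since the critical points of $g$ stay near the finite critical points of $\tilde f$; moving it to $\infty$ by a Mobius map close to the identity gives an element of $\Lambda_{d,\bar p'}$ near $\tilde f$, and the {\bf (NN)}/{\bf (ND)} normalizations (which do not use $\sigma\ne1$: in {\bf (NN)} one uses $b$ near $1$, in {\bf (ND)} one pins the two marked critical values, using $v_{p-1}-v_p$ near $1$) then produce $g_1\in\Lambda_{\tilde f}$. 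With this correction your part (a) is complete, and part (b), which is stated only for {\bf (H)}, is unaffected.
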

Let us show (b). In this case
$\sigma(\tilde f)=\tilde f'(\infty)\not=1$, hence,
any $\hat g\in X_{\tilde f}$ has a unique fixed point $\beta$ which
is close to $\infty$. This defines a one-to-one map ${\bf B}$
between $X_{\tilde f}$ and $\Lambda_{d, \bar p'}$ by
${\bf B}: \hat g\mapsto \tilde g=M_\beta\circ \hat g\circ M_\beta^{-1}$, where
$M(z)=\beta z/(\beta-z)$. It is also easy to see that in this case
there is a natural map
${\bf A}: \tilde g\in \Lambda_{d, \bar p'}\mapsto
\tilde g_1\in \Lambda_{\tilde f}$ defined by
$\tilde g_1=A^{-1}\circ \tilde g\circ A$, where 
$A(z)=kz+e$ with $k=v_p(\tilde g)+\frac{b(\tilde g)}{\sigma(\tilde g)-1}$
and $e=-\frac{b(\tilde g)}{\sigma(\tilde g)-1}$.
The vector $\bar v(\hat g)=\{\frac{1}{\beta}, \bar v(\tilde g)\}\in 
{\bf C}^{p'+3}$ defines a complex-analytic structure on $X_{\tilde f}$,
and the map ${\bf A}\circ {\bf B}: X_{\tilde f}\to \Lambda_{\tilde f}$ 
is holomorphic.
Since, by~(\ref{XL}) above, $X_f$, $\Lambda_f$ are isomorphic to 
$X_{\tilde f}$, $\Lambda_{\tilde f}$, the part (b) follows.
Note in conclusion that the Mobius conjugacy between $g\in X_f$ and
$g_1\in \Lambda_f$, which we just constructed, tends to the
identity map uniformly on the Riemann sphere as $g$ tends to $f$.

\subsection{Main result}\label{mainres}
\begin{defi}\label{ccomdef}
A compact subset $K$
of the Riemann sphere is called {\it C-compact}, if there is a Mobius
transformation $M$, such that $M(K)$
is a compact subset of the complex plane with the property that
every continuous function on $M(K)$ can be
uniformly approximated by functions which are
holomorphic in a neighborhood of $M(K)$.
\end{defi}
Clearly, C-compact sets must have empty interior.
Vitushkin's theorem characterizes such compacts on the plane,
see e.g. \cite{Ga}.
As a simple corollary of this theorem, we have that
each of the following conditions is sufficient
for $K$ to be a C-compact set:

(1) $K$ has Lebesgue measure zero,

(2) every $z\in K$ belongs to the boundary
of a component of the complement of $K$.

We call a rational function $f$ {\it exceptional} if
$f$ is {\it double covered by an integral torus
endomorphism} (also known as {\it a flexible Lattes map}): 
these form a family of explicitly described 
critically finite rational maps with Julia sets which are
the whole Riemann sphere,
see, for example,~\cite{dht},~\cite{mcm},~\cite{mill}.

\

Let $f\in \Lambda_{d, \bar p'}$.
Fix a Mobius transformation $M$, such that $M(\infty)$ is disjoint
with the forward orbits of the critical points of $f$.
(If $f^n(c_j)\not=\infty$ for all $1\le j\le p'$ and all $n\ge 0$,
one can put $M$ to be the identity map.) 
Consider $\tilde f=M^{-1}\circ f\circ M$ in the space $\Lambda^M_{d, \bar p'}$
with the coordinate $\bar v^M$ (see Definition~\ref{Mspace}). 
We denote by $\tilde c_k=M^{-1}(c_k)$, $\tilde v_k=M^{-1}(v_k)$,
$1\le k\le p'$, the corresponding critical points and
critical values of $\tilde f$. 
As usual, 
$\frac{\partial (\tilde f^l (\tilde c_j))}{\partial \tilde v_k}$
etc. means $\frac{\partial (\tilde g^l (c_j(\tilde g)))}
{\partial v_k(\tilde g)}$ etc. evaluated at $\tilde g=\tilde f$.
\begin{theo}\label{onerat}

(a) Let $c_j$ be a weakly expanding critical point 
of $f$. 
Then,
for every $k=1,...,p'$, 
the following limits exist:
\begin{equation}\label{limratM}
L^M(c_j, v_k):=\lim_{l\to \infty}
\frac{\frac{\partial (\tilde f^l (\tilde c_j))}{\partial \tilde v_k}}
{(\tilde f^{l-1})'(\tilde v_j)},
\end{equation}
\begin{equation}\label{limsigmaM}
L^M(c_j, \sigma):=\lim_{l\to \infty}
\frac{\frac{\partial (\tilde f^l (\tilde c_j))}{\partial \sigma}}
{(\tilde f^{l-1})'(\tilde v_j)}, \ \ \ \ \
L^M(c_j, b):=\lim_{l\to \infty}
\frac{\frac{\partial (\tilde f^l (\tilde c_j))}{\partial b}}
{(\tilde f^{l-1})'(\tilde v_j)}.
\end{equation}
Moreover, if $p<p'$, i.e., $f$ has a critical value at infinity),
then, for $p+1\le j\le p'$ (i.e., when $v_j=\infty$) and
$1\le k\le p'$,
\begin{equation}\label{limratMinf}
L^M(c_j, v_k)=\delta_{j,k}, 
\end{equation}
\begin{equation}\label{limratMinf1}
L^M(c_j, \sigma)=L^M(c_j, b)=0.
\end{equation}

(b) Suppose that $f$ is not exceptional,
and $f$ has a collection of $r$ weakly expanding
critical points. Without loss of generality, one can assume
that $c_{j_1},...c_{j_r}$, $1\le j_1<...<j_r\le p'$,
are these points, and they are ordered as follows.
Denote by $\nu$ the number of such points with the corresponding
critical values to be in the complex plane.
Then denote by
$1\le j_1<...<j_\nu\le p$ the indexes of
these points, and, if $\nu<r$,
i.e., there are critical points from the collection
with the corresponding critical values at infinity, then
$r-\nu$ indexes of these points are the last ones:
$j_{\nu+1}=p'-(r-\nu-1),  j_{\nu+2}=p'-(r-\nu-2),...,  j_{r}=p'$.

Denote by $K$ the union of the $\omega$-limit sets
of these critical points.
Assume that $K$ 
is a C-compact.
Then the rank of the matrix ${\bf L^M}$ (defined shortly) is equal to $r$.

($H_\infty$). Let $f$ be of the type ${\bf (H)}$, i.e.,
$f(z)=\sigma z+O(1/z)$ as $z\to \infty$, where $\sigma\not=1$,
and also $v_p=1$. Then
\begin{equation}\label{rmdeltacutM}
{\bf L^M}=\left(L^M(c_{j_i}, \sigma), L^M(c_{j_i}, v_1),..., 
L^M(c_{j_i}, v_{p-1}), L^M(c_{j_i}, v_{p+1}),...,L^M(c_{j_i}, v_{p'})\right)_{1\le i\le r}
\end{equation}

($NN_\infty$). If $f$ of the type ${\bf (NN)}$, i.e., 
$\sigma=1$, $b\not=0$, and $v_p=1$, then
\begin{equation}\label{rmdeltacutnM}
{\bf L^{M}}=\left(L^M(c_{j_i}, v_1),..., L^M(c_{j_i}, v_{p-1}), 
L^M(c_{j_i}, v_{p+1}),..., L^M(c_{j_i}, v_{p'})\right)_{1\le i\le r}
\end{equation}

($ND_\infty$). Finally, if $f$ of the type ${\bf (ND)}$:
$\sigma=1$, $b=0$, $v_{p-1}=1, v_p=0$, then 
\begin{equation}\label{rmdeltacutnnM}
{\bf L^{M}}=\left(L^M(c_{j_i}, v_1),..., 
L^M(c_{j_i}, v_{p-2}), L^M(c_{j_i}, v_{p+1}),...,
L^M(c_{j_i}, v_{p'})\right)_{1\le i\le r}
\end{equation}
\end{theo}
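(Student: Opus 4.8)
The plan is to mirror closely the strategy already used for the polynomial case (Theorem~\ref{one}), adapting each of the three ingredients listed in the introduction to the rational setting. Part (a) should follow almost verbatim from the polynomial argument once we work in the space $\Lambda^M_{d,\bar p'}$, where all critical orbits lie in the plane. Specifically, I would first establish a rational analogue of Proposition~\ref{derlem}: the partial derivatives $\frac{\partial \tilde f}{\partial \tilde v_k}(z)$, $\frac{\partial \tilde f}{\partial \sigma}(z)$, $\frac{\partial \tilde f}{\partial b}(z)$ are rational functions with controlled poles (poles only at $\infty$ or at the critical points, by Proposition~\ref{dergenlem} applied to the coordinate $\bar v^M$), and they satisfy $\frac{\partial \tilde f}{\partial \tilde v_k}(\tilde c_j)=\delta_{j,k}$ plus the vanishing conditions at the other critical points. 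Then the chain-rule identity~(\ref{5}) holds verbatim for $\tilde f^l$, and letting $z\to \tilde c_j$ gives the analogue of~(\ref{6}). The summability of $c_j$ together with the summand estimate of~(\ref{summ}) — note that the spherical weight $\frac{1+|f^n(v)|^2}{1+|v|^2}$ is exactly what controls the growth of $\frac{\partial\tilde f}{\partial\tilde v_k}(f^n(\tilde v_j))$ even when the orbit is unbounded — forces the tail series to converge absolutely, so the limit $L^M(c_j,v_k)$ exists and equals $\delta_{j,k}$ plus a convergent series. The special formulas~(\ref{limratMinf})--(\ref{limratMinf1}) for critical points with value at infinity follow because, in the coordinate where $1/v_j$ is used, the relevant derivatives vanish along the orbit.

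For part (b), I would reproduce the Ruelle-operator scheme. First, Lemma~\ref{formula} already gives, for any rational $f$ fixing $\infty$, the resolvent-type identity $T_f\frac{1}{z-x}=\frac{1}{f'(z)}\frac{1}{f(z)-x}+\sum_j \frac{L_j(z)}{x-v_j}$ with the $L_j$ explicitly described; combining with the rational version of Proposition~\ref{derlem} identifies $L_j(z)=\frac{\frac{\partial f}{\partial v_j}(z)}{f'(z)}$ up to the extra terms coming from the two additional coordinates $\sigma,b$ (these contribute the "$\frac{1}{f'(z)}\frac{1}{f(z)-x}$"-type and constant terms, which is why $\sigma$ appears as a column in the $({\bf H}_\infty)$ case). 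This yields a rational analogue of Proposition~\ref{main}: the formal series $\varphi_{z,\lambda}$ satisfies $\varphi_{z,\lambda}-\lambda T_f\varphi_{z,\lambda}=\frac{1}{z-x}+\lambda\sum_k\frac{1}{v_k-x}\Phi_k(\lambda,z)+(\text{terms involving }\sigma,b)$. Specializing $\lambda=1$, $z=v_j$ and using part (a) gives the analogue of Proposition~\ref{vaznoe}: $H_j-T_f H_j=\sum_k \frac{L^M(c_j,v_k)}{v_k-x}$ (with appropriate $\sigma,b$ terms). If the rank of ${\bf L^M}$ were less than $r$, a nontrivial combination $H=\sum a_j H_j$ would be an integrable fixed point of $T_f$, holomorphic off $J$ in the $\Lambda_{\tilde f}$-normalized picture. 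The contraction property of $T_f$ (with the Siegel-disk case handled as in~\cite{Le}, cf. Lemma~\ref{contr}) forces $H=0$ off $J$; then the $C$-compactness of $K=\bigcup\omega(c_{j_i})$ — which is exactly where the hypothesis enters, replacing the Vitushkin/basin-of-infinity argument of the polynomial case — lets us conclude via Vitushkin's theorem that the measure $\mu=\sum\alpha_k\delta(b_k)$ representing $H$ annihilates all functions holomorphic near $K\supset\{b_k\}$, hence $\mu=0$ and $H\equiv 0$. Finally, the bookkeeping with coinciding critical values (Lemma~\ref{stranno} and Corollary~\ref{lemravnie}) goes through unchanged, and the periodic-orbit analysis rules out nontrivial relations among the distinct $H_j$; the non-exceptionality of $f$ is used precisely to exclude the degenerate Lattes situation where the contraction argument would fail (there the operator $T_f$ has an integrable fixed point coming from the affine structure).

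The main obstacle, I expect, is twofold. First, the correct treatment of the two "extra" coordinates $\sigma$ and $b$: one must verify that $\frac{\partial f}{\partial\sigma}(z)$ and $\frac{\partial f}{\partial b}(z)$ are, respectively, (a multiple of) $z$ plus lower order and a constant — i.e., that the identity of Lemma~\ref{formula} together with the normalizations defining $\Lambda_{\tilde f}$ in the cases $({\bf H})$, $({\bf NN})$, $({\bf ND})$ produces exactly the columns displayed in~(\ref{rmdeltacutM})--(\ref{rmdeltacutnnM}), and that restricting to $\Lambda_{\tilde f}$ (i.e., freezing $b$ and one critical value) removes exactly the redundant directions without losing rank. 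This is the content of the as-yet-unstated Propositions~\ref{mainrat}--\ref{vaznoerat}. Second, and more serious, is the passage between the space $\Lambda^M_{d,\bar p'}$ (where part (a) is proved, convenient because orbits avoid $\infty$) and the space $\Lambda_{\tilde f}$ (where the Ruelle operator of a map fixing $\infty$ acts and the contraction argument lives): one needs that the two coordinate systems are related by a biholomorphism whose derivative is triangular enough that maximal rank is preserved, and that the $C$-compactness of $K$ is a Mobius-invariant hypothesis (which it is, by Definition~\ref{ccomdef}), so the Vitushkin step can be carried out after moving $K$ into the plane. Handling the unbounded critical orbits in the estimates for $T_f$ and for $H_j$ — keeping careful track of the spherical normalization throughout — is where most of the genuinely new technical work sits, the rest being a faithful transcription of the polynomial proof.
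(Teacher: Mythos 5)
Your outline of part (a) and of the general contraction/Vitushkin scheme for part (b) is broadly in the right spirit, but there is a concrete gap at the heart of part (b), precisely at the point you yourself flag as "the main obstacle": the treatment of the coordinates $\sigma,b$ and of the frozen critical value(s) defining $\Lambda_{\tilde f}$. First, your proposed mechanism is wrong: in the rational case the operator identity is \emph{exactly} the same as in the polynomial case, $T_f\frac{1}{z-x}=\frac{1}{f'(z)}\frac{1}{f(z)-x}+\sum_{k=1}^{p}\frac{\partial f/\partial v_k(z)}{f'(z)}\frac{1}{x-v_k}$ (Proposition~\ref{paramrat}); no extra terms "coming from $\sigma$ and $b$" appear, and the $\sigma$-column in ${\bf L}$ enters the matrix not through the identity but because the normalization $v_p(g)\equiv 1$ (resp.\ $b$, $v_{p-1},v_p$) removes those columns. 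Consequently, a linear dependence among the rows of ${\bf L^M}$ only kills the coefficients of $\frac{1}{v_k-x}$ for $k\le p-1$ (plus the $\sigma$-relation), so the combination $H=\sum_j a_j H_j$ satisfies $H-T_fH=\frac{L}{v_p-x}$ with $L=\sum_j a_jL(c_j,v_p)$ \emph{not} known to vanish, and moreover $H$ itself is in general not integrable at infinity. The paper's key new step, absent from your proposal, is the regularization $\hat H(z)=H(z)+A/z+B/z^2$ (Lemma~\ref{int}), the asymptotics $T_f\frac1z=\frac{1}{\sigma z}+\frac{b}{\sigma z^2}+O(z^{-3})$, $T_f\frac{1}{z^2}=\frac{1}{z^2}+O(z^{-3})$, and the resulting relations $A(1-1/\sigma)-L=0$, $-Ab/\sigma-Lv_p=0$, which in each case ${\bf (H)}$, ${\bf (NN)}$, ${\bf (ND)}$ force $L=0$ and $A=0$; the relation $\sum_j a_jL(c_j,\sigma)=0$ is then used exactly once, to compute $B=\sigma\sum_j a_jL(c_j,\sigma)=0$, so that $\hat H=H$ is a genuinely integrable fixed point to which the contraction Lemma~\ref{contr} (whose hypotheses involve $A$, $B$, $\sigma$, $b$) applies. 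Without this step the argument "a nontrivial combination $H$ would be an integrable fixed point of $T_f$" does not go through.

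Two smaller points. In part (a) the paper does not work "almost verbatim in $\Lambda^M_{d,\bar p'}$": the convergence estimate is carried out in the original space $\Lambda_{d,\bar p'}$, where Proposition~\ref{derlemrat} gives $\partial f/\partial v_k=\tilde P/Q^2$ and hence $|\partial f/\partial v_k(z)|\le C_k(1+|f(z)|^2)$, matching the spherical weight in~(\ref{summ}); the limits are then transported to the $M$-coordinates by the explicit conjugation formulas of Proposition~\ref{derlemratM}, and the cases where the orbit hits $\infty$, or $v_j=\infty$, are handled by finite sums and by $\frac{\partial\tilde f}{\partial\tilde v_k}(\beta)=0$. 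If you insist on working directly in $\Lambda^M$ you must justify the analogous growth bound there, which is not immediate since $\tilde f$ need not fix infinity. Finally, the reduction from ${\bf L^M}$ to the smaller matrix ${\bf L}$ (Corollary~\ref{lmtol}), using the identity block produced by~(\ref{limratMinf})--(\ref{limratMinf1}), should be stated explicitly; it is the bookkeeping that makes the rank statement for ${\bf L^M}$ follow from the rank statement proved via the Ruelle operator.
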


\

{\bf Proof of Theorem~\ref{tworatintro}}.

We apply the part (b) of Theorem~\ref{onerat}
exactly as we apply Theorem~\ref{one}
in the Comment~\ref{onecoro}, and then apply Proposition~\ref{gen}.
This ends the proof.

\

{\bf Proof of Corollary~\ref{misiu}}

Since 
$K=\cup_{j=1}^{2d-2}\omega(c_j)$ is a hyperbolic set for $f$, and,
hence, is of measure zero, Theorem~\ref{tworatintro} applies.
As every critical point of $f$ is simple and summable,
$p'=r=2d-2$, i.e., $\Lambda=\Lambda_f$, 
and $X_f$ is the (local) space
of {\it all} close to $f$ rational functions of degree $d$. 
Let $\bar x=(x_1,...,x_{2d-2})$ be a local coordinate in $\Lambda$.
Since all critical points of $f$ are summable, $f$ has no 
parabolic periodic orbits. Hence, $f$ is of type ${\bf H}$,
and the
part (b) of Proposition~\ref{gen} applies:
$X_f$ is a complex-analytic manifold, and the 
correspondence $\pi: g\in X_f\mapsto \hat g\in \Lambda_f$ 
(from Proposition~\ref{gen} (a))
is a holomorphic map. Observe, that the restriction
of $\pi$ on $N\subset X_f$ is one-to-one, because otherwise
two different $g_1, g_2\in N$ would be equivalent to the same
$\pi(g_1)=\pi(g_2)$. Hence, the restriction 
$\pi|_N: N\to \Lambda_f$ is a local analytic isomorphism.
This is equivalent to say that, 
in the coordinates $\lambda$ of $N$ and $\bar x$ of $\Lambda$, 
the derivative of $\pi|_N$ at $\lambda_0$
is a non-degenerate $(2d-2)\times (2d-2)$ matrix $\pi'_0$.
In turn, the part (b) of Theorem~\ref{tworatintro}
is equivalent to the statement that the matrix
${\bf L}_f=(L_{j,k})_{1\le j,k\le 2d-2}$,
where 
$$L_{j,k}=\lim_{l\to \infty}\frac{\frac{\partial f^l(c_j)}{\partial x_k}}
{(f^{l-1})'(v_j)},$$
is non-degenerate.
The rest of the proof is a straightforward application of this fact.
For every $l$ big enough, 
we introduce the following objects:

(1) an approximation matrix
${\bf L}^{(l)}_f=(L^{(l)}_{j,k})_{1\le j,k\le 2d-2}$,
where 
$$L^{(l)}_{j,k}=\frac{\frac{\partial f^l(c_j)}{\partial x_k}}{(f^{l-1})'(v_j)},$$

(2) two maps from the neighborhood $W$ of $\lambda_0$ into 
${\bf C}^{2d-2}$:
the map $C_l(\lambda)=\{f^l_\lambda(c_j(\lambda))\}_{1\le j\le 2d-2}$,
and the map $\bar a_l(\lambda)=\{a_j(\lambda)\}_{1\le j\le 2d-2}$
(recall that $z(\lambda)$ is the holomorphic motion of the point
$z=z(\lambda_0)\in K$, and
$a_j=f^l(c_j)\in K$),

(3) a diagonal $2d-2\times 2d-2$ matrix 
$D_l$ with the diagonal $(j,j)$ element
$(f^{l-1})'(v_j)$, $1\le j\le 2d-2$. 
Note that $(f^{l-1})'(v_j)\to \infty$ as $l\to \infty$. 

Since the family of analytic functions
$\{z(\lambda)\}$, where $\lambda\in W$,
is bounded, then the norms of the derivative $\bar a'_l$
of $\bar a_l(\lambda)$ at $\lambda_0$
are uniformly (in $l$) bounded, too.
By the definition, ${\bf h}_l(\lambda)=C_l(\lambda)-\bar a_l(\lambda)$, 
hence, for the derivatives at $\lambda=\lambda_0$,
${\bf h}_l'=C_l'-\bar a'_l$. 
By linear algebra,
we have a representation:
$$C_l'=D_l {\bf L}^{(l)}_f \pi'_0.$$
Therefore,
$${\bf h}_l'=D_l {\bf L}^{(l)}_f \pi'_0 - \bar a_l'=
D_l \left({\bf L}^{(l)}_f \pi'_0 - D_l^{-1} \bar a_l' \right).$$
As $l$ tends to $\infty$, 
${\bf L}^{(l)}_f$ tends
to the invertible matrix ${\bf L}_f$, and $D_l^{-1}$ tends to 
zero matrix. It follows, for every $l$ big enough, ${\bf h_l}'$ is invertible.
Moreover,
asymptotically, $({\bf h}_l')^{-1}$ is  
$(\pi'_0)^{-1} {\bf L}_f^{-1} D_l^{-1}$. This ends the proof.

\

{\bf Proof of Corollary~\ref{cormak}}

Let us first consider the case when $f$ is exceptional~\cite{dht}.
Since all critical points of $f$ are simple,
then $X_f$ is the (local)
space of all rational functions ${\bf Rat}_d$ of degree $d$ which are
close to $f$, and $\Lambda_f$ is its $2d-2$-dimensional submanifold.
As $f$ has no neutral cycles, by Proposition~\ref{gen} (b),
we have a well-defined map $g\in X_f\mapsto g_1\in \Lambda_f$,
where $g$ is equivalent to $g_1$,
and the Mobius conjugacy between them tends to the identity
uniformly on the Riemann sphere as $g\to f$ 
(see the end of Subsection~\ref{disc}).
It implies that $f$ is structurally stable in $X_f$ if and only if
$f$ is structurally stable in $\Lambda_f$. 
On the other hand, as $f$ is critically finite, it is easy to see that 
the critical relations of $f$ cannot be preserved for
all maps in $X_f={\bf Rat}_d$ which are close to $f$. Hence, 
$f$ is not structurally stable in $X_f$ and, then, in $\Lambda_f$.

Now, let $f$ be a non-exceptional map.
One can assume that $\omega(c)$ is a subset of the complex plane.
By applying Theorem~\ref{tworatintro} to $f$ and $c$, there exists
a one-dimentional submanifold $\Lambda$
(i.e., an analytic one-dimensional family $f_t$, $f_0=f$) of $\Lambda_f$,
such that the limit 
\begin{equation}\label{familyratcor}
\lim_{m\to \infty}\frac{\frac{d}{dt}|_{t=0}f_t^m (c(t))}{(f^{m-1})'(f(c))}
\end{equation}
exists and is a non-zero number.
(Here $c(t)$ is the critical point of $f_t$ which is close to
$c(0)=c$.)
This obviously implies that
\begin{equation}\label{notnormcor}
\lim_{m\to \infty}\frac{d}{dt}|_{t=0}f_t^m (c(t))=\infty.
\end{equation}
This is enough to conclude that $f$ is not structurally stable in
the family $f_t$, hence, in $\Lambda_f$, too.
Indeed,
assuming, by a contradiction, that $f$ is structurally stable in $f_t$,
we get that the $\omega$-limit set $\omega(c(t))$
of the critical point $c(t)$ of $f_t$ 
tends to $\omega(c(0))=\omega(c)$ as $t\to 0$
while the compact $\omega(c)$ lies in the plane.
In particular, the sequence of holomorphic functions
$\{f_t^m (c(t))\}_{m\ge 0}$ is uniformly bounded
in a neigborhood of $t=0$, which is a contradiction with~(\ref{notnormcor}). 
This ends the proof.

\section{Part (a) of Theorem~\ref{onerat}}
As in the polynomial case, we start by calculating partial derivatives 
of a function $g\in \Lambda_{d, \bar p'}$ 
w.r.t. the standard local coordinates
of the space $\Lambda_{d, \bar p'}$, 
i.e., $\sigma$, $b$, and the critical values
which are {\it not} infinity.
\begin{prop}\label{derlemrat}
Fix a function $f\in \Lambda_{d, \bar p'}$, where 
$f(z)=\sigma z + b + P(z)/Q(z)$.

(a) Let $v_k=f(c_k)$ be a critical value of $f$ which is
different from infinity (i.e., $1\le k\le p$). 
Then
$\frac{\partial f}{\partial v_k}(z)$ is a rational function $q_k(z)$ 
of degree $2d-2$ be of the form $\frac{\tilde P(z)}{(Q(z))^2}$,
where $\tilde P$ is a polynomial of degree at most $2d-3$.
It is uniquely characterized by the
following conditions: 

(i) at $z=c_k$, the function
$q_k(z)-1$ has zero of order
at least $m_k$; 

(ii) at $z=c_j$, for every $j\not=k$, $1\le j\le p$,
$q_k(z)$ has zero of order at least $m_j$; 

(iii) finally,
at $z=c_j$, for every $p+1\le j\le p'$,
the polynomial $\tilde P(z)$ has zero of order at least $m_j$.

In particular, if $c_k$ is simple (i.e., $m_k=1$),
then~(\ref{dersimp}) holds:
\begin{equation}\label{dersimprat}
\frac{\partial f}{\partial v_k}(z)=\frac{f'(z)}{f''(c_k)(z-c_k)}.
\end{equation}

(b) We have, as well:
\begin{equation}\label{sigma}
\frac{\partial f}{\partial \sigma}(z)=\frac{z}{\sigma}f'(z), \ \ \ \ 
\frac{\partial f}{\partial b}(z)=\frac{1}{\sigma}f'(z).
\end{equation}
\end{prop}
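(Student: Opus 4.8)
The plan is to mimic the polynomial case (Proposition~\ref{derlem}), but keep track of the denominator $Q(z)$. First I would invoke Proposition~\ref{rlocal}: the vector $\bar v(g)=\{\sigma(g),b(g),v_1(g),\dots,v_{p'}(g)\}$ (with $1/v_j(g)$ in place of $v_j(g)$ for $p<j\le p'$) is a holomorphic coordinate on $\Lambda_{d,\bar p'}$, so the coefficients $\sigma(g),b(g),A_i(g),B_i(g)$ appearing in~(\ref{rrg}) are holomorphic functions of this coordinate. Differentiating the identity $g(z)=\sigma(g)z+b(g)+P_g(z)/Q_g(z)$ with respect to any one coordinate and evaluating at $g=f$ shows at once that $\frac{\partial f}{\partial v_k}(z)$ is of the form $\frac{\tilde P(z)}{Q(z)^2}$ with $\deg\tilde P\le 2d-3$ (the $\sigma z$ and $b$ terms contribute nothing since those coordinates are held fixed, and $\frac{\partial}{\partial v_k}\bigl(P_g/Q_g\bigr)=\frac{(\partial P_g)Q_g-P_g(\partial Q_g)}{Q_g^2}$, a ratio whose numerator has degree at most $(d-2)+(d-1)=2d-3$). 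For part (b), since $\sigma$ and $b$ enter $g$ linearly and independently of the other coordinates, $\frac{\partial f}{\partial\sigma}(z)=z$ and $\frac{\partial f}{\partial b}(z)=1$ as functions on the sphere -- but I must express these in terms of $f'$: from $f(z)=\sigma z+b+P/Q$ one has $f'(z)=\sigma+(P/Q)'$, and a short computation of the analogous $z$-derivative identity yields $\frac{\partial f}{\partial\sigma}(z)=\frac{z}{\sigma}f'(z)$ and $\frac{\partial f}{\partial b}(z)=\frac{1}{\sigma}f'(z)$; I would verify this by checking that the difference of the two sides is a rational function that vanishes identically (same poles, same behavior at infinity, agreeing value).

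The substance is in the characterization (i)--(iii) of part (a). Here I would apply Proposition~\ref{dergenlem}, which is stated for an arbitrary space $N$ of rational maps with local coordinates: with $\bar x=\bar v$ and critical point $c(g)=c_j(g)$, and using $\frac{\partial v_j}{\partial v_k}=\delta_{j,k}$, it gives that $\frac{\partial f}{\partial v_k}(z)-\delta_{j,k}$ has a zero of order at least $m_j$ at $z=c_j$, for every $j$ with $1\le j\le p$ (so that both $c_j$ and $v_j$ are finite, as the Proposition requires). This yields (i) for $j=k$ and (ii) for $j\ne k$, $1\le j\le p$. For (iii), i.e.\ the critical points $c_j$ with $p<j\le p'$ whose critical value $v_j=\infty$, the coordinate being varied is $v_k$ (finite), not $v_j$, so I cannot apply Proposition~\ref{dergenlem} directly at such a $c_j$ because $v_j\notin\mathbf C$. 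Instead I would argue that $c_j(g)$ remains a critical point of $g$ of multiplicity $m_j$ for all $g\in\Lambda_{d,\bar p'}$, hence a zero of multiplicity $m_j$ of $\hat P_g'Q_g-\hat P_gQ_g'$ where $\hat P_g=Q_g(\sigma(g)z+b(g))+P_g$; differentiating the critical-point equation in $v_k$ and using that $g(c_j(g))\to\infty$, one sees that the numerator $\tilde P$ of $\frac{\partial f}{\partial v_k}$ must vanish to order $\ge m_j$ at $c_j$ (equivalently, $\frac{\partial f}{\partial v_k}$ has a zero there of order $\ge m_j$ after cancelling against the pole of $1/Q(z)^2$ only if $c_j$ is also a root of $Q$, which it need not be -- so the vanishing is of $\tilde P$ itself). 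This is the one place requiring genuine care, and I expect it to be the main obstacle: reconciling ``critical point persists'' with ``critical value is at infinity'' forces one to phrase the condition on $\tilde P$ rather than on $\frac{\partial f}{\partial v_k}$.

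Finally, uniqueness: a rational function of the form $\tilde P/Q^2$ with $\deg\tilde P\le 2d-3$ lives in a vector space of dimension $2d-2$, and conditions (i)--(iii) impose $\sum_{j=1}^{p}m_j+\sum_{j=p+1}^{p'}m_j=\sum_{j=1}^{p'}m_j=2d-2$ linear constraints; since we have exhibited a solution, and since the constraints are easily seen to be independent (they are localized at distinct points $c_1,\dots,c_{p'}$), the solution is unique. The special simple-critical-point formula~(\ref{dersimprat}) then follows by checking that $\frac{f'(z)}{f''(c_k)(z-c_k)}$ satisfies (i)--(iii): $f'$ has the correct zeros of orders $m_j-1+\ldots$ at the other critical points and a simple zero at $c_k$, so dividing by $(z-c_k)$ and normalizing by $f''(c_k)$ gives value $1$ at $c_k$; that it has the form $\tilde P/Q^2$ with the right degree is immediate since $f'=(\hat P'Q-\hat PQ')/Q^2$.
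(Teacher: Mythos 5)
Your treatment of part (b) starts from a false premise. On $\Lambda_{d,\bar p'}$ the coordinates are $\sigma$, $b$ and the (finite or reciprocated) critical values, and the coefficients of $P_g$, $Q_g$ are holomorphic functions of this whole coordinate vector; when you differentiate in $\sigma$ holding the critical values fixed, $P_g$ and $Q_g$ must change, so $\frac{\partial f}{\partial \sigma}(z)=z+R(z)/Q(z)^2$ for some polynomial $R$ of degree at most $2d-3$, not $z$. Indeed, if $\frac{\partial f}{\partial \sigma}$ were $z$, the identity you are trying to prove, $\frac{\partial f}{\partial \sigma}=\frac{z}{\sigma}f'$, would force $f'\equiv\sigma$, impossible for $d\ge 2$. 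The ``short computation'' you defer is exactly where the work lies, and it requires inputs you never establish for the $\sigma$- and $b$-derivatives: by Proposition~\ref{dergenlem} applied with $x=\sigma$ (using $\frac{\partial v_j}{\partial \sigma}=0$), each $c_j$ with $1\le j\le p$ is a zero of $\frac{\partial f}{\partial \sigma}$ of order at least $m_j$, and at each pole $c_j$, $p<j\le p'$, the numerator $P^\sigma=zQ^2+R$ vanishes to order at least $m_j$; only with these facts does $\frac{\partial f}{\partial \sigma}/f'$ become a rational function without poles in the plane which behaves like $z/\sigma+O(1/z)$ at infinity, hence equals $z/\sigma$. This is the paper's argument, and your sketch does not reach it.

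For (iii) you correctly identify the obstacle, but your proposed fix is not a proof. First, for $p<j\le p'$ the point $c_j$ is necessarily a root of $Q$ (of multiplicity $m_j+1$), contrary to your parenthetical claim that it ``need not be''; this is precisely why the condition must be placed on $\tilde P$ rather than on $q_k$. Second, ``differentiating the critical-point equation'' (that $c_j(g)$ is an $m_j$-fold root of $\hat P_g'Q_g-\hat P_gQ_g'$) only controls $g'$, not $\frac{\partial g}{\partial v_k}$; the essential input is that the $j$-th coordinate $1/v_j(g)$ is held fixed while $v_k$ varies. The paper implements this by passing to the reciprocal family $N^*=\{1/g\}$, in which $c_j$ is a critical point of the same multiplicity with finite critical value $1/v_j(g)$, and applying Proposition~\ref{dergenlem} there: since $\frac{\partial (1/v_j)}{\partial v_k}=0$, the function $\frac{\partial f^*}{\partial v_k}=-\tilde P/\hat P^2$ vanishes to order at least $m_j$ at $c_j$, and since $\hat P(c_j)\ne 0$, so does $\tilde P$. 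Your outline omits this step (or any equivalent of it), so (iii) is not actually established. The remaining parts --- the form $\tilde P/Q^2$, conditions (i)--(ii) via Proposition~\ref{dergenlem}, uniqueness by the count $\sum_j m_j=2d-2>2d-3$, and the simple-critical-point formula --- are fine and agree with the paper.
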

\begin{proof}
Let $x$ be one of the following coordinates of the vector $\bar v(g)$, for
$g\in \Lambda_{d, \bar p}$:
$x$ is either $v_k$, for $1\le k\le p$, or
$\sigma$, or $b$. Notice that, for $x=v_k$:
\begin{equation}\label{xvk}
\frac{\partial v_j(g)}{\partial x}=\delta_{j,k}, \ \ \ 1\le j\le p,
\end{equation}
and, for $x=\sigma$ or $x=b$:
\begin{equation}\label{xother}
\frac{\partial v_j(g)}{\partial x}=0, \ \ \ 1\le j\le p.
\end{equation} 
As $g\in \Lambda_{d, \bar p'}$ is close to $f$, then
$g(z)=\sigma(g)z+b(g)+\frac{P_g(z)}{Q_g(z)}$,
where $P_g(z)=B_0(g)z^{d-2}+...$, $Q_g(z)=z^{d-1}+...$
are polynomials with the coefficients which are holomorphic
functions of the vector $\bar v(g)$, and
$P_f=P$, $Q_f=Q$. 
It follows, $\frac{f}{\partial v_k}=\frac{\tilde P}{Q^2}$, where
$\tilde P=
\frac{\partial P}{\partial v_k} Q-P \frac{\partial Q}{\partial v_k}$
is a polynomial of degree at most $2d-3$.
We apply Proposition~\ref{dergenlem} to the coordinate $x$ as above
and take into account~(\ref{xvk})-(\ref{xother}). 
We conclude that,
for every $1\le j\le p$, $c_j$ is at least a $m_j$-multiple
zero of the functions
$\frac{\partial f}{\partial v_k}(z)-\delta_{j,k}$,
$\frac{\partial f}{\partial \sigma}(z)$, and 
$\frac{\partial f}{\partial b}(z)$.
In particular, all this proves the part (a) except for the last
property (iii).

Let $\hat P(z)=(\sigma z+b)Q(z)+P(z)$, that is,
$f=\frac{\hat P}{Q}$. Then
$\frac{\partial f}{\partial x}(z)$
is a rational function of the form   
$\frac{P^x(z)}{(Q(z))^2}$,
where 
\begin{equation}\label{tildep}
P^x=\frac{\partial \hat P}{\partial x} Q-\hat P \frac{\partial Q}{\partial x}.
\end{equation}
Now, let $p+1\le j\le p'$, i.e., $c_j$ is a root of $Q(z)$,
such that $Q(z)=(z-c_j)^{m_j+1}\psi(z)$, where $\psi$ is analytic near
$c_j$ and $\psi(c_j)\not=0$.
We cannot apply Proposition~\ref{dergenlem} directly in this case,
because $v_j=f(c_j)=\infty$.
To get around this, let us introduce a new (local) space
$N^*$ consisting of the functions $g^*(z):=1/g(z)$,
for $g\in \Lambda_{d, \bar p'}$ near $f$.  
Notice that the vector $\bar v(g)$ of coordinates in $\Lambda_{d, \bar p'}$
is also a local
coordinate in $N^*$, the critical points of $g^*$ and $g$
(with the corresponding multiplicities) coincide while for
the critical values $g^*$ and $g$, we have:
$v_j^*(g^*)=1/v_j(g)$. In particular,
$v_j^*=f^*(c_j)=0$.
Using the above notations, we have:
\begin{equation}\label{hatn}
\frac{\partial f^*}{\partial x}(z)=
-\frac{\frac{\partial f}{\partial x}(z)}{(f(z))^2}=
-\frac{P^x(z)}{(\hat P(z))^2}.
\end{equation}
On the other hand, we can apply Proposition~\ref{dergenlem}
to the space $N^*$, the map $f^*$, its critical point $c_j$
(with the corresponding critical value to be $0$),
and the coordinate $x$. Since $p<j\le p'$,
the critical value $v_j^*(g^*)=1/v_j(g)$
of $g^*$ is just the $j$-coordinate
of the vector of coordinates $\bar v(g)$ in $N^*$, while
$x$ (which is either $v_k(g)$, for
$1\le j\le p$, or $\sigma(g)$, or $b(g)$) is a different coordinate
of $\bar v(g)$. Hence, we also have:
$$\frac{\partial v_j^*}{\partial x}=0.$$
Then, by Proposition~\ref{dergenlem}, $c_j$ is at least $m_j$-multiple
root of $\frac{\partial f^*}{\partial x}(z)$, hence,
by~(\ref{hatn}), $c_j$ is at least $m_j$-multiple
root of $P^x(z)$, as well.
In particular, this holds for $\tilde P=P^{v_k}$, which proves 
(iii) of part (a).

As for part (b), $x$ is either $\sigma$ or $b$. Let $x=\sigma$.
Then
$$\frac{\partial f}{\partial \sigma}(z)=
z+\frac{R(z)}{(Q(z))^2}=\frac{P^\sigma(z)}{(Q(z))^2},$$
where 
$$R=\frac{\partial P}{\partial \sigma} Q-
P \frac{\partial Q}{\partial \sigma}$$
is a polynomial of degree at most $2d-3$, and $P^\sigma=z Q^2+R$.
In particular, $\frac{\partial f}{\partial \sigma}(z)=
z+O(1/z)$. On the other hand,
as we have seen, for every $1\le j\le p$, $c_j$ is at least a $m_j$-multiple
zero of $\frac{\partial f}{\partial \sigma}(z)$,
and, for $p<j\le p'$, $c_j$ is at least a $m_j$-multiple zero of $P^\sigma$.
Therefore, $\frac{\partial f}{\partial \sigma}(z)/f'(z)=
(z+O(1/z))/(\sigma +O(1/z^2))$, and, at the same time,
it is a rational function without poles in the plane.
Hence, it must be equal to $z/\sigma$.
The proof for $\partial f/\partial b$ is very similar
and is left to the reader.
\end{proof}
As in the polynomial case, we then have:
\begin{prop}\label{paramrat}
Let $f\in \Lambda_{d,\bar p'}$. Then
\begin{equation}\label{parameqrat}
T_f\frac{1}{z-x}=
\frac{1}{f'(z)}\frac{1}{f(z)-x}+
\sum_{k=1}^{p}\frac{\frac{\partial f}{\partial v_k}(z)}{f'(z)}
\frac{1}{x-v_k}.
\end{equation}
\end{prop}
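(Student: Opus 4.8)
The plan is to mimic the proof of Proposition~\ref{param} in the polynomial case, with Proposition~\ref{derlemrat} playing the role of Proposition~\ref{derlem}. Since $f\in\Lambda_{d,\bar p'}$ has the form~(\ref{rrr}) with $\sigma\neq 0,\infty$, it fixes $\infty$, so Lemma~\ref{formula} applies: it furnishes functions $L_1,\dots,L_p$, attached to the critical points $c_1,\dots,c_p$ whose critical values $v_1,\dots,v_p$ are finite, for which the Cauchy-type identity~(\ref{rcauchy1}) holds, and each $L_k$ has the explicit partial-fraction form~(\ref{tochno}) and obeys the vanishing property (2) of that lemma. Hence the whole argument reduces to identifying $L_k(z)=\frac{\partial f}{\partial v_k}(z)/f'(z)$ for $1\le k\le p$; once this is done,~(\ref{parameqrat}) is obtained simply by inserting it into~(\ref{rcauchy1}).

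To make that identification I would verify that $f'(z)L_k(z)$ satisfies the three conditions characterizing $q_k(z)=\frac{\partial f}{\partial v_k}(z)$ in Proposition~\ref{derlemrat}(a), and then invoke the uniqueness asserted there. Write $f=\hat P/Q$ and $N=\hat P' Q-\hat P Q'$, so that $f'=N/Q^2$ with $\deg N=2d-2$. Because $c_k$ is a zero of $f'$ of order exactly $m_k$ while $Q(c_k)\neq 0$ (the critical value $v_k$ being finite), we get $N=(z-c_k)^{m_k}\tilde N_k$ with $\tilde N_k(c_k)\neq 0$; and by~(\ref{tochno}), $L_k=R_k/(z-c_k)^{m_k}$ for a polynomial $R_k$ with $\deg R_k\le m_k-1$. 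So the factor $(z-c_k)^{m_k}$ cancels and $f'L_k=\tilde N_k R_k/Q^2$ is a rational function of the form $\tilde P/Q^2$ with $\deg\tilde P\le(2d-2-m_k)+(m_k-1)=2d-3$, exactly the shape required in Proposition~\ref{derlemrat}(a). Conditions (i) and (ii) there — that $f'L_k-\delta_{j,k}$ has a zero of order at least $m_j$ at $c_j$ for $1\le j\le p$ — are precisely property (2) of Lemma~\ref{formula}. For condition (iii) I would use that for $p+1\le j\le p'$ the critical point $c_j$ is a pole of $f$, hence a root of $Q$ of order $m_j+1$; factoring $N=\hat P' Q-\hat P Q'$ at such a root shows $(z-c_j)^{m_j}\mid N$, and since $c_j\neq c_k$ it follows that $(z-c_j)^{m_j}$ divides $\tilde P=\tilde N_k R_k$, so $\tilde P$ vanishes to order at least $m_j$ at $c_j$. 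With all three conditions checked, the uniqueness in Proposition~\ref{derlemrat}(a) gives $f'L_k=\frac{\partial f}{\partial v_k}$, as desired.

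The only feature absent from the polynomial case is the presence of poles of $f$ among the critical points (those $c_j$ with $v_j=\infty$), and accordingly the one place that needs a genuine — though entirely routine — computation is condition (iii): determining the order of vanishing of $\hat P' Q-\hat P Q'$ at a multiple root of $Q$. Everything else is formal bookkeeping, and once the $L_k$ are identified the identity~(\ref{parameqrat}) drops out of~(\ref{rcauchy1}) with no further work.
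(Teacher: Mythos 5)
Your proof is correct and takes essentially the same route as the paper: both identify $f'(z)L_k(z)$ with $q_k(z)=\frac{\partial f}{\partial v_k}(z)$ through the characterization and uniqueness in Proposition~\ref{derlemrat}(a), using assertions (1)--(2) of Lemma~\ref{formula}, and then substitute into (\ref{rcauchy1}). Your explicit degree count and the factorization of $\hat P'Q-\hat PQ'$ at the multiple roots of $Q$ (condition (iii)) merely spell out steps the paper's proof leaves implicit.
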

\begin{proof}
We use Lemma~\ref{formula} and the part (a) of  
Proposition~\ref{derlemrat}.
By the assertions (1)-(2) of Lemma~\ref{formula}, $f'(z) L_k(z)$  
is a rational function of the form
$\tilde P/Q^2$, where $\tilde P$ is a polynomial
of degree at most $2d-3$,
such that  $f'(z) L_k(z)-1$ has root
at $c_k$ with multiplicity at least $m_k$, and, for every $j\not=k$,
$1\le j\le p'$, the polynomial $\tilde P$ has root at $c_j$
with multiplicity at least $m_j$.
Hence, $f'(z) L_k(z)$ coincides with the rational function $q_k(z)$.
Therefore, indeed, 
$L_k(z)=\frac{\frac{\partial f}{\partial v_k}(z)}{f'(z)}$.
\end{proof}
If $v_k=\infty$ or $z=\infty$, 
Proposition~\ref{derlemrat} is not useful.
In that case, we replace the space
$\Lambda_{d, \bar p'}$ by the space
$$\Lambda^M_{d, \bar p'}=\{M^{-1}\circ g\circ M: g\in 
\Lambda_{d, \bar p'}\},$$ 
where $M$ is a non-linear Mobius transformation,
that is, 
$\beta=M^{-1}(\infty)\not=\infty$, $\alpha=M(\infty)\not=\infty$,
and $\alpha$ is such that
the forward orbits of critical points of $f$ are
different from $\alpha$. Then
the forward critical orbits
of $\tilde f=M^{-1}\circ f\circ M$ lie in the complex plane.
Hence, for any $l>0$, if $g$ is close enough to $f$, then
the first $l$ iterates of the critical points of $\tilde g$ 
lie in the plane, too.

By Definition~\ref{Mspace},
$\tilde g=M^{-1}\circ g\circ M\in \Lambda_{d, \bar p'}^M$ 
is a holomorphic function of 
$\sigma(g)$, $b(g)$, and the critical
values $v_j(\tilde g)=M^{-1}(v_j(g))$ of $\tilde g$, $1\le j\le p'$.
By $\tilde v_j$, $\tilde c_j$ we denote $v_j(\tilde f)$, 
$c_j(\tilde f)$, i.e., the critical values and points of
$\tilde f$. In particular, $\tilde v_j=\beta$ if and only if $p+1\le j\le p'$.
As usual, $\{v_k\}$ denote the critical values of $f$,
and $\partial \tilde f^n/\partial \tilde v_k$ means 
$\partial \tilde g^n/\partial v_k(\tilde g)$ calculated at 
$\tilde g=\tilde f$. 
\begin{prop}\label{derlemratM}
(a) Take $l\ge 1$. If $f^i(v_j)\not=\infty$
for $0\le i\le l-1$, then
\begin{equation}\label{conM}
\frac{\frac{\partial \tilde f^l}{\partial \tilde v_k}(\tilde c_j)}
{(\tilde f^{l-1})'(\tilde v_j)}=\frac{(M^{-1})'(v_j)}{(M^{-1})'(v_k)}
\frac{\frac{\partial f^l}{\partial v_k}(c_j)}{(f^{l-1})'(v_j)},
\end{equation}
\begin{equation}\label{conMsigma}
\frac{\frac{\partial \tilde f^l}{\partial \sigma}(\tilde c_j)}
{(\tilde f^{l-1})'(\tilde v_j)}=
(M^{-1})'(v_j)
\frac{\frac{\partial f^l}{\partial \sigma}(c_j)}
{(f^{l-1})'(v_j)}, 
\end{equation}
\begin{equation}\label{conMb}
\frac{\frac{\partial \tilde f^l}{\partial b}(\tilde c_j)}
{(\tilde f^{l-1})'(\tilde v_j)}=
(M^{-1})'(v_j)
\frac{\frac{\partial f^l}{\partial b}(c_j)}
{(f^{l-1})'(v_j)}
\end{equation}
(b) We have: 
\begin{equation}\label{conM0}
\frac{\partial \tilde f}{\partial \tilde v_k}(\beta)=0, \ \ 1\le k\le p', 
\ \ \
\frac{\partial \tilde f}{\partial \sigma}(\beta)=
\frac{\partial \tilde f}{\partial b}(\beta)=0,
\end{equation}
\begin{equation}\label{conMcr}
\frac{\partial \tilde f}{\partial \tilde v_k}(\tilde c_j)=\delta_{j,k}, \ \ 
\frac{\partial \tilde f}{\partial \sigma}(\tilde c_j)
=\frac{\partial \tilde f}{\partial b}(\tilde c_j)=0,
\ \ 1\le k,j\le p'.
\end{equation}
\end{prop}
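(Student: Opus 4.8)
The plan is to reduce both parts to the conjugacy $\tilde g=M^{-1}\circ g\circ M$ together with the fact that every $g\in\Lambda_{d,\bar p'}$ fixes $\infty$. Two preliminary observations do most of the work. First, since a Mobius map has no critical points, the critical points and critical values of $\tilde g$ are precisely the $M^{-1}$-images of those of $g$, so $\tilde c_k(\tilde g)=M^{-1}(c_k(g))$ and $\tilde v_k(\tilde g)=M^{-1}(v_k(g))$, and consequently $\tilde g^l(\tilde c_j(\tilde g))=M^{-1}\!\left(g^l(c_j(g))\right)$ for $\tilde g$ near $\tilde f$. Second, because $g(\infty)=\infty$ for all $g\in\Lambda_{d,\bar p'}$, the point $\beta=M^{-1}(\infty)$ satisfies $\tilde g(\beta)=M^{-1}(g(\infty))=\beta$ for \emph{every} $\tilde g\in\Lambda^M_{d,\bar p'}$; that is, $\beta$ is a fixed point common to the whole family. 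Finally, since $\alpha=M(\infty)$ was chosen off the forward critical orbits of $f$, the points $\tilde c_j=M^{-1}(c_j)$, $\tilde v_j=M^{-1}(v_j)$, and more generally every $M^{-1}(f^i(c_j))$, are finite, and $M$, $M^{-1}$ are holomorphic with non-vanishing derivative at all points at which we shall evaluate them.

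For part (b): the relation $\tilde g(\beta)\equiv\beta$ says the function $\tilde g\mapsto\tilde g(\beta)$ is constant on $\Lambda^M_{d,\bar p'}$; as $\beta$ does not vary with $\tilde g$, differentiating it with respect to any coordinate and using the paper's notational convention gives $\frac{\partial\tilde f}{\partial\tilde v_k}(\beta)=\frac{\partial\tilde f}{\partial\sigma}(\beta)=\frac{\partial\tilde f}{\partial b}(\beta)=0$, which is~(\ref{conM0}). For~(\ref{conMcr}) I would apply Proposition~\ref{dergenlem} inside the space $N=\Lambda^M_{d,\bar p'}$ to the critical point $\tilde c_j$ of $\tilde f$ (its critical value $\tilde v_j$ being finite) with $x$ taken successively to be $\tilde v_k$, $\sigma$, $b$: since $\partial v_j(\tilde g)/\partial\tilde v_k=\delta_{j,k}$ and $\partial v_j(\tilde g)/\partial\sigma=\partial v_j(\tilde g)/\partial b=0$, the proposition gives that $\frac{\partial\tilde f}{\partial\tilde v_k}(z)-\delta_{j,k}$, resp.\ $\frac{\partial\tilde f}{\partial\sigma}(z)$, resp.\ $\frac{\partial\tilde f}{\partial b}(z)$, has a zero of order at least $m_j$ at $z=\tilde c_j$; evaluating at $z=\tilde c_j$ yields~(\ref{conMcr}).

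For part (a): start from the identity $\tilde g^l(\tilde c_j(\tilde g))=M^{-1}\!\left(g^l(c_j(g))\right)$ noted above. The coordinates match simply: on $\Lambda^M_{d,\bar p'}$ they are $\sigma,b,\tilde v_1,\dots,\tilde v_{p'}$, on $\Lambda_{d,\bar p'}$ they are $\sigma,b,v_1,\dots$ (with $1/v_i$ in place of $v_i$ whenever $v_i=\infty$), related by the \emph{fixed} change of variables $\tilde v_i=M^{-1}(v_i)$ with $\sigma,b$ unchanged. Hence, for $1\le k\le p$ (the case in which $\partial f^l/\partial v_k$ is defined), moving the coordinate $\tilde v_k$ with all others fixed moves only $v_k$, with $\partial v_k/\partial\tilde v_k=M'(M^{-1}(v_k))=1/(M^{-1})'(v_k)$, while moving $\sigma$ or $b$ upstairs moves the identical coordinate downstairs. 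Differentiating the identity by the chain rule, and recalling $\frac{\partial\tilde f^l}{\partial\tilde v_k}(\tilde c_j)=\frac{\partial}{\partial\tilde v_k}\bigl(\tilde g^l(\tilde c_j(\tilde g))\bigr)\big|_{\tilde g=\tilde f}$, one obtains
\[
\frac{\partial\tilde f^l}{\partial\tilde v_k}(\tilde c_j)=(M^{-1})'\!\left(f^l(c_j)\right)\,\frac{1}{(M^{-1})'(v_k)}\,\frac{\partial f^l}{\partial v_k}(c_j),
\]
and likewise $\frac{\partial\tilde f^l}{\partial\sigma}(\tilde c_j)=(M^{-1})'\!\left(f^l(c_j)\right)\frac{\partial f^l}{\partial\sigma}(c_j)$ and the analogue for $b$. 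For the denominator, from $\tilde f^{l-1}=M^{-1}\circ f^{l-1}\circ M$, $M(\tilde v_j)=v_j$ and $f^{l-1}(v_j)=f^l(c_j)$ one computes
\[
(\tilde f^{l-1})'(\tilde v_j)=(M^{-1})'\!\left(f^l(c_j)\right)\,(f^{l-1})'(v_j)\,\frac{1}{(M^{-1})'(v_j)} .
\]
Here the hypothesis $f^i(v_j)\ne\infty$ for $0\le i\le l-1$ is exactly what makes $(f^{l-1})'(v_j)$ and $f^l(c_j)$ finite, and the choice of $\alpha$ keeps $f^l(c_j)$ and $v_k$ away from $\alpha$, so that $M^{-1}$ is regular there. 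Dividing the two displays cancels the common factor $(M^{-1})'(f^l(c_j))$ and leaves precisely~(\ref{conM})--(\ref{conMb}).

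I do not expect a genuine obstacle: the argument is a careful accounting of chain-rule factors under the conjugating Mobius map $M$. The points that need a little care are the interplay of the two coordinate systems — that one coordinate $\tilde v_k$ upstairs moves exactly one coordinate $v_k$ (not $1/v_k$, not a mixture) downstairs, and that evaluation at the non-moving point $\beta$ commutes with differentiation in a coordinate — together with tracking which points must avoid $\alpha$ and $\infty$ so that all occurring derivatives of $M^{\pm 1}$ and of $f^{l-1}$ are finite and non-zero.
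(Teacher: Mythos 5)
Your proposal is correct and follows essentially the same route as the paper: part (a) is the same chain-rule bookkeeping through the fixed conjugacy $\tilde g=M^{-1}\circ g\circ M$ with the coordinate change $v_k=M(\tilde v_k)$ (and $\sigma,b$ unchanged), cancelling the common factor $(M^{-1})'(f^{l-1}(v_j))$, while part (b) uses that $\beta$ is a fixed point of every $\tilde g\in\Lambda^M_{d,\bar p'}$ (the paper phrases this via the integral representation $\tilde g(z)=\beta+\int_\beta^z\tilde g'(w)\,dw$, you differentiate the constant $\tilde g(\beta)=\beta$ directly, which is the same observation) and then applies Proposition~\ref{dergenlem} in the space $\Lambda^M_{d,\bar p'}$ for~(\ref{conMcr}). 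Your explicit restriction to $1\le k\le p$ in~(\ref{conM}) is consistent with the paper's implicit "conditions on $v_k$" and with how the formula is actually used later.
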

\begin{proof}
(a) Since $\tilde g^l=M^{-1}\circ g^l\circ M$, 
$v_k(g)=M(v_k(\tilde g))$,
where $M$ is a fixed map, and
by the conditions on $v_k, c_j$, the following calculations make sense:
\begin{eqnarray*}
\frac{\frac{\partial \tilde f^l}{\partial \tilde v_k}(\tilde c_j)}
{(\tilde f^{l-1})'(\tilde v_j)} &=& \\
\frac{(M^{-1})'(f^l(M(\tilde c_j))\frac{\partial f^l}{\partial v_k}
(M(\tilde c_j))\frac{\partial v_k(g)}{\partial \tilde v_k}}
{(M^{-1})'(f^{l-1}(M(\tilde v_j))(f^{l-1})'(M(\tilde v_j))M'(\tilde v_j)} &=& \\
\frac{(M^{-1})'(f^{l-1}(v_j))\frac{\partial f^l}{\partial v_k}
(c_j)M'(\tilde v_k)}
{(M^{-1})'(f^{l-1}(v_j)(f^{l-1})'(v_j)M'(\tilde v_j)} &=& \\
\frac{(M^{-1})'(v_j)}{(M^{-1})'(v_k)}
\frac{\frac{\partial f^l}{\partial v_k}(c_j)}
{(f^{l-1})'(v_j)}.
\end{eqnarray*}
The proof of~(\ref{conMsigma})-(\ref{conMb}) is similar.

(b) Since $\tilde g(\beta)=\beta\not=\infty$ 
for every $\tilde g\in \Lambda^M_{d, \bar p'}$,  
we write:
\begin{equation}\label{1M}
\tilde g(z)=\beta+\int_{\beta}^z \tilde g'(w) dw.
\end{equation}
Since $\beta$ is independent of $g$, 
\begin{equation}\label{2M}
\frac{\partial \tilde f}{\partial \tilde v_k}(z)=
\int_{\beta}^z \frac{\partial \tilde f'}{\partial \tilde v_k}(w) dw,
\end{equation}
and similar for the derivatives w.r.t. $\sigma$ and $b$.
To show~(\ref{conM0}), it remains to estimate~(\ref{2M}) at $z=\beta$.
To show~(\ref{conMcr}),
it is enough to apply 
Proposition~\ref{dergenlem} to the space $\Lambda_{d, \bar p}^M$,
the map $\tilde f$, the critical point $\tilde c_j$,
and the coordinate in the vector $\bar v^M$, which is either $\tilde v_k$
or $\sigma$ or $b$.
\end{proof}

{\bf Proof of Part (a) of Theorem~\ref{onerat}.}

Here we state and prove a refined version of Theorem~\ref{onerat}(a)
introducing notations $L(c_j, v_k)$, $L(c_j, \sigma)$,
$L(c_j, b)$ to be used later on. Recall that the notations
$L^M(c_j, v_k)$, $L^M(c_j, \sigma)$, $L^M(c_j, b)$ are introduced
in Theorem~\ref{onerat}. 
\begin{theo}\label{onerata}
Let $f\in \Lambda_{d, \bar p'}$.

(1) Assume $v_k\not=\infty$, $c_j$ is summable,
and the whole orbit of $c_j$ remains in the complex plane.
Then the limits defined below exist and are expressed as the following
series, which converge absolutely:
\begin{equation}\label{limrata}
L(c_j, v_k):=\lim_{l\to \infty}
\frac{\frac{\partial (f^l (c))}{\partial v_k}}{(f^{l-1})'(v)}=
\delta_{j,k}+\sum_{n=1}^{\infty}
\frac{\frac{\partial f}{\partial v_k}(f^{n-1}(v_j))}{(f^{n})'(v_j)},
\end{equation}
\begin{equation}\label{limsigmaa}
L(c_j, \sigma):=\lim_{l\to \infty}
\frac{\frac{\partial (f^l (c_j))}{\partial \sigma}}{(f^{l-1})'(v_j)}=
\frac{1}{\sigma}\sum_{n=0}^\infty \frac{f^n(v_j)}{(f^n)'(v_j)},
\end{equation}
\begin{equation}\label{limma}
L(c_j, b):=\lim_{l\to \infty}
\frac{\frac{\partial (f^l (c_j))}{\partial b}}{(f^{l-1})'(v_j)}=
\frac{1}{\sigma}
\sum_{n=0}^\infty \frac{1}{(f^n)'(v)}.
\end{equation}
Furthermore,
\begin{equation}\label{lmlv}
L^M(c_j, v_k)=\frac{(M^{-1})'(v_j)}{(M^{-1})'(v_k)}L(c_j, v_k),
\end{equation}
\begin{equation}\label{lmlsigma}
L^M(c_j, \sigma)=(M^{-1})'(v_j)L(c_j, \sigma), \ \ \ 
L^M(c_j, b)=(M^{-1})'(v_j)L(c_j, b).
\end{equation}

(2) Assume $v_k\not=\infty$, $c_j$ is summable 
and $f^l(v_j)=\infty$, for some minimal $l\ge 1$.
Define in this case:
\begin{equation}\label{limratinf}
L(c_j, v_k):=
\delta_{j,k}+\sum_{n=1}^{l}
\frac{\frac{\partial f}{\partial v_k}(f^{n-1}(v_j))}{(f^{n})'(v_j)},
\end{equation}
\begin{equation}\label{limsigmainf}
L(c_j, \sigma):=
\frac{1}{\sigma}\sum_{n=0}^{l-1} \frac{f^n(v_j)}{(f^n)'(v_j)}, \ \
L(c_j, b):=
\frac{1}{\sigma}
\sum_{n=0}^{l-1} \frac{1}{(f^n)'(v)}.
\end{equation}
Then~(\ref{lmlv})-(\ref{lmlsigma}) 
hold.

(3) Assume $v_j=f(c_j)=\infty$. Then, for 
$1\le k\le p'$,
\begin{equation}\label{limratMinfa}
L^M(c_j, v_k)=\delta_{j,k}, 
\end{equation}
\begin{equation}\label{limratMinf1a}
L^M(c_j, \sigma)=L^M(c_j, b)=0.
\end{equation}
\end{theo}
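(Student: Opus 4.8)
\textbf{Proof plan for Theorem~\ref{onerata}.}
The plan is to reduce everything to two tools already at hand: the telescoping identity~(\ref{5})--(\ref{6}) (which holds verbatim on $\Lambda_{d,\bar p'}$ since it only uses the chain rule), and the explicit derivative formulas in Proposition~\ref{derlemrat} and Proposition~\ref{derlemratM}. For part~(1), I would first write down, exactly as in~(\ref{5}) and then specializing $z\to c_j$ as in~(\ref{6}), the formula
\begin{equation}
\frac{\partial f^l}{\partial x}(c_j)=(f^{l-1})'(v_j)\left\{\frac{\partial f}{\partial x}(c_j)+\sum_{n=1}^{l-1}\frac{\frac{\partial f}{\partial x}(f^{n-1}(v_j))}{(f^n)'(v_j)}\right\},
\end{equation}
valid for $x$ any of $v_k$, $\sigma$, $b$. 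By Proposition~\ref{derlemrat}(b), $\frac{\partial f}{\partial \sigma}(z)=\frac{z}{\sigma}f'(z)$ and $\frac{\partial f}{\partial b}(z)=\frac1\sigma f'(z)$, so the summand for $\sigma$ becomes $\frac{f^{n-1}(v_j)f'(f^{n-1}(v_j))}{\sigma(f^n)'(v_j)}=\frac{f^{n-1}(v_j)}{\sigma(f^{n-1})'(v_j)}$ and similarly for $b$, which after reindexing gives the closed forms in~(\ref{limsigmaa})--(\ref{limma}). For $x=v_k$ the summand is literally $\frac{\frac{\partial f}{\partial v_k}(f^{n-1}(v_j))}{(f^n)'(v_j)}$ and $\frac{\partial f}{\partial v_k}(c_j)=\delta_{j,k}$ by Proposition~\ref{derlemrat}(a), giving~(\ref{limrata}).

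The convergence of these three series is the one genuinely analytic point, and is the place I'd expect to spend the most care. For the $v_k$ series: $\frac{\partial f}{\partial v_k}=\tilde P/Q^2$ is a fixed rational function whose only poles are at the roots of $Q$, i.e. at the critical points $c_{p+1},\dots,c_{p'}$ of $f$; since the forward orbit of $c_j$ stays in ${\bf C}$ and (as $c_j$ is summable, hence in $J$, and $f$ fixes $\infty$) stays bounded away from $\infty$, the orbit $\{f^{n-1}(v_j)\}$ stays in a compact subset of ${\bf C}$ on which $\frac{\partial f}{\partial v_k}$ is bounded —- one should note the orbit avoids the poles because a pole of $\frac{\partial f}{\partial v_k}$ is a critical point of $f$ in $Q^{-1}(0)$, and landing on such a point would make a later $v_j$-iterate equal $\infty$, contrary to hypothesis. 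Then absolute convergence of $\sum|\frac{\partial f}{\partial v_k}(f^{n-1}(v_j))|/|(f^n)'(v_j)|$ follows from the summability condition~(\ref{summ}) exactly as in the polynomial case (estimate~(\ref{bdd}) and the lines after it). For the $\sigma$ and $b$ series, boundedness of $\{f^n(v_j)\}$ reduces them directly to $\sum 1/|(f^n)'(v_j)|<\infty$, again a consequence of~(\ref{summ}). Once convergence is in hand, passing $l\to\infty$ in the displayed telescoping identity after dividing by $(f^{l-1})'(v_j)$ gives the stated limits. Finally~(\ref{lmlv})--(\ref{lmlsigma}) are immediate from~(\ref{conM})--(\ref{conMb}) of Proposition~\ref{derlemratM}(a): those hold for each finite $l$ under the hypothesis $f^i(v_j)\ne\infty$, $0\le i\le l-1$, which is exactly the standing assumption in part~(1), so one just lets $l\to\infty$ on both sides.

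For part~(2) the same telescoping identity applies but the sums are genuinely finite: once $f^l(v_j)=\infty$ the further iterates of the critical point of $f$ leave ${\bf C}$, so the definitions~(\ref{limratinf})--(\ref{limsigmainf}) are the natural truncations, and~(\ref{conM})--(\ref{conMb}) still hold for $l$ in the relevant range since $f^i(v_j)\ne\infty$ for $0\le i\le l-1$ by minimality of $l$; hence~(\ref{lmlv})--(\ref{lmlsigma}) again. For part~(3), when $v_j=\infty$ the critical value $\tilde v_j=M^{-1}(v_j)=\beta$ of $\tilde f$, and I would invoke Proposition~\ref{derlemratM}(b): $\frac{\partial \tilde f}{\partial \tilde v_k}(\beta)=0=\frac{\partial\tilde f}{\partial\sigma}(\beta)=\frac{\partial\tilde f}{\partial b}(\beta)$ while $\frac{\partial\tilde f}{\partial\tilde v_k}(\tilde c_j)=\delta_{j,k}$. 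Writing the telescoping identity~(\ref{6}) in the $\tilde f$ coordinates with $c_j$ replaced by $\tilde c_j$ and $v_j$ by $\tilde v_j=\beta$, every term $\frac{\partial\tilde f}{\partial x}(\tilde f^{n-1}(\beta))$ with $n\ge 1$ vanishes because $\beta$ is a fixed point of $\tilde f$ (so $\tilde f^{n-1}(\beta)=\beta$) and the derivative of $\tilde f$ at $\beta$ in all these coordinate directions is zero; what survives is only the $n=0$ term $\frac{\partial\tilde f}{\partial x}(\tilde c_j)$, which is $\delta_{j,k}$ for $x=\tilde v_k$ and $0$ for $x=\sigma,b$. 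Dividing by $(\tilde f^{l-1})'(\tilde v_j)$ and letting $l\to\infty$ (the expression is in fact already constant in $l$ up to the normalizing factor) yields~(\ref{limratMinfa})--(\ref{limratMinf1a}). The only subtlety to state carefully here is that $\beta$ is genuinely fixed and non-critical for $\tilde f$ and that $\tilde c_j\ne\beta$, both of which follow from the construction of $M$ (it sends $\infty$ off the critical orbits) together with $f(\infty)=\infty$.
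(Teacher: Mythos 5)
Your overall route—the telescoping identity (\ref{6})/(\ref{nuzhnoa}) combined with the derivative formulas of Propositions~\ref{derlemrat} and~\ref{derlemratM}—is exactly the paper's, and your algebra for the $\sigma$- and $b$-series and your treatment of part (3) match the actual proof. However, there is a genuine gap at the one step you yourself flag as the analytic heart of part (1): the convergence of the series. You claim the orbit $\{f^n(v_j)\}$ "stays bounded away from $\infty$" because $c_j$ is summable (hence in $J$) and $f$ fixes $\infty$. That inference is valid for polynomials, where $\infty$ is attracting and $J$ is a compact subset of ${\bf C}$, but not here: $\infty$ is a \emph{non-attracting} fixed point (e.g.\ $|\sigma|>1$ is allowed), so $\infty$ may lie in the Julia set, $J$ may be unbounded in the plane, and an orbit that never equals $\infty$ can still be unbounded and can accumulate on the poles of $f$ (the roots of $Q$, which are precisely the poles of $\partial f/\partial v_k=\tilde P/Q^2$). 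Knowing the orbit never \emph{lands} on a pole does not put it in a compact set avoiding the poles, so your "bounded on a compact set containing the orbit" argument collapses; the paper even remarks explicitly that the critical orbits "lie in the plane (although can be unbounded)". The paper's proof needs no boundedness: since $\deg\tilde P\le 2d-3$, one has the global estimate (\ref{bddrat}), $|\frac{\partial f}{\partial v_k}(z)|\le C_k(1+|f(z)|^2)$ for all $z\in{\bf C}$, and absolute convergence then comes straight from the spherical form of the summability condition (\ref{summ}), whose factor $1+|f^n(v)|^2$ exists exactly to absorb excursions toward poles and toward $\infty$. The same device is what makes the $\sigma$- and $b$-series converge, via $|f^n(v_j)|\le\frac12(1+|f^n(v_j)|^2)$; your reduction of those to $\sum 1/|(f^n)'(v_j)|<\infty$ again leans on the unproved boundedness.

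A smaller omission: in part (2), $L^M(c_j,v_k)$ is by definition a limit over \emph{all} $l$, including $l$ beyond the time the orbit reaches $\infty$, so to get (\ref{lmlv}) you must check that the ratio in the $\tilde f$-coordinates is constant for such $l$; this follows because $\tilde f^n(\tilde v_j)=\beta$ for $n\ge l$ and the coordinate-derivatives of $\tilde f$ vanish at $\beta$ (Proposition~\ref{derlemratM}(b))—the very mechanism you use in part (3)—but "the sums are genuinely finite" as written skips that step. With the convergence argument replaced by (\ref{bddrat}) plus (\ref{summ}) and this stabilization spelled out, your plan coincides with the paper's proof.
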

\begin{proof}
(1) It is enough to prove 
(\ref{limrata})-(\ref{limma}). Then,
by~(\ref{conM})-(\ref{conMb}) of Proposition~\ref{derlemratM},
~(\ref{lmlv})-(\ref{lmlsigma}) follow.
We can use the identity (\ref{6}) of Subsection~\ref{pra}: 
for every $l>0$,
\begin{equation}\label{nuzhnoa}
\frac{\frac{\partial f^l}{\partial v_k}(c_j)}
{(f^{l-1})'(v_j)}=
\frac{\partial f}{\partial v_k}(c_j)+
\sum_{n=1}^{l-1}\frac{\frac{\partial f}{\partial v_k}(f^{n-1}(v_j))}
{(f^{n})'(v_j)}.
\end{equation}
By the part (a) of Proposition~\ref{derlemrat}, 
$\frac{\partial f}{\partial v_k}(c_j)=\delta_{j,k}$, and, moreover,
$\frac{\partial f}{\partial v_k}(z)$
is a rational function of the form $\tilde P/Q^2$, where
degree of $\tilde P$ 
is less than the degree of $Q^2$. 
Hence, for some constant $C_k$ and all $z$,
\begin{equation}\label{bddrat}
|\frac{\partial f}{\partial v_k}(z)|\le C_k (1+|f(z)|^{2}).
\end{equation}
Now, assume that $c_j$ is weakly expanding.
Then
\begin{equation}\label{aaarat}
\sum_{n=1}^{\infty}
\frac{|\frac{\partial f}{\partial v_k}(f^{n-1}(v_j))|}{|(f^{n})'(v_j)|}
\le C_k
\frac{1+|f^{n}(v_j)|^2}{|(f^{n})'(v_j)|}<\infty,
\end{equation}
and~(\ref{limrata}) follows.
The proof of existence of $L(c_j, \sigma)$ and $L(c_j, b)$
is similar to the proof for $v_k$-derivative. Indeed,
$$
\frac{\partial f^l}{\partial \sigma}(z)=
(f^l)'(z)\sum_{n=0}^{l-1}\frac{\frac{\partial f}{\partial \sigma}(f^n(z))}
{(f^{n+1})'(z)}.$$
Letting here $z\to c_j$ and using the part (b) 
of Proposition~\ref{derlemrat}, 
one gets:
\begin{eqnarray*}
\frac{\partial f^l}{\partial \sigma}(c_j)=
(f^{l-1})'(v_j)\left\{\frac{\partial f}{\partial \sigma}(c_j)+
\sum_{n=1}^{l-1}\frac{\frac{\partial f}{\partial \sigma}(f^{n-1}(v_j))}
{(f^{n})'(v_j)}\right\} &=& \\
(f^{l-1})'(v_j)\frac{1}{\sigma}\left\{c_j f'(c_j)+
\sum_{n=1}^{l-1}\frac{f^{n-1}(v_j) f'(f^{n-1}(v_j))}
{(f^{n})'(v_j)}\right\} &=& \\
\frac{(f^{l-1})'(v_j)}{\sigma}\sum_{n=1}^{l-1}
\frac{f^{n-1}(v_j)}
{(f^{n-1})'(v_j)},
\end{eqnarray*}
and we get~(\ref{limsigmaa}).
Doing the same (with obvious changes) for the
$\partial/\partial b$-derivative, we get~(\ref{limma}).

(2) We use (\ref{nuzhnoa}) with $\tilde f$ instead of $f$.
Note that $f^j(v_j)=\infty$ if and only if
$\tilde f^j(\tilde v_j)=\beta$. Then $\tilde f^j(\tilde v_j)=\beta$
for every $j\ge l$, and, hence, for $j\ge l$, 
by Proposition~\ref{derlemratM} (b),
\begin{eqnarray*}
\frac{\frac{\partial f^j}{\partial \tilde v_k}(\tilde c_j)}
{(\tilde f^{j-1})'(\tilde v_j)}=
\frac{\partial \tilde f}{\partial \tilde v_k}(\tilde c_j)+
\sum_{n=1}^{j-1}\frac{\frac{\partial \tilde f}{\partial \tilde v_k}
(\tilde f^{n-1}(\tilde v_j))}
{(\tilde f^{n})'(\tilde v_j)} &=& \\
\delta_{j, k}+
\sum_{n=1}^{l-1}\frac{\frac{\partial \tilde f}{\partial \tilde v_k}
(\tilde f^{n-1}(\tilde v_j))}
{(\tilde f^{n})'(\tilde v_j)}.
\end{eqnarray*}
That is,
\begin{equation}\label{nuzhnoaa}
\frac{\frac{\partial f^j}{\partial \tilde v_k}(\tilde c_j)}
{(\tilde f^{j-1})'(\tilde v_j)}
=\frac{\frac{\partial f^l}{\partial \tilde v_k}(\tilde c_j)}
{(\tilde f^{l-1})'(\tilde v_j)},
\end{equation}
while, by Proposition~\ref{derlemratM} (a),
$$\frac{\frac{\partial f^l}{\partial \tilde v_k}(\tilde c_j)}
{(\tilde f^{l-1})'(\tilde v_j)}
=\frac{(M^{-1})'(v_j)}{(M^{-1})'(v_k)}
\frac{\frac{\partial f^l}{\partial v_k}(c_j)}{(f^{l-1})'(v_j)}=
\frac{(M^{-1})'(v_j)}{(M^{-1})'(v_k)}L(c_j, v_k).$$
This proves~(\ref{lmlv}) in the considered case.
The proof of~(\ref{lmlsigma}) is similar.

(3). The relation~(\ref{limratMinfa}) follows directly from~(\ref{nuzhnoaa}) 
(with $l=1$) and Proposition~\ref{derlemratM}. 
The proof of~(\ref{limratMinf1a}) is similar.
\end{proof}

Next statement is a corollary from the previous Theorem~\ref{onerata}.
It allows us to reduce evaluation of the rank of the
matrix ${\bf L^M}$, which is defined in Theorem~\ref{onerat} (b),
to the evaluation of the rank of a simpler matrix ${\bf L}$ defined below. 
Recall that the integer number $\nu$ between $0$ and $r$
is defined in Theorem~\ref{onerat} (b) as the maximal number
of points from the collection
$c_{j_1},...c_{j_r}$, $1\le j_1<...<j_r\le p'$, 
of $r$ summable critical points of $f$, such that
the corresponding critical values are different from infinity. Moreover,
if $\nu<r$, the last $r-\nu$ indexes $j_{\nu+1},...,j_r$
are $p'-(r-\nu-1),...,p'$. 
\begin{coro}\label{lmtol}
The rank of the matrix ${\bf L^M}$ is bigger than or equal to 
\begin{equation}\label{rank}
r':=(r-\nu)+ r_0, 
\end{equation}
where $r_0\le \nu$ is  
the rank of the matrix ${\bf L}$ obtained from ${\bf L^M}$
by the following three operations:

(1) firstly, if $\nu<r$, then we cross out the last $r-\nu$ rows 
and the last $r-\nu$ columns of 
${\bf L^M}$ (if $\nu=r$, we do nothing); let ${\bf L^M_1}$ be the resulting
matrix,

(2) secondly, we cross out the last $(p'-p)-(r-\nu)$
columns of ${\bf L^M_1}$ (if $p'=p$, we again do nothing);
let the resulting matrix be ${\bf L^M_2}$,

(3) at last, we replace in ${\bf L^M_2}$ the elements
$L^M(c_{j_i}, v_k)$, $L^M(c_{j_i}, \sigma)$ 
by $L(c_{j_i}, v_k)$,  $L(c_{j_i}, \sigma)$ respectively;
the resulting matrix is called ${\bf L}$.

In other words, in the case
($H_\infty$),
\begin{equation}\label{rmdeltacut}
{\bf L}=(L(c_{j_i}, \sigma), L(c_{j_i}, v_1),..., L(c_{j_i}, v_{p-1}))_{1\le i\le \nu},
\end{equation}
In the case
($NN_\infty$),
\begin{equation}\label{rmdeltacutn}
{\bf L}=(L(c_{j_i}, v_1),..., L(c_{j_i}, v_{p-1}))_{1\le i\le \nu},
\end{equation}
and in the case
($ND_\infty$),
\begin{equation}\label{rmdeltacutnn}
{\bf L}=(L(c_{j_i}, v_1),...,L(c_{j_i}, v_{p-2})_{1\le i\le \nu}.
\end{equation}
\end{coro}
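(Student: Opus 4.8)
The plan is to deduce Corollary~\ref{lmtol} directly from Theorem~\ref{onerata}, which expresses each entry of ${\bf L^M}$ in terms of the corresponding entry of ${\bf L}$ (or declares it to be a Kronecker delta). The point is purely linear-algebraic: the three operations (1)--(3) that pass from ${\bf L^M}$ to ${\bf L}$ amount to peeling off blocks that contribute exactly $r-\nu$ to the rank, plus rescaling the remaining rows and columns by nonzero factors, which does not change the rank. So I would first set up the block decomposition of ${\bf L^M}$ induced by the ordering of the summable critical points fixed in Theorem~\ref{onerat}(b): the rows are indexed by $c_{j_1},\dots,c_{j_r}$, with the first $\nu$ having finite critical value and the last $r-\nu$ having critical value $\infty$; the columns are indexed by $\sigma$ (in case $(H_\infty)$ only) and by the critical values $v_k$ that are retained.

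Next I would invoke part (3) of Theorem~\ref{onerata}: for each of the last $r-\nu$ rows, i.e.\ those with $v_{j_i}=\infty$, we have $L^M(c_{j_i},v_k)=\delta_{j_i,k}$ and $L^M(c_{j_i},\sigma)=L^M(c_{j_i},b)=0$. By the ordering convention, the indices $j_{\nu+1},\dots,j_r$ are exactly $p'-(r-\nu-1),\dots,p'$, and correspondingly the last $r-\nu$ retained columns are $v_{p'-(r-\nu-1)},\dots,v_{p'}$ (since $b$ and the normalized critical value(s) have already been dropped from the coordinate list in forming ${\bf L^M}$, and $v_{p+1},\dots,v_{p'}$ are the remaining ones). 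Hence in the last $r-\nu$ rows the matrix ${\bf L^M}$ has a $(r-\nu)\times(r-\nu)$ identity block sitting in the last $r-\nu$ columns, and zeros in those columns for all the first $\nu$ rows (again by part (3), applied now to see that for a \emph{finite}-critical-value row $c_{j_i}$ with $i\le\nu$, the column $v_k$ with $k>p$ gives $L^M(c_{j_i},v_k)=\delta_{j_i,k}=0$ because $j_i\le p<k$; here one also uses~(\ref{limratinf}) together with~(\ref{lmlv}) noting $\frac{\partial f}{\partial v_k}\equiv 0$ identically for $k>p$ on the finite part, or more simply that $v_k=\infty$ is not a coordinate affecting a finite orbit). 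This is the one computational point that needs a line of care.

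Given that block structure, the rank of ${\bf L^M}$ is $r-\nu$ plus the rank of the complementary $\nu\times(\text{remaining columns})$ block ${\bf L^M_1}$ obtained by crossing out the last $r-\nu$ rows and columns; this is operation (1), and it is an elementary fact about matrices with an identity block and a zero block above it (do a Laplace/Schur-complement expansion, or just row/column reduce). For operation (2): the columns of ${\bf L^M_1}$ indexed by $v_k$ with $k>p$ that are \emph{not} among the last $r-\nu$ (there are $(p'-p)-(r-\nu)$ of them) are, by part (3) used as above, identically zero on all $\nu$ remaining rows, since every remaining row has $j_i\le p$ and these columns test a critical value at infinity against a finite critical point — so deleting them changes nothing. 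Finally operation (3) rescales: by~(\ref{lmlv})--(\ref{lmlsigma}), the $i$-th remaining row of ${\bf L^M_2}$ equals $(M^{-1})'(v_{j_i})$ times a vector whose $v_k$-entry is $\frac{1}{(M^{-1})'(v_k)}L(c_{j_i},v_k)$ and whose $\sigma$-entry is $L(c_{j_i},\sigma)$; so ${\bf L^M_2}=D_{\mathrm{row}}\,{\bf L}\,D_{\mathrm{col}}$ for diagonal matrices $D_{\mathrm{row}}$, $D_{\mathrm{col}}$ with nonzero diagonal entries (note $(M^{-1})'(v)\ne 0,\infty$ at every point of the finite critical orbit, by the choice of $M$). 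Multiplication by invertible diagonal matrices preserves rank, so $\operatorname{rk}{\bf L^M_2}=\operatorname{rk}{\bf L}=r_0$, and therefore $\operatorname{rk}{\bf L^M}=(r-\nu)+r_0=r'$. The main obstacle — really the only nontrivial bookkeeping — is matching up the index conventions of Theorem~\ref{onerat}(b) with the columns retained in~(\ref{rmdeltacutM})--(\ref{rmdeltacutnnM}) carefully enough to see that the deleted rows/columns genuinely form the identity-plus-zero block; once that is in place the rest is one Schur-complement line and one diagonal-scaling line. The three explicit descriptions~(\ref{rmdeltacut})--(\ref{rmdeltacutnn}) are then just the instantiation of operation (3) in each of the cases $(H_\infty)$, $(NN_\infty)$, $(ND_\infty)$, read off directly from~(\ref{rmdeltacutM})--(\ref{rmdeltacutnnM}).
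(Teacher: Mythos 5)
Your overall route is the paper's: use Theorem~\ref{onerata} to locate an $(r-\nu)\times(r-\nu)$ identity block in the last $r-\nu$ rows and columns of ${\bf L^M}$, split off $r-\nu$ from the rank, pass to ${\bf L^M_1}$ and ${\bf L^M_2}$, and finally use the rescaling relations~(\ref{lmlv})--(\ref{lmlsigma}) to see that ${\bf L^M_2}=D_{\rm row}\,{\bf L}\,D_{\rm col}$ with invertible diagonal factors (indeed $(M^{-1})'(v_{j_i})$ and $(M^{-1})'(v_k)$ are finite and nonzero because $\alpha=M(\infty)$ avoids the critical orbits), so that ${\bf L^M_2}$ and ${\bf L}$ have the same rank. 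All of this matches the paper's proof.

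The step that does not stand is your claim that the columns $v_k$ with $k>p$ vanish on the first $\nu$ rows, i.e.\ that $L^M(c_{j_i},v_k)=0$ whenever $v_{j_i}\ne\infty$ and $v_k=\infty$. Part (3) of Theorem~\ref{onerata} applies only to rows whose critical value is infinite (it assumes $v_j=f(c_j)=\infty$); it says nothing about a finite-critical-value row tested against an infinite-critical-value column. Likewise~(\ref{limratinf}) and~(\ref{lmlv}) are stated only for $v_k\ne\infty$, and $\partial\tilde f/\partial\tilde v_k$ is \emph{not} identically zero for $k>p$: by~(\ref{conM0})--(\ref{conMcr}) it vanishes at $\beta$ and at the critical points $\tilde c_i$, $i\ne k$, but not along the orbit of $\tilde v_{j_i}$, so the series defining $L^M(c_{j_i},v_k)$ has no reason to vanish. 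What part (3) does give is that the \emph{entire} last $r-\nu$ rows vanish off the identity block, and that alone yields the rank split $\mathrm{rank}\,{\bf L^M}\ge (r-\nu)+\mathrm{rank}\,{\bf L^M_1}$ (your Schur-complement argument should be run with this lower zero block, not with the alleged zero block above the identity). The unjustified vanishing is then used only to get rank \emph{equality} under operation (2), and hence the equality $\mathrm{rank}\,{\bf L^M}=r'$, which is more than the corollary asserts. Replace it by the trivial remark that deleting columns cannot increase the rank, so $\mathrm{rank}\,{\bf L^M_1}\ge\mathrm{rank}\,{\bf L^M_2}$; with that substitution your argument becomes exactly the paper's proof of the stated inequality $\mathrm{rank}\,{\bf L^M}\ge(r-\nu)+r_0$.
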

\begin{proof}
Let us consider last $r-\nu$ rows of ${\bf L^M}$,
which correspond to indexes $j_i=p'-r+i$
of the summable critical points, for $i=\nu+1,...,r$.
By Theorem~\ref{onerata} (3), 
on the $i$-row of ${\bf L^M}$, for
$i=\nu+1,...,r$, 
all elements are $0$,
except for the ''diagonal'' one, which is equal to $1$:
$L^M(c_{p'-r+i}, v_{p'-r+i})=1$.
In other words, the lower right hand corner of the matrix
${\bf L^M}$ contains the $r-\nu\times r-\nu$ identity matrix.
Therefore, the rank of ${\bf L^M}$ is bigger than or equal to $r-\nu$ plus
the rank of ${\bf L^M_1}$.
If we cross out some of the columns of ${\bf L^M_1}$, then the rank
can only decrease. Hence, the rank of ${\bf L^M_1}$ is bigger than or equal
to the rank of ${\bf L^M_2}$. In turn, its elements are
the corresponding elements of the matrix ${\bf L}$ defined above,
but with the upper index $M$.
Now, we employ the relations~(\ref{lmlv})-(\ref{lmlsigma})
of Theorem~\ref{onerata} and conclude that
the ranks of ${\bf L^M_2}$ and ${\bf L}$ are actually equal.
\end{proof}
\section{Part (b) of Theorem~\ref{onerat}}
\subsection{The Wolff-Denjoy series}\label{wd}
Let $\{\alpha_k\}_{k=0}^\infty$, 
$\{b_k\}_{k=0}^\infty$ (where $b_i\not=b_j$, for $k\not=j$)
be two sequences of complex numbers.
Assume that 
\begin{equation}\label{ab1}
\sum_{k\ge 0} |\alpha_k|(1+|b_k|^2)<\infty,
\end{equation}
It implies that
two series
\begin{equation}\label{ab2}
A=\sum_{k\ge 0} \alpha_k, \ \ \
B=\sum_{k\ge 0} \alpha_k b_k,
\end{equation}
converge absolutely.
Define
\begin{equation}\label{H}
H(x)=\sum_{k=0}^\infty \frac{\alpha_k}{b_k-x}.
\end{equation}
Then $H$ is integrable in every disk $B_r=\{|x|<r\}$.
Indeed,
if $\sigma_x$ denotes the element of the Lebesgue measure on the plane
of the variable $x$, then
$$\int_{B_r} |H(x)| d\sigma_x\le
\sum_{k\ge 0}|\alpha_k|\int_{B_r}\frac{1}{|x-b_k|} d\sigma_x\le 
2\pi \sum_{k\ge 0}|\alpha_k|(r+|b_k|)<\infty.$$
We define a kind of regularization of $H$ at infinity
as
$$\hat H(x)=H(x)+\frac{A}{x}+
\frac{B}{x^2}=
\sum_{n\ge 0}(\frac{\alpha_n}{b_n-x}+\frac{\alpha_n}{x}+
\frac{\alpha_n b_n}{x^2}).$$
The name is justified by the following claim
(which must be known though):
\begin{lem}\label{int}
$\hat H$ is integrable at infinity.
\end{lem}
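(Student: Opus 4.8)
The plan is to first collapse the three summands defining $\hat H$ into a single rational expression. For each fixed $n$ an elementary computation gives
$$\frac{1}{b_n-x}+\frac{1}{x}=\frac{b_n}{x(b_n-x)},\qquad
\frac{b_n}{x(b_n-x)}+\frac{b_n}{x^2}=\frac{b_n^2}{x^2(b_n-x)},$$
so that, since the rearrangements $A/x=\sum_n\alpha_n/x$ and $B/x^2=\sum_n\alpha_nb_n/x^2$ are legitimate by~(\ref{ab2}), we have
$$\hat H(x)=\sum_{n\ge 0}\frac{\alpha_n b_n^2}{x^2(b_n-x)}$$
for every $x\notin\{0,b_0,b_1,\dots\}$, the series converging absolutely.

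Next I would fix $R>0$ and estimate $\int_{|x|\ge R}|\hat H(x)|\,d\sigma_x$ by Tonelli's theorem, bounding it by $\sum_{n\ge 0}|\alpha_n||b_n|^2\,J_n$ with $J_n=\int_{|x|\ge R}\frac{d\sigma_x}{|x|^2|x-b_n|}$. The heart of the argument is the claim that $J_n\le C/R$ for an absolute constant $C$, uniformly in $n$. To prove it I would split $\{|x|\ge R\}$ into $A_n=\{|x|\ge R,\ |x-b_n|\ge|x|/2\}$ and $B_n=\{|x|\ge R,\ |x-b_n|<|x|/2\}$. On $A_n$ the integrand is $\le 2|x|^{-3}$, whose integral over $|x|\ge R$ equals $4\pi/R$. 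On $B_n$ one has $|x|/2<|b_n|$ and $|b_n|<3|x|/2$, so $|x|$ is comparable to $|b_n|$; in particular $B_n=\varnothing$ unless $|b_n|\ge R/2$, and when $|b_n|\ge R/2$ one uses $|x|^{-2}\le 9/(4|b_n|^2)$ together with $B_n\subset\{|x-b_n|<|b_n|\}$ and $\int_{|x-b_n|<|b_n|}|x-b_n|^{-1}d\sigma_x=2\pi|b_n|$ to obtain a contribution $\le 9\pi/(2|b_n|)\le 9\pi/R$. Hence $J_n\le 13\pi/R$ for all $n$.

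Combining these estimates yields
$$\int_{|x|\ge R}|\hat H(x)|\,d\sigma_x\le\frac{13\pi}{R}\sum_{n\ge 0}|\alpha_n||b_n|^2\le\frac{13\pi}{R}\sum_{n\ge 0}|\alpha_n|(1+|b_n|^2)<\infty$$
by the standing hypothesis~(\ref{ab1}), which is exactly the integrability of $\hat H$ at infinity. The only delicate point is the uniform bound on $J_n$: the naive estimate, which bounds $|x-b_n|$ from below only in the range $|x|\gg|b_n|$ and otherwise just uses $|x|\ge R$, loses a factor $|b_n|$ and would need $\sum_n|\alpha_n||b_n|^3<\infty$, which is not available. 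The region decomposition above is precisely what lets the $|x|^{-2}$ factor absorb the singularity of $1/|x-b_n|$ (which, being located at $b_n$ with $|b_n|\asymp|x|$ there, contributes an extra $|b_n|$ that is cancelled by $|b_n|^{-2}$), restoring a uniform constant.
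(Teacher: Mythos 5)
Your proof is correct, and it follows the same overall strategy as the paper: bound $|\hat H|$ termwise and sum against $|\alpha_n|$ using the standing hypothesis~(\ref{ab1}). The difference lies in how the per-term integral is estimated. The paper keeps the three summands as they are, rescales $x\mapsto b_n x$ to reduce each term to the single model function $\frac{1}{1-x}+\frac{1}{x}+\frac{1}{x^2}=\frac{1}{x^2(1-x)}$, and bounds its integral over $\{|x|>r/|b_n|\}$ by a constant plus the logarithmically divergent piece $2\pi\ln(|b_n|/r)$ coming from the $|x|^{-2}$ singularity at the origin, arriving at a per-term bound $C|b_n|(1+|b_n|)$ that is uniform in the outer radius $R$ but does not decay in $R$. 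You instead collapse the three terms into $\frac{\alpha_n b_n^2}{x^2(b_n-x)}$ and estimate $J_n=\int_{|x|\ge R}|x|^{-2}|x-b_n|^{-1}\,d\sigma_x$ directly by splitting according to whether $|x-b_n|\ge |x|/2$, which yields the uniform bound $J_n\le 13\pi/R$ with explicit constants. Your route avoids the scaling and the logarithm and buys a quantitative conclusion slightly stronger than the lemma as stated, namely that $\int_{|x|\ge R}|\hat H|\,d\sigma_x=O(1/R)$; the paper's route is quicker in that it only needs integrability of one fixed model function. Both arguments use exactly the summability $\sum_n|\alpha_n|(1+|b_n|^2)<\infty$ of~(\ref{ab1}), and your appeal to~(\ref{ab2}) to justify the termwise regrouping, as well as the treatment of the degenerate case $b_n=0$ (where the collapsed term vanishes), are fine.
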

\begin{proof}
For every $n$, the function 
$1/(b_n-x)+1/x+b_n/x^2$ is integrable at infinity, and one can write,
for $r>0$:
$$\int_{|x|>r} \left|\frac{1}{b_n-x}+\frac{1}{x}+\frac{b_n}{x^2}\right|d\sigma_x
=|b_n|\int_{|x|>r/|b_n|} \left|\frac{1}{1-x}+\frac{1}{x}+\frac{1}{x^2}\right|d\sigma_x\le $$
$$|b_n|\left(C_1+\int_{2>|x|>r/|b_n|} \frac{1}{|x|^2}d\sigma_x\right)
\le |b_n|\left(C_2+2\pi \ln\frac{|b_n|}{r}\right)\le
C_3 |b_n|(1+|b_n|),$$
where the constants $C_i$ here and below depend only on $r$.
Now, for every $R$ big enough, and using the 
condition (\ref{ab1}), we have:
\begin{eqnarray*}
\int_{r<|x|<R}|\hat H(x)| d\sigma_x\le
\sum_{k=0}^\infty |\alpha_k|\int_{|x|>r}
\left|\frac{1}{b_k-x}+\frac{1}{x}+\frac{b_k}{x^2}\right|d\sigma_x\le
C_3\sum_{k=0}^\infty |\alpha_k||b_k|(1+|b_k|)\le C_4,
\end{eqnarray*}
where $C_4$ does not depend on $R$.
\end{proof}

We need a contraction property of the operator $T_f$.
This property is initially due to Thurston, see~\cite{dht}. 
The proof of the 
following claim is a minor variation of~\cite{dht},~\cite{e},
~\cite{mak},~\cite{mult}
\begin{lem}\label{contr}
Let $H$ be defined by the series~(\ref{H}), under the condition~(\ref{ab1}),
and the numbers $A, B$ are defined by~(\ref{ab2}).
Denote by $K$ the closure (on the Riemann sphere)
of the set $\{b_k\}$:
\begin{equation}\label{K}
K:=\overline{\{b_k: k\ge 0\}}\subset {\bf \bar C}.
\end{equation} 
Assume that $K$ has no interior points.
Let $f$ be a rational function with the asymptotics
at infinity $f(z)=\sigma z + b + O(1/z)$,
such that $H$ is a fixed point of the operator $T_f$
associated to $f$.

(1) If $A=B=0$, then 
either $H=0$ on the complement $K^c$ of $K$,
or $f$ is an exceptional map.

(2) If either $|\sigma|\ge 1$ and $b=0$, 
or $A=0$ and $\sigma=1$,
then $H=0$ on $K^c$, too.

\end{lem}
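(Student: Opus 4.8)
The plan is to follow the Thurston (pushforward) contraction scheme already used for Part~(b) of Theorem~\ref{one} and developed in \cite{dht},\cite{e},\cite{mak},\cite{mult}. The starting point is the elementary $L^1$ contraction of $T_f$: by the change of variables $d\sigma_x=|f'(w)|^2\,d\sigma_w$, for any $\psi\in L^1({\bf C},d\sigma)$ one has $\int|T_f\psi|\,d\sigma\le\int T_f|\psi|\,d\sigma=\int|\psi|\,d\sigma$, and if moreover $\psi=T_f\psi$ then equality holds throughout. This forces, for a.e.\ $x$, all the numbers $\psi(w)/f'(w)^2$, $w\in f^{-1}(x)$, to be positive multiples of one common complex number; equivalently, $H\,dz^2$ defines an $f$-invariant measurable line field on $\{H\neq 0\}$. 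Since the series~(\ref{H}) converges locally uniformly on $K^c$, the function $H$ is holomorphic there, and $K^c$ is a nonempty open set because $K$ has empty interior; hence $H\not\equiv 0$ on $K^c$ would make $\{H\neq 0\}$ a set of positive Lebesgue measure.

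First I would prove~(1). If $A=B=0$, then Lemma~\ref{int} shows that $H$ is integrable at infinity as well, so $H\in L^1({\bf \bar C})$ and $H\,dz^2$ is a genuine integrable $f$-invariant quadratic differential. If $H\not\equiv 0$ on $K^c$, the invariant line field on the positive-measure set $\{H\neq 0\}$ forces, by the classification of rational maps admitting a measurable invariant line field on a set of positive measure (equivalently, a nonzero invariant integrable quadratic differential), that $f$ is a flexible Latt\`es map, i.e.\ exceptional; this is exactly the Thurston rigidity obstruction. So either $H=0$ on $K^c$ or $f$ is exceptional. I would also record the more hands-on variant used on p.\ 190 of \cite{Le}: for a component $\Omega$ of $K^c$ one constructs a backward-invariant open $U$ with $f^{-1}(U)\subset U$ and $U\setminus f^{-1}(U)$ a nonempty open subset of $\Omega$; then $\int_U|H|=\int_U|T_fH|\le\int_U T_f|H|=\int_{f^{-1}(U)}|H|\le\int_U|H|$ forces $H=0$ on $U\setminus f^{-1}(U)$, hence $H\equiv 0$ on $\Omega$ by the identity theorem, Siegel disks and Herman rings being handled separately as in \cite{Le}; the only obstruction to building such a $U$ is precisely the exceptional case.

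For~(2) the difficulty is that $H$ is only $L^1_{\mathrm{loc}}$ on ${\bf C}$, carrying a mild non-integrable singularity at $\infty$ in the tail $A/x+B/x^2$; the regularization $\hat H=H+A/x+B/x^2$ is integrable at infinity by Lemma~\ref{int}. The idea is to run the contraction argument over bounded domains only. Since $f(z)=\sigma z+b+O(1/z)$, the fixed point $\infty$ has multiplier $1/\sigma$ in the coordinate $1/z$; under either hypothesis of~(2) — namely $|\sigma|\ge 1$ with $b=0$, so $\infty$ is attracting or neutral with a symmetric linearizable/petal structure, or $\sigma=1$ with $A=0$, so the $1/x$-term is absent — one shows the backward orbit of a small disk $V\subset\Omega$ stays at a positive distance from $\infty$, so that $U:=\bigcup_{n\ge 0}f^{-n}(V)$ is bounded and $H\in L^1(U)\cap L^1(f^{-1}(U))$. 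The displayed $L^1$-inequality then applies verbatim and gives $H\equiv 0$ on $\Omega$, hence $H=0$ on $K^c$. The role of the hypotheses can be made explicit by computing, via Lemma~\ref{formula} (resp.\ Proposition~\ref{param}), the rational functions $T_f(1/x)$ and $T_f(1/x^2)$ and checking that the defect $T_f\hat H-\hat H=A\bigl(T_f(1/x)-1/x\bigr)+B\bigl(T_f(1/x^2)-1/x^2\bigr)$ is controlled exactly when $b=0$, resp.\ when $\sigma=1$ and $A=0$.

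The main obstacle will be the behaviour at the fixed point $\infty$: one must rule out that the backward orbit of a component of $K^c$ accumulates at $\infty$, and reconcile the contraction with the slight failure of integrability of $H$ there — this is precisely where the hypotheses $b=0$, resp.\ $\sigma=1$ and $A=0$, are used. The secondary technical point, as always in this circle of ideas, is the treatment of components of $K^c$ that are Siegel disks or Herman rings, carried out as in \cite{Le} with the help of Lemma~\ref{int}, the rotation invariance of $H$ on such a component together with integrability of $\hat H$ at infinity forcing $H=0$ there as well.
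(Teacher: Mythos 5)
Your part (1) is essentially the paper's argument (contraction of $T_f$, equality in the triangle inequality a.e., an invariant line field where $H\neq 0$, and the Latt\`es dichotomy), but the final appeal is misstated: there is no ``classification of rational maps admitting a measurable invariant line field on a set of positive measure'' --- that is precisely the open invariant line field conjecture, and likewise a nonzero integrable fixed point of $T_f$ alone is not known to force a Latt\`es map. What makes the step legitimate here (and what the sources cited in the paper actually use) is the hypothesis that $K$ has empty interior: $H$ is holomorphic on the nonempty open set $K^c$, so if $H\not\equiv 0$ there, the invariant line field $\bar H/|H|$ is holomorphic on a nonempty open set, and the theorem that a rational map carrying such a line field is double covered by an integral torus endomorphism applies. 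You have this ingredient in hand, so this is a fixable slip rather than a structural flaw.

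The genuine gap is in part (2). Your plan is to run the contraction over $U=\bigcup_{n\ge 0}f^{-n}(V)$ for a small disk $V\subset\Omega$ and to claim that, under the hypotheses of (2), this backward orbit stays at positive distance from $\infty$, so that $U$ is bounded and $H\in L^1(U)$. That claim is unjustified and is false exactly in the case the lemma is applied to: when $\sigma=1$ the point $\infty$ is a parabolic fixed point of $f(z)=z+b+O(1/z)$, and the full preimages $f^{-n}(V)$ of a disk meeting a repelling petal accumulate at $\infty$; nothing in ``$b=0$'' or ``$A=0$ and $\sigma=1$'' prevents a component $\Omega$ of $K^c$ from meeting such a petal, because those hypotheses constrain the tail coefficients $A,B$ of $H$ and the expansion of $f$ at $\infty$, not the geometry of backward orbits. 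Since $H$ fails to be integrable near $\infty$ as soon as $A\neq 0$ or $B\neq 0$, the bounded-domain device cannot be rescued for such components. The paper's mechanism is different: it works with the disks $D(R)$ themselves, uses $|\sigma|\ge 1$ to get $f^{-1}(D(R))\subset D(R+|b|+a/R)$, writes $H=\hat H-A/x-B/x^2$ with $\hat H$ integrable at infinity (Lemma~\ref{int}), and checks that under either hypothesis of (2) the integral of $\bigl|A/x+B/x^2\bigr|$ over the thin annulus $R<|x|<R+|b|+a/R$ tends to $0$; this yields the key inequality~(\ref{limr2}), from which the a.e.\ equality~(\ref{tri}) and then the line-field argument of part (1) go through (the exceptional alternative disappears in case (2) because the fixed point at $\infty$ is then non-repelling, whereas flexible Latt\`es maps have only repelling cycles). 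Your closing remark about controlling the defect $T_f\hat H-\hat H$ gestures toward this, but as organized your proof of (2) rests on the unproved, and generally false, boundedness of $U$.
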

\begin{proof}
Note that $H$ is analytic in each component of $K^c$.
Now, take $R$ big enough and consider the disk $D(R)=\{|x|<R\}$. 
We claim that
\begin{equation}\label{limr2}
\lim_{R\to \infty} \left\{\int_{f^{-1}(D(R))} |H(x)|d\sigma_x-
\int_{D(R)} |H(x)|d\sigma_x\right\} \le 0.
\end{equation}
Indeed, in the case (1), this follows at once from
the integrability of $H$ at infinity.  
In the case (2),
the conditions on $\sigma$ imply that there is $a>0$, such that
\begin{equation}\label{f-1}
f^{-1}(D(R))\subset D(R+|b|+\frac{a}{R})
\end{equation}
(actually, $f^{-1}(D(R))\subset D(R)$, if $|\sigma|>1$).
On the other hand,
\begin{equation}\label{limr}
\lim_{R\to \infty} \int_{R<|x|<R+|b|+a/R} |H(x)|d\sigma_x=0.
\end{equation}
Indeed, by Lemma~\ref{int}, $H(x)=\hat H(x) - A/x - B/x^2$, where
$\hat H$ is integrable at infinity. In particular,
$\lim_{R\to \infty} \int_{R<|x|<R+|b|+a/R} |\hat H(x)|d\sigma_x=0$.
But an easy calculation shows that the conditions in the case (2)
guarantee that
\begin{equation}
\lim_{R\to \infty} \int_{R<|x|<R+|b|+a/R} \left|\frac{A}{x}+\frac{B}{x^2}\right|d\sigma_x=0,
\end{equation}
so (\ref{limr}) is proved. This, along with~(\ref{f-1}), gives us 
(\ref{limr2}) in the case (2).

Let us show that 
\begin{equation}\label{tri}
|T_fH(x)|=\sum_{w: f(w)=x}\frac{|H(w)|}{|f'(w)|^2}
\end{equation}
almost everywhere. Indeed, otherwise there is a 
set $A\subset D(R_0)$ of positive measure (for some $R_0$)
and $\delta>0$, such that 
$|T_fH(x)|<(1-\delta)\sum_{w: f(w)=x}\frac{|H(w)|}{|f'(w)|^2}$ for $x\in A$.
Then,
for all $R>R_0$,
\begin{eqnarray*}
\int_{D(R)} |H(x)|d\sigma_x=\int_{D(R)\setminus A} |H(x)|d\sigma_x
+\int_{A} |H(x)|d\sigma_x &=& \\
\int_{D(R)\setminus A} |T_fH(x)|d\sigma_x
+\int_{A} |T_fH(x)|d\sigma_x &<& \\
\int_{f^{-1}(D(R)\setminus A)} |H(x)|d\sigma_x+
(1-\delta)\int_{f^{-1}(A)} |H(x)|d\sigma_x &=& \\
\int_{f^{-1}(D(R))} |H(x)|d\sigma_x-
\delta \int_{f^{-1}(A)} |H(x)|d\sigma_x,
\end{eqnarray*}
which contradicts~(\ref{limr2}).
With~(\ref{tri}) holding almost everywhere, we proceed as
in the above cited papers.
\end{proof}
\begin{lem}\label{ccomp}
Let $\alpha_k, b_k, k\ge 0$ be two sequences of 
complex numbers, such that $\sum_{k\ge 0} |\alpha_k|<\infty$
and the closure $K$ on the Riemann sphere of the set
$\{b_k, k\ge 0\}$, where the points $b_k$
are pairwise different, is a C-compact $K$.
If $H(x)=\sum_{k\ge 0} \frac{\alpha_k}{b_k-x}$ is equal to $0$
outside of $K$, then $\alpha_k=0$ for every $k$.
\end{lem}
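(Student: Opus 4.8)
The plan is to read $H$ as (minus) the Cauchy transform of the finite complex measure $\mu=\sum_{k\ge 0}\alpha_k\,\delta_{b_k}$ carried by $K$, to use the vanishing of $H$ off $K$ to show that $\mu$ annihilates every function holomorphic near $K$, to upgrade this to annihilation of all of $C(K)$ via the $C$-compactness of $K$, and then to conclude $\mu=0$, i.e. $\alpha_k=0$ for all $k$. This is the plane version of the step in the proof of Theorem~\ref{one}(b) that deduces (\ref{mu0}) from $H\equiv 0$ off $J$; the differences are that the approximation property now comes straight from Definition~\ref{ccomdef} instead of from Vitushkin's theorem applied to the Julia set, and that the weaker hypothesis $\sum_k|\alpha_k|<\infty$ (rather than $\sum_k|\alpha_k|(1+|b_k|^2)<\infty$ as in Lemma~\ref{int}) is enough, since integrability of $H$ at infinity is not needed here.

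First I would reduce to the case $K\subset{\bf C}$. By Definition~\ref{ccomdef} there is a Mobius map $M$ with $M(K)$ a compact subset of the plane on which every continuous function is a uniform limit of functions holomorphic in a neighborhood of $M(K)$. Writing $M^{-1}(w)=\frac{aw+b}{cw+d}$ (with $ad-bc\ne 0$) and $\tilde b_k=M(b_k)$, the identity $M^{-1}(u)-M^{-1}(w)=\frac{(ad-bc)(u-w)}{(cu+d)(cw+d)}$ gives $H(M^{-1}(w))=\frac{cw+d}{ad-bc}\,\tilde H(w)$, where $\tilde H(w)=\sum_{k}\frac{\tilde\alpha_k}{\tilde b_k-w}$ and $\tilde\alpha_k=\alpha_k(c\tilde b_k+d)$. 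Since $M(K)$ is bounded, $\sum_k|\tilde\alpha_k|<\infty$; since the $b_k$ are finite and pairwise different and $M$ is injective, the $\tilde b_k$ are pairwise different and $\overline{\{\tilde b_k\}}=M(K)$; the factor $\frac{cw+d}{ad-bc}$ is nonvanishing off the single point $-d/c$, so $H\equiv 0$ off $K$ forces $\tilde H\equiv 0$ on ${\bf C}\setminus M(K)$; and $c\tilde b_k+d\ne 0$ (else $b_k=M^{-1}(-d/c)=\infty$), so $\tilde\alpha_k=0$ for all $k$ is equivalent to $\alpha_k=0$ for all $k$. Hence we may assume $K\subset{\bf C}$ is compact with the approximation property above.

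Now set $\mu=\sum_{k\ge 0}\alpha_k\,\delta_{b_k}$; this is a well-defined finite complex Borel measure supported in $K$ because $\sum_k|\alpha_k|<\infty$, and $H(x)=\int\frac{d\mu(z)}{z-x}$. The same summability makes the series for $H$ converge locally uniformly on ${\bf C}\setminus K$, so $H$ is holomorphic there, hence — by hypothesis — identically zero there. Given $r$ holomorphic on an open $U\supset K$, I would pick a cycle $\gamma$ in $U\setminus K$ with winding number $1$ about every point of $K$; then by Cauchy's formula and Fubini (the integrand $r(w)/(w-z)$ is continuous and bounded on the compact set $K\times\gamma$),
\[
\int r\,d\mu=\frac{1}{2\pi i}\int_\gamma r(w)\Big(\int\frac{d\mu(z)}{w-z}\Big)\,dw=-\frac{1}{2\pi i}\int_\gamma r(w)\,H(w)\,dw=0,
\]
since $H\equiv 0$ on $\gamma$. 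Thus $\mu$ annihilates every function holomorphic near $K$; by uniform approximation (the defining property of the $C$-compact $K$) and $|\mu|(K)<\infty$ it annihilates all of $C(K)$, so $\mu=0$ by the Riesz representation theorem. Evaluating on the singleton $\{b_k\}$ — legitimate since the $b_k$ are pairwise different — yields $\alpha_k=\mu(\{b_k\})=0$ for every $k$.

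The only slightly delicate point is the Mobius reduction in the second paragraph: carrying the Cauchy-type series to the plane while checking that absolute summability of the coefficients, distinctness of the nodes, and vanishing off the compact set all persist. Everything after that is routine complex analysis together with Riesz representation, so I do not expect a genuine obstacle.
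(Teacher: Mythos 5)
Your proof is correct and follows essentially the same route as the paper's: a Mobius change of variable reduces everything to a plane compact with the approximation property, after which the classical duality argument (Cauchy transform of the atomic measure $\mu=\sum_k\alpha_k\delta_{b_k}$, annihilation of functions holomorphic near the compact, then of all continuous functions by $C$-compactness, hence $\mu=0$) finishes the job — precisely the argument the paper invokes as classical, citing \cite{bsz} and its own proof of Theorem~\ref{one}(b). The only cosmetic difference is that you conjugate by the Mobius map furnished directly by Definition~\ref{ccomdef} (verifying the transformation law and absolute summability of the new coefficients), whereas the paper reduces the case $\infty\in K$ to the plane case by a translation followed by the inversion $y\mapsto 1/y$.
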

\begin{proof}
(1) The case when $K$ is a C-compact in the plane is classical,
see e.g.~\cite{bsz} (and also the proof of Theorem~\ref{one}).
(2) Now, assume that $\infty\in K$. Take $x_0\in {\bf C}\setminus K$.
Let $\epsilon>0$ be so that $|b_k-x_0|>\epsilon$ for every $k$.
Let $c_k=b_k-x_0$.
Then the function
$H_1(y)=\sum_{k\ge 0} \alpha_k/(c_k-y)$ is equal to $0$
outside of the compact $K_1=K-x_0$, which is also a C-compact,
but does not contain the origin.
By the definition, the compact $K_2=1/K_1=\{1/y: y\in K_1\}$
is a C-compact on the plane. 
Consider $H_2(z)=\sum_{k\ge 0} (\alpha_k/c_k)/(1/c_k-z)$.
Since $|c_k|>\epsilon$, still $\sum_{k\ge 0} |\alpha_k/c_k|<\infty$.
But, for every $z=1/y$ outside of $K_2$, so that $y$ is outside of $K_1$,   
$H_2(z)=-y H_1(y)=0$. Then we apply the case (1).
\end{proof}
\subsection{The operator identity for rational function.}
Since the proof of
Proposition~\ref{main} is formal, we get using Proposition~\ref{paramrat}:
\begin{prop}\label{mainrat}
Let $f\in \Lambda_{d, \bar p'}$.
Given $z\in {\bf C}$, we define $l(z)$ to be the minimal 
$l$, such that $f^l(z)=\infty$. If there is no such $l$, then $l(z)=\infty$.
We have:
\begin{equation}\label{oldrat}
\varphi_{z,\lambda}(x)-\lambda (T_f\varphi_{z,\lambda})(x)=\frac{1}{z-x}+
\lambda\sum_{k=1}^{p}\frac{1}{v_k-x}
\Phi_{k}(\lambda, z),
\end{equation}
where 
\begin{equation}\label{varphirat}
\varphi_{z,\lambda}(x)=\sum_{n=0}^{l(z)-1}\frac{\lambda^n}{(f^n)'(z)}
\frac{1}{f^n(z)-x},
\end{equation}
\begin{equation}\label{Psirat}
\Phi_{k}(\lambda, z)=\sum_{n=0}^{l(z)-1} \lambda^{n+1}
\frac{\frac{\partial f}{\partial v_k}(f^n(z))}{(f^{n+1})'(z)}.
\end{equation}
\end{prop}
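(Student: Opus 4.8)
The plan is to reproduce, almost verbatim, the purely formal computation that establishes Proposition~\ref{main}, with exactly two changes: Proposition~\ref{param} is replaced throughout by its rational counterpart Proposition~\ref{paramrat}, and every summation over $n$ is truncated at $l(z)-1$, which is precisely the range for which $f^{n}(z)$ is still a point of the plane. Since the whole statement is an equality of formal power series in $\lambda$, no convergence is involved and the proof is pure bookkeeping.

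Concretely, for $0\le n\le l(z)-1$ the point $f^{n}(z)$ is finite, so $\frac{1}{f^{n}(z)-\,\cdot\,}$ is a legitimate argument of $T_f$, and Proposition~\ref{paramrat}, applied with $f^{n}(z)$ in place of $z$, gives
\[
T_f\frac{1}{f^{n}(z)-x}=\frac{1}{f'(f^{n}(z))}\frac{1}{f^{n+1}(z)-x}+\sum_{k=1}^{p}\frac{\frac{\partial f}{\partial v_k}(f^{n}(z))}{f'(f^{n}(z))}\frac{1}{x-v_k}.
\]
Multiplying by $\lambda^{n+1}/(f^{n})'(z)$, using the chain rule $(f^{n+1})'(z)=(f^{n})'(z)\,f'(f^{n}(z))$, and summing over $n=0,\dots,l(z)-1$, the contributions of the first term on the right telescope and reassemble into $\varphi_{z,\lambda}(x)-\frac{1}{z-x}$; the one ``missing'' top contribution, indexed $n=l(z)$, is zero because $f^{l(z)}(z)=\infty$. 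The contributions of the second term collect, for each $k$, into $\Phi_k(\lambda,z)$ of~(\ref{Psirat}) multiplying $1/(v_k-x)$. Transposing $\lambda\,T_f\varphi_{z,\lambda}$ to the left then yields the asserted identity, with $\varphi_{z,\lambda}$ and $\Phi_k$ as displayed. The degenerate endpoints are immediate: if $z=\infty$ then $l(z)=0$, both series are empty and both sides vanish; if $l(z)=\infty$ the computation is literally the one of Proposition~\ref{main}.

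The only point that is not completely mechanical — and the one I would single out as the main, though mild, obstacle — is the top application of Proposition~\ref{paramrat}, at $n=l(z)-1$, where $f^{l(z)-1}(z)$ is necessarily a (finite) pole of $f$ and $f^{l(z)}(z)=\infty$. One has to check that the identity survives this collision with $\infty$: that the $\frac{1}{f^{l(z)}(z)-x}$ contribution genuinely disappears, and that the numbers $\frac{\partial f}{\partial v_k}(f^{l(z)-1}(z))/(f^{l(z)})'(z)$ occurring in $\Phi_k$ stay finite. Both hold because $\frac{\partial f}{\partial v_k}/f'$ is regular at every pole of $f$ (an easy consequence of the form $\tilde P/Q^{2}$ and the vanishing conditions in Proposition~\ref{derlemrat}(a)): the factor $1/f'(f^{l(z)-1}(z))=0$ kills the first contribution, while the relevant ratios remain bounded. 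When that pole has order $\ge 2$ — i.e.\ it is one of the critical points $c_j$ with $v_j=\infty$ — Proposition~\ref{paramrat} is to be read as the corresponding limit; alternatively, after the Möbius normalization of Definition~\ref{Mspace} (the setting in which Proposition~\ref{mainrat} is actually applied) the critical orbits, hence this borderline case, are avoided altogether, and then the identity is exactly that of Proposition~\ref{main}.
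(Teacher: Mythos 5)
Your proof is correct and follows exactly the paper's route: the paper itself establishes Proposition~\ref{mainrat} by observing that the computation proving Proposition~\ref{main} is purely formal and by substituting Proposition~\ref{paramrat} for Proposition~\ref{param}, with all sums truncated at $l(z)-1$. Your extra verification of the borderline index $n=l(z)-1$ (where the orbit hits a pole, and where $\frac{\partial f}{\partial v_k}/f'$ stays regular by Proposition~\ref{derlemrat}(a)) is a faithful elaboration of details the paper leaves implicit, not a different argument.
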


Putting in Proposition~\ref{mainrat} $\lambda=1$ and $z=v_j$ and
combining it with the definition of $L(.,.)$ from Theorem~\ref{onerata}, we get:
\begin{prop}\label{vaznoerat}
Let $c_j$, $1\le j\le p$, be a summable critical point of $f\in \Lambda_{d, \bar p'}$,
such that $v_j\not=\infty$.
Then
\begin{equation}\label{vazeqrat}
H_j(x)-(T_f H_j)(x)=\sum_{k=1}^{p}\frac{L(c_j, v_k)}{v_k-x},
\end{equation}
where 
\begin{equation}\label{hjrat}
H_j(x)=\sum_{n=0}^{l(v_j)-1}\frac{1}{(f^n)'(v_j)(f^n(v_j)-x)},
\end{equation}
\end{prop}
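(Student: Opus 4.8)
The plan is to read Proposition~\ref{vaznoerat} directly off the operator identity~(\ref{oldrat}) of Proposition~\ref{mainrat} via the substitution $\lambda=1$, $z=v_j$, exactly as announced in the sentence preceding the statement. First I would compare the defining formula~(\ref{varphirat}) of $\varphi_{z,\lambda}$ with~(\ref{hjrat}): at $z=v_j$ and $\lambda=1$ the sum in~(\ref{varphirat}) is, term by term, the sum in~(\ref{hjrat}), so $\varphi_{v_j,1}(x)=H_j(x)$ and the left-hand side of~(\ref{oldrat}) becomes exactly $H_j(x)-(T_fH_j)(x)$. On the right-hand side I would write $\frac{1}{v_j-x}=\sum_{k=1}^{p}\frac{\delta_{j,k}}{v_k-x}$ (the index $j$ lies in $\{1,\dots,p\}$, so this merely isolates the $k=j$ term and is legitimate even if several $v_k$ coincide), which turns~(\ref{oldrat}) into
\[ H_j(x)-(T_fH_j)(x)=\sum_{k=1}^{p}\frac{\delta_{j,k}+\Phi_k(1,v_j)}{v_k-x}. \]

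It then remains to recognize $\delta_{j,k}+\Phi_k(1,v_j)$ as $L(c_j,v_k)$. Shifting the summation index $n\mapsto n+1$ in~(\ref{Psirat}) gives $\Phi_k(1,v_j)=\sum_{n=1}^{l(v_j)}\frac{(\partial f/\partial v_k)(f^{n-1}(v_j))}{(f^n)'(v_j)}$. If the forward orbit of $v_j$ never reaches infinity, then $l(v_j)=\infty$ and this is precisely the series that is added to $\delta_{j,k}$ in~(\ref{limrata}) (case (1) of Theorem~\ref{onerata}); if $f^l(v_j)=\infty$ for a minimal $l\ge1$, then $l(v_j)=l$ and the finite sum is precisely the one in~(\ref{limratinf}) (case (2) of Theorem~\ref{onerata}). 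In both cases $\delta_{j,k}+\Phi_k(1,v_j)=L(c_j,v_k)$, and feeding this back into the displayed identity gives~(\ref{vazeqrat}).

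So the argument is essentially bookkeeping; the only point that is not purely formal — and the step I expect to need the most care — is the passage from the formal-power-series identity~(\ref{oldrat}) to a genuine equality of meromorphic functions once $\lambda=1$. When $l(v_j)<\infty$ all sums involved are finite (and summability of $c_j$ already guarantees that every coefficient occurring, in particular at the top index where $f^{l(v_j)-1}(v_j)$ is a pole of $f$, is finite), so nothing has to be checked. When $l(v_j)=\infty$, I would invoke the summability of $c_j$: by the polynomial-type bound~(\ref{bddrat}) on $\partial f/\partial v_k$ combined with~(\ref{summ}) — the same estimate already used in~(\ref{aaarat}) and in the proof of Theorem~\ref{one}(a) — the series defining $H_j(x)$ and $\Phi_k(1,v_j)$ converge absolutely and locally uniformly in $x$ off the closure of the forward orbit of $v_j$, and the double series obtained by expanding $T_fH_j$ termwise also converges absolutely. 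This justifies interchanging $T_f$ with the summation, so that~(\ref{oldrat}) remains valid verbatim at $\lambda=1$, completing the proof.
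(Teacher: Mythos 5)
Your proposal is correct and follows the paper's own route: the paper proves Proposition~\ref{vaznoerat} precisely by setting $\lambda=1$, $z=v_j$ in Proposition~\ref{mainrat} and identifying $\delta_{j,k}+\Phi_k(1,v_j)$ with $L(c_j,v_k)$ via (\ref{limrata}) and (\ref{limratinf}) of Theorem~\ref{onerata}. Your added remarks on absolute convergence (via (\ref{bddrat}) and the summability condition) in the case $l(v_j)=\infty$ only make explicit what the paper leaves implicit, so there is no substantive difference.
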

\begin{com}
Note that the sum in~(\ref{vazeqrat}) is over the critical values
which are different from infinity.
In particular, $l(v_j)\ge 1$ for $1\le j\le p$.
\end{com}
\subsection{Proof of Part (b) of Theorem~\ref{onerat}}
By Corollary~\ref{lmtol}, it is enough to show that the rank of the
matrix ${\bf L}$ is equal to $\nu$.
Now, we follow closely the proof of Theorem 6 of~\cite{mult}.
Suppose first we are in the 
case ${\bf H_\infty}$. 
Assume the rank of the matrix ${\bf L}$ 
is less than $\nu$.
Denote by $S$ the set of indexes $j_1,...,j_{\nu}$.
Then the vectors 
\begin{equation}\label{vh}
(L(c_j, \sigma), L(c_j, v_1), L(c_j, v_{2}),..., L(c_j, v_{p-1})), \ \ \
j\in S,
\end{equation}
are linearly dependent: there exist $\nu$ numbers $a_j$, $j\in S$,
not all zeros, such that:
\begin{equation}\label{deprat}
\sum_{j\in S} a_j L(c_j, \sigma)=0, \ \ \ \sum_{j\in S} a_j L(c_j, v_k)=0,\ 1\le k\le p-1.
\end{equation}
On the other hand, for every $j\in S$,
\begin{equation}\label{vazeq1}
H_j(x)-(T_f H_j)(x)=\sum_{k=1}^{p}\frac{L(c_j, v_k)}{v_k-x},
\end{equation}
where 
\begin{equation}\label{hj1}
H_j(x)=\sum_{n=0}^{l(v_j)-1}\frac{1}{(f^n)'(v_j)(f^n(v_j)-x)},
\end{equation}
By~(\ref{deprat}), it gives us, that 
the linear
combination $H=\sum_{j\in S} a_j H_j$ satifies the relation:
\begin{equation}\label{rl}
H(z)-(T_f H)(z)=\frac{L}{v_p-z}.
\end{equation}
The function $H$ is reduced to the form
\begin{equation}\label{hformrat}
H(z)=\sum_{k=0}^\infty \frac{\alpha_k}{b_k-z}, \ \ \ b_k\not=b_j, 
\ \ k\not=j,
\end{equation}
and since each $c_j$, $j\in S$, is summable,
the sequences $\alpha_k$, $b_k$ satisfy the condition
(\ref{ab1}).
Recall the notations
$A=\sum_{k\ge 0}\alpha_k$,
$B=\sum_{k\ge 0} \alpha_k b_k$, and
the regularization $\hat H$ of $H$ is:
$$\hat H(z)=H(z)+\frac{A}{z}+\frac{B}{z^2}.$$
We use the following asymptotics, as $z\to \infty$,
which are easily checked
(see e.g. the proof of Lemma 8.2 of~\cite{mult}):
\begin{equation}\label{ass}
T_f\frac{1}{z}=\frac{1}{\sigma z}+\frac{b}{\sigma z^2}
+O(\frac{1}{z^3}), \ \ T_f\frac{1}{z^2}=\frac{1}{z^2}+ O(\frac{1}{z^3}).
\end{equation}
Now, we can rewrite~(\ref{rl}) as follows:
$$\hat H(z)-(T_f\hat H)(z)=\frac{A}{z}-
A(\frac{1}{\sigma z}+\frac{b}{\sigma z^2})
-\frac{L}{z}-\frac{L v_p}{z^2}+O(\frac{1}{z^3}),$$
or
\begin{equation}\label{h-th}
\hat H(z)-(T_f\hat H)(z)=\frac{A(1-1/\sigma)-L}{z}+
\frac{-A b/\sigma-L v_p}{z^2}+O(\frac{1}{z^3}).
\end{equation}
But the function $\hat H$ is integrable at infinity, hence, so is
$\hat H-T_f\hat H$, and we can write:
$$I_R:=\int_{|z|>R}|\hat H(z)-(T_f\hat H)(z)|d\sigma_z\le
\int_{|z|>R}|\hat H(z)| d\sigma_z+
\int_{f^{-1}({|z|>R})}|\hat H(z)| d\sigma_z.$$
It implies that $I_R\to 0$ as $R\to \infty$.
But then $\hat H(z)-T_f\hat H(z)=O(\frac{1}{z^3})$ at infinity. Hence,
~(\ref{h-th}) implies:
\begin{equation}\label{rlm}
A(1-\frac{1}{\sigma})-L=0, \ \ \ \ -\frac{A b}{\sigma}-L v_p=0.
\end{equation}
Since we are in the case ${\bf H_\infty}$, then $b=0$ and
$v_p=1$. Hence, by the second relation of~(\ref{rlm}), $L=0$.
That is, by~(\ref{rl}), $H$ is a fixed point
of $T_f$. Furthermore, by another condition of
the case ${\bf H_\infty}$, $\sigma\not=1$, which, along
with $L=0$ and the first relation of~(\ref{rlm}), imply that $A=0$.
Now we use that $\sum_{j\in S} a_j L(c_j, \sigma)=0$.
By assertions~(\ref{limsigmaa}),~(\ref{limsigmainf}) of 
Theorem~\ref{onerata},
\begin{equation}\label{b}
B=\sum_{k\ge 0} \alpha_k b_k=\sum_{j\in S}
a_j \sum_{n\ge 0} \frac{f^n(v_j)}{(f^n)'(v_j)}=
\sigma \sum_{j\in S}a_j L(c_j, \sigma)=0.
\end{equation}
Thus $A=B=0$, hence, the regularization $\hat H$ of $H$ takes the form:
$$\hat H(z)=H(z)+\frac{A}{z}+\frac{B}{z^2}=H(z),$$
i.e., $H$ is an integrable (on the plane)
fixed point of $T$. By Lemma~\ref{contr} (1), either $H(z)=0$
for every $z$ outside of the set 
$K$, or $f$ is an exceptional rational function. The latter is
excluded, hence, the former holds. By Lemma~\ref{ccomp}
and the definition of $H$, 
we get that all $\alpha_k$ in~(\ref{hformrat}) are zeros. 
In other words, $\sum_{j\in S} a_j H_j=0$.
Now we finish the proof as in the proof of Theorem~\ref{one} (b),
Subsection~\ref{oneb}. 
Note that Corollary~\ref{lemravnie} remains true
for rational functions
(this follows directly from~(\ref{limrata}) and~(\ref{limratinf})
of Theorem~\ref{onerata}).
It allows us to repeat the proof of Lemma~\ref{stranno}
(with obvious changes)
to conclude that all $a_j$, $j\in S$, are zeros.
On the other hand, at least one $a_j$ is not zero, which is
a contradiction.

Remaining cases are quite similar.

($NN_\infty$). 
The relations~(\ref{rlm}) hold. Since $\sigma=1$, then 
the first one gives us $L=0$, and since $b\not=0$,
the second relation gives $A=0$. Besides, (\ref{b}) also holds.
Then we apply Lemma~\ref{contr} (2) and end the proof as in the first case.

($ND_\infty$), i.e. $\sigma=1$ and $b=0$.
Now, assuming the contrary, we get a non-trivial
linear combination $H=\sum_{j\in S} a_j H_j$, such that 
\begin{equation}\label{rln2}
H(z)-(T_fH)(z)=\frac{L_{p-1}}{v_{p-1}-z}+\frac{L_{p}}{v_{p}-z}.
\end{equation}
Then 
$$\hat H(z)-(T_f\hat H)(z)=\frac{A}{z}-
A(\frac{1}{\sigma z}+\frac{b}{\sigma z^2})
-\frac{L_{p-1}+L_p}{z}-\frac{L_{p-1}v_{p-1}+L_p v_p}{z^2}+O(\frac{1}{z^3}),$$
and since the function $\hat H$ is integrable at infinity,
this implies that
\begin{equation}\label{rln3}
A(1-\frac{1}{\sigma})-(L_{p-1}+L_p)=0, \ \ \ \ 
-\frac{A b}{\sigma}-(L_{p-1}v_{p-1}+L_p v_p)=0.
\end{equation}
In the considered case, 
$\sigma=1$, $b=0$, and $v_{p-1}=1\not=0=v_p$, hence, by~(\ref{rln3}),
$L_{p-1}=L_p=0$. Thus $H$ is a fixed point of $T_f$.
We apply Lemma~\ref{contr} (2)
and end the proof as in the first case.
\begin{com}\label{last}
The main results of the paper - Theorem~\ref{one} and Theorem~\ref{onerat} -
can be extended to also include non-repelling periodic orbits.
Consider the case of rational functions 
(the polynomial case is very similar and 
simpler). Assume that $f\in \Lambda_{d, \bar p'}$ has $r$ summable
critical points $c_j$, for $j\in S$, 
and also $r_a$ non-repelling periodic orbits
$O_j$ (different from infinity), for $j\in S_a$,
where $S$ and $S_a$ are two disjoint subsets of $\{1,2,...,p'\}$. 
We assume that the multiplier $\rho_j$ of each $O_j$
is different from $1$,
and, if $\rho_j=0$, $O_j$ contains only a single and simple critical point.
Assume for simplicity that the orbit of 
each $c_j$, $j\in S$, lies in the plane, and all critical values
of $f$ lie in the plane, too. Then the matrix ${\bf L^M}$ in
Theorem~\ref{onerat} is, in fact, the matrix 
${\bf L}$ introduced in Corollary~\ref{lmtol}.
Let us extend the matrix ${\bf L}$ by the following $r_a$ rows.
In the case ($H_\infty$), extend it by
$$(\frac{\partial \rho_j}{\partial \sigma}, 
\frac{\partial \rho_j}{\partial v_1},..., 
\frac{\partial \rho_j}{\partial v_{p-1}})_{j\in S_a},$$
in the case ($NN_\infty$), by
$$(\frac{\partial \rho_j}{\partial v_1},...,
\frac{\partial \rho_j}{\partial v_{p-1}})_{j\in S_a},$$
and in the case ($ND_\infty$), by
$$(\frac{\partial \rho_j}{\partial v_1},...,
\frac{\partial \rho_j}{\partial v_{p-2}})_{j\in S_a}.$$
Then the rank of the extended matrix is maximal, i.e.,
equal to $r+r_a$.

In the proof, we consider the system of $r+r_a$ relations as follows.
For every $j\in S$, we have a relation of Proposition~\ref{vaznoerat}
of the present paper:
\begin{equation}\label{last1}
H_j(x)-(T_f H_j)(x)=\sum_{k=1}^{p}\frac{L(c_j, v_k)}{v_k-x},
\end{equation}
and for every $j\in S_a$, we have a similar relation, see
Theorem 5 of~\cite{mult}:
\begin{equation}\label{last2}
B_j(x)-(T_f B_j)(x)=\sum_{k=1}^{p}\frac{\partial \rho_j}{\partial
v_k}\frac{1}{z-v_k},
\end{equation}
where 
$B_j(z)=\sum_{b\in O_j} (\frac{\rho_j}{(z-b)^2}+\frac{1}{1-\rho_j}
\frac{(f^n)''(b)}{z-b})$.
Then we proceed precisely as 
in the proof of Theorem 6 of~\cite{mult} (or
the proof of Theorem~\ref{onerat} (b) of the present paper).
\end{com}

\end{document}